\pgfplotsset{compat=newest}
\newtheorem{theorem}{Theorem}[section]
\newtheorem{lemma}[theorem]{Lemma}
\newtheorem{example}{Example}
\newtheorem{remark}{Remark}[section]
\numberwithin{equation}{section} 
\crefname{section}{Section}{Sections}
\crefname{subsection}{Section}{Sections}
\crefname{subsubsection}{Section}{Sections}
\crefname{example}{Example}{Examples}
\newcommand{\bm}[1]{\boldsymbol{#1}} 
\newcommand{\enorm}[1]{{\left\vert\kern-0.25ex\left\vert\kern-0.25ex\left\vert #1 
    \right\vert\kern-0.25ex\right\vert\kern-0.25ex\right\vert}}
 \newcommand{\vertiii}[1]{{\left\vert\kern-0.25ex\left\vert\kern-0.25ex\left\vert #1 
    \right\vert\kern-0.25ex\right\vert\kern-0.25ex\right\vert}}%
\newcommand{\mesh}{\mathcal{T}_h}
\newcommand{\DG}{\mathrm{DG}}
\newcommand{\units}[2]{\ensuremath{#1\,\mathrm{#2}}}
\title[DG for the EMI model]{
A Discontinuous Galerkin Method for the Extracellular Membrane Intracellular Model}
\author{Rami Masri$^1{}^{,2}$, Keegan L. A. Kirk$^3$, Eirill Hauge$^1$, and Miroslav Kuchta$^1$}
\email{rami\_masri@brown.edu, kkirk6@gmu.edu, eirill@simula.no,  miroslav@simula.no}
\address{$^1$Department of Numerical Analysis and Scientific Computing, Simula Research Laboratory, Oslo, 0164 Norway. RM and EH gratefully acknowledge support from the Research Council of Norway (RCN) via FRIPRO grant agreement 324239 (EMIx).
MK gratefully acknowledges support from the RCN grant 303362.
}
\address{$^2$Division of Applied Mathematics, Brown University, Providence, RI 02906.}
\address{$^3$Department of Mathematical Sciences and Center for Mathematics and Artificial Intelligence, George Mason University, Fairfax, VA 22030, USA. KK gratefully acknowledges support from the Natural Sciences and Engineering Research Council of Canada (NSERC) via PDF-568008.}
\date{ \today }
\begin{document}
\begin{abstract}
We formulate and analyze interior penalty discontinuous Galerkin methods for coupled elliptic PDEs modeling excitable tissue, represented by intracellular and extracellular domains sharing a common interface. The PDEs are coupled through a dynamic boundary condition, posed on the interface, that relates the normal gradients of the solutions to the time derivative of their jump. This system is referred to as the Extracellular Membrane Intracellular model or the cell-by-cell model. Due to the dynamic nature of the interface condition and to the presence of corner singularities, the analysis of discontinuous Galerkin methods is non-standard. We prove the existence and uniqueness of solutions by a reformulation of the problem to one posed on the membrane. Convergence is shown by utilizing face-to-element lifting operators and notions of weak consistency suitable for solutions with low spatial regularity. Further, we present parameter-robust preconditioned iterative solvers. Numerical examples in idealized geometries demonstrate our theoretical findings, and simulations in multiple cells portray the robustness of the method.

 \vspace{1em}
 \smallskip
  \noindent \textit{Key words}. Electrophysiology, EMI model, dynamic boundary condition, interior penalty discontinuous Galerkin methods, low spatial regularity.  
  
  \smallskip 
  \noindent \textit{MSC codes.} 65N30, 65M60.
\end{abstract}

\newtheorem{thm}{Theorem}[section]

\maketitle

\section{Introduction}\label{sec:intro}

Electrophysiology models of excitable cells (such as cardiac cells or neurons) clasically consisted of the monodomain or bidomain equations ~\cite[Ch 2.]{sundnes2007computing}, ~\cite{franzone2002degenerate}.
  These equations 
 homogenize the tissue and assume coexistence of the extracellular, intracellular spaces and the cell membranes in every spatial point. In contrast, the geometry of each of these three domains is explicitly represented in the  Extracellular Membrane Intracellular (EMI) system. In particular, this system models the electric potentials in the cell and its surrounding along with the trans--membrane potential. 

Consequently, the EMI-type models can naturally incorporate highly detailed reconstructions of biological tissues, e.g. ~\cite{schlegel2024whole}, and numerical simulations may be used to shed light on previously unobservable physiological phenomena. 
For example,
simulations of neuronal networks often exclusively model the membrane of the cells, connected through synapses. However, network behaviour may also be influenced by indirect cell communication through the extracellular space~\cite{anastassiouephaptic2011}. This \textit{ephaptic coupling} can be captured using EMI-type models.
We refer to the book~\cite{tveito2021modeling} for more details, and to~\cite{amar2005existence,veneroni2006reaction} for the analysis of the EMI system. 

The EMI model is commonly discretized by the $H^1$-conforming continuous Galerkin (CG) elements,
see e.g.~\cite{agudelo2013computationally, tveito2017cell, huynh2023convergence}. The analysis of the
CG formulation has been established in~\cite{fokoue2023numerical}.  Recently, there has been a
growing interest in using discontinuous Galerkin (DG) methods  
for the monodomain and bidomain equations~\cite{botta2024high},
(Poisson)-Nernst-Planck equations modeling ionic transport~\cite{liu2017free, liu2022positivity, roy2023scalable} and the related EMI models of electrodiffusion ~\cite{ellingsrud2024splitting}.

We note that the EMI equations are closely related to the elliptic interface models of porous media flow in domains containing 
blocking fractures/barriers. However, compared to EMI,  the latter models consist of adjacent domains and do not have the dynamic boundary condition on the interface.   For these elliptic interface models,  DG methods have been analyzed,  for example in ~\cite{huang2020high, huynh2013high} under the assumption of smooth solutions and in 
~\cite{cangiani2018adaptive} under minimal regularity assumptions where the authors leverage the so-called medius analysis introduced in \cite{Gudi2010ANE}. Here, we adopt a completely different approach following~\cite{ern2021quasi} to show convergence for solutions with $H^{1+s}$ spatial regularity for $0 < s < 1/2$.

Our motivation for considering the nonconforming discretizations is threefold. First, DG 
schemes provide better local (element-wise) conservation properties. Second, implementation of
CG for EMI requires that the finite element (FE) framework supports multimesh (mixed-dimensional)
features, which might not be readily available. As an example, in the popular open-source
finite element library FEniCS, preliminary support for such features has only recently been added~\cite{carry2021abstractions}.
Here, we argue that discretization by DG allows the EMI model to be
implemented in any FE code that can handle standard discontinuous Galerkin methods. 
Finally, the design of solvers for the linear systems arising from CG discretization
is still an active area of research~\cite{huynh2023convergence, jaeger2021efficient, tveito2021modeling, budivsa2024algebraic}.
In particular,~\cite[Ch 6.]{tveito2021modeling} and ~\cite{budivsa2024algebraic} show that black-box
multigrid solvers can fail to provide order-optimal solvers for certain parameter regimes.
To this end, we shall below explore multigrid methods with DG.

\textbf{Main contributions}. We are mainly concerned with the numerical analysis of the interior penalty DG method formulated for the EMI equations. In particular, the main challenges and contributions are summarized below. 
\begin{itemize}
    \item  We show existence and uniqueness of DG semi-discrete solutions in \Cref{sec:semi_discrete_posed}. Since the time-dependent term appears as an interface condition on the membrane, a reformulation of the model is required and is attained by introducing suitable lifting operators. The main result is in \Cref{thm:exist_unique}. 
    The stability of the semi-discrete DG solution is established in \Cref{sec:stability}.
    \item We prove convergence of the semi-discrete solution under low spatial regularity assumptions where we work with $H^{1+s}$ for $s \in (0,1/2)$ functions with Laplacians in $L^2(\Omega)$. This requires a delicate analysis relying on face-to-element lifting operators and weak notions of consistency established in~\cite{ern2021quasi}. This analysis is presented in \Cref{sec:convergence} and the main result is \Cref{thm:convergence}.
Further,  we present and analyze a backward Euler DG discretization, see \Cref{sec:fullydiscrete}.
    \item Finally, parameter robust preconditioners with respect to both the time step and the length of the domain are proposed. The convergence properties and the robustness of our solver are demonstrated in several examples in \Cref{sec:numerics}, including a physiologically relevant 3D simulation of 15 cardiac cells. 
\end{itemize}

\section{Problem statement} For $d=2, 3$, we let $\Omega \subset\mathbb{R}^d$
be a bounded polygonal domain which contains a subdomain $\Omega_i \subset\mathbb{R}^d$. 
We will refer to $\Omega_e = \Omega \backslash \Omega_i$ as the exterior or extracellular domain while $\Omega_i$ is termed the intracellular domain or cell. Further, let $\Gamma =\partial\Omega_e\cap\partial\Omega_i$
 be the common interface between the
domains, and let $\Gamma_e =\partial\Omega_e \setminus\Gamma$
be the outer boundary of the extracellular domain. See \Cref{fig:illustrate} for an illustration of the geometry. 
  \begin{figure}[H]
    \begin{overpic}[height=0.225\textwidth]{./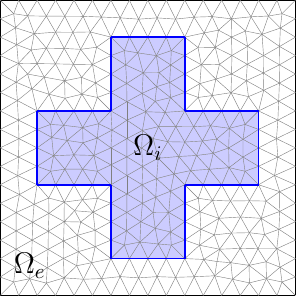}
    \put(65,20){$\Gamma$}
    \put(105,85){$\Gamma_e$}
\end{overpic}
\caption{Intracellular domain $\Omega_i$ (or cell) in blue surrounded by the extracellular domain $\Omega_e$ with interface $\Gamma$ and $\Omega = \Omega_i \cup \Omega_e$. The outer boundary of $\Omega_e$ is denoted by $\Gamma_e$.}
\label{fig:illustrate}
\end{figure}
We consider the EMI model~\cite{franzone2002degenerate,tveito2021modeling} modeling the intracellular and extracellular potentials:
\begin{equation}\label{eq:EMI}
  \begin{alignedat}{2}
    -\nabla\cdot(\kappa_e\nabla u_e) &= f_e   && \quad \text{ in }\Omega_e \times (0,T),  \\
    -\nabla\cdot(\kappa_i\nabla u_i) &= f_i   && \quad \text{ in }\Omega_i\times (0,T) ,  \\
    \kappa_i\nabla u_i\cdot \bm n   &= \kappa_e \nabla u_e\cdot \bm n   && \quad \text{ on } \Gamma\times (0,T),   \\
  C_M \frac{\partial [u]}{\partial t} + f_\Gamma ([u]) & =  -\kappa_i \nabla u_i \cdot \bm n  && \quad  \text{ on }  \Gamma \times (0,T),  \\
    \nabla u_e \cdot \bm n_e  & = 0 &&  \quad \text{ on } \Gamma_e \times (0,T),   \\ 
    [u](0) & = \hat u^0 && \quad \text{ on } \Gamma.  
  \end{alignedat}
\end{equation}
Here,  $\bm n  = \bm n_i $ and $\bm n_e$ are the unit normal vectors to $\partial \Omega_i $ and $\partial \Omega_e$ respectively. The jump of $u$ is defined as $[u] = u_{\vert_{ \Omega_i}} - u_{\vert_{\Omega_e}}$. We assume that stimulation currents $f_e \in L^2(0,T;L^2(\Omega_e))$,  $f_i \in L^2(0,T;L^2(\Omega_i))$ are given.  We also assume that  $\hat u^0 \in H^{1/2}(\Gamma)$ and that the conductivities $\kappa_i \in L^\infty(\Omega_i, \mathbb{R}^{d,d})$ and $\kappa_e \in L^\infty(\Omega_e, \mathbb{R}^{d,d})$ are uniformly symmetric positive definite matrices in $\Omega_i $ and $\Omega_e$ respectively. The constant $C_M > 0$ represents the capacitance of the membrane.

Note that the compatibility condition $\int_{\Omega} f = 0$ where $f = f_i$ in $\Omega_i$ and $f = f_e$ in $\Omega_e$ is required due to the Neumann condition on $\Gamma_e\times (0, T)$. This boundary condition is natural in the absence of grounding of the potential $u_e$. 
We further assume that $f_{\Gamma} \in W^{1,\infty}(\mathbb R)$ is given and that $f_{\Gamma}([u])$ models the ionic current through the membrane $\Gamma$. Typically, model \eqref{eq:EMI} is coupled with a system of ordinary differential equations modeling ionic channels. The coupling is through the membrane where  $f_{\Gamma}$ may depend on the gating variables for ion channels. For simplicity, we do not consider these models in the analysis.

\subsection{Weak formulation} In this section, we introduce the weak formulation for the model in \eqref{eq:EMI} and we follow the exposition in~\cite{fokoue2023numerical, veneroni2006reaction}. Define the broken space  \begin{equation}
    W = \left \{ u \in  L^2(\Omega): \,\,\,  u \vert_{\Omega_i} \in H^1(\Omega_i), \,\, u \vert_{\Omega_e} \in H^1(\Omega_e), \,\,\, \int_{\Omega_e} u  = 0 \right \}.  \label{eq:def_W}
\end{equation}
This space is equipped with the norm 
\begin{equation}
    \|u\|_{W}^2 = \|u\|^2_{H^{1}(\Omega_i)} + \|u\|^2_{H^1(\Omega_e)}, \quad \forall u \in W. 
\end{equation}
Testing \eqref{eq:EMI} with $v \in W$ and integrating by parts, we obtain the following weak formulation. Find $u \in L^2(0,T;W)$ with $[u] \in L^2(0,T; H^{1/2}(\Gamma)) \cap H^1(0,T;L^2(\Gamma))$ such that the following holds for all $v \in W$ for a.e. in time: 
\begin{equation}
\label{eq:EMI_weak} 
\int_{\Gamma}\left( C_M \frac{\partial [u]}{\partial t} + f_{\Gamma}([u]) \right)[v] + \int_{\Omega_e} \kappa_e \nabla u_e \cdot \nabla v + \int_{\Omega_i} \kappa_i \nabla u_i \cdot \nabla v   = \int_{\Omega} f v.    
\end{equation}
Using the compatibility condition $\int_{\Omega} f = 0$ and testing \eqref{eq:EMI_weak} with $\tilde{v} = (v - 1/|\Omega_e| \int_{\Omega_e} v)$, it is easy to see that \eqref{eq:EMI_weak} holds for any $ v \in H^1(\Omega_i \cup \Omega_e)$:
\begin{equation}
H^1(\Omega_i \cup \Omega_e) = \left \{ u \in  L^2(\Omega): \,\,\,  u \vert_{\Omega_i} \in H^1(\Omega_i),  \,\, u \vert_{\Omega_e} \in H^1(\Omega_e)  \right \}. \label{eq:broken_H1}
\end{equation}
Hereinafter, we assume that the above problem is well-posed. We refer to~\cite{amar2005existence, veneroni2006reaction, franzone2002degenerate} for detailed expositions on the analysis; for example, see~\cite[Theorem 1]{franzone2002degenerate} for a precise statement on the existence and uniqueness of solutions. 

\section{Discontinuous Galerkin method} We introduce the semi-discrete DG formulation for \eqref{eq:EMI} in this section. 
\subsection{Notation and preliminaries} 
We consider a family of quasi-uniform conforming affine simplicial meshes $\{\mathcal{T}_h\}_{h >0}$  of $\Omega$. Define $H^1(\mathcal{T}_h)$ as the broken $H^1$ space corresponding to the mesh $\mathcal{T}_h$: 
\[H^1(\mathcal{T}_h) = \{ u \in L^2(\Omega): \;\; u_{\vert_{K}}  \in H^1(K), \;\; \forall K \in \mathcal{T}_h \}. \] 

For each element $K \in \mathcal{T}_h$, we denote by $h_K$ the diameter of $K$, and we denote the characteristic mesh size by $h = \max_{K \in \mathcal{T}_h} h_K$. We further assume that the triangulation is conforming to $\Gamma$ in the sense
that for all $K\in\mathcal{T}_h$ the intersection $\Gamma\cap\overline{K}$ is either a vertex or
one facet of $K$. We denote by $\mathcal{T}_{h,i}$ (resp. $\mathcal{T}_{h,e}$) the collection of elements intersecting $\Omega_i$ (resp. $\Omega_e$). By the assumptions on the mesh, we remark that $\mathcal{T}_{h,i} \cap \mathcal{T}_{h,e} = \emptyset$.

The collection of all
interior facets of $\mesh$ shall be denoted by
 $\mathcal{F}_h$
and will be partitioned into $\mathcal{F}_{i, h}$ containing facets in the interior of $\Omega_i$, $\mathcal{F}_{e, h}$ containing facets in the interior of $\Omega_e$, 
and $\mathcal{F}_{\Gamma, h}$ containing facets located on the interface $\Gamma$. 
For $F \in \mathcal{F}_{\Gamma,h}$, we set the normal  $\bm n _F = \bm n_i$ the outward normal to $\partial \Omega_i$. 
Further, given a
facet $F\in\mathcal{F}_h$ with a normal $\bm n_F$ pointing from $K_F^1$ to $K_F^2$, we define scalar jump and average operators for $u \in H^1(\mathcal{T}_h)$: 
\[
[u] = u \vert_{K_F^1} - u \vert_{K_F^2}, \quad \{u\} = \frac{1}{2}(u\vert_{K_F^1} +u \vert_{K_F^2}), \quad \forall F = \partial K_F^1 \cap \partial K_F^2. 
\]
If $F \subset \Gamma_e$,  the jump and the average of a function $u$ are taken as the single-valued trace of $u$. Similar definitions are used for vector-valued functions.

Given an integer $k\geq 1$, let $\mathbb P_k(K)$ be the space of polynomials of degrees up to $k$ on $K$
and define the broken polynomial space $$V^k_h = \left\{v\in L^2(\Omega):  \,\,\,  v_{\vert_K}\in \mathbb P_k(K), \quad \forall K \in \mathcal{T}_h \right\}. $$
Further, we denote by $V^k_{h,i}$ (resp. $V^k_{h,e}$) the space of broken polynomials of degree $k$ defined over $\mathcal{T}_{h,i}$ (resp. $\mathcal{T}_{h,e}$).  
 
We now define the form $a_h(\cdot,\cdot)$ corresponding to an interior penalty DG discretization, see the textbooks ~\cite{riviere2008discontinuous, di2011mathematical} for details on such methods.  For all $u,v \in V_h^k$, 
\begin{equation}\label{eq:poisson_DGk}
\begin{aligned}
  a_h(u, v) &= \sum_{K\in\mathcal{T}_h}\int_{K}\kappa\nabla u\cdot \nabla v - \sum_{F\in\mathcal{F}_{h} \backslash  \mathcal{F}_{\Gamma,h}} \int_{F} \{\kappa \nabla u\}\cdot \bm n_F [ v ] \\
  &
  +\epsilon\sum_{F\in\mathcal{F}_{h} \backslash  \mathcal{F}_{\Gamma,h}} \int_{F} \{\kappa \nabla v\}\cdot \bm n_F [u]
  +  \sum_{F\in\mathcal{F}_{h} \backslash  \mathcal{F}_{\Gamma,h}} \int_F 
  \frac{\gamma}{h_F}[u][v]
  \,\mathrm{d}s.
\end{aligned}
\end{equation}
Here, $\kappa = \kappa_i $ in $\Omega_i$, $\kappa = \kappa_e$ in $\Omega_e$, and $h_F= \lvert F \rvert^{1/(d-1)}$.  The parameter $\epsilon \in \{-1,0,1\}$ yields symmetric, incomplete, and non-symmetric discretizations respectively. Moreover, $\gamma=\gamma(k, d , \kappa)$ is a stabilization parameter to be chosen large enough (if $\epsilon \in \{-1,0\}$) so that the form $a_h$ is coercive: 
\begin{equation} \label{eq:coercivity}
    a_h(u_h, u_h) \geq C_{\mathrm{coerc}} |u_h|^2_{\DG},  \quad \forall u_h \in V_h^k, 
\end{equation}
where we define the DG semi-norm: 
\begin{equation}\label{eq:dgseminorm}
 |u|_{\DG}^2=  \sum_{K \in \mathcal{T}_h } \|\nabla u \|^2_{L^2(K)} + \sum_{F\in\mathcal{F}_h \backslash \mathcal{F}_{\Gamma, h}} \frac{\gamma}{h_F}\|[u]\|^2_{L^2(F)}, \quad \forall u \in H^1(\mesh).  
\end{equation}
The proof of the coercivity property \eqref{eq:coercivity} is immediate for $\epsilon = 1$ and follows from standard arguments, see for e.g. \cite[Chapter 2]{riviere2008discontinuous}, for $\epsilon \in \{-1,0\}$.  It is also standard to show that the form $a_h$ is bounded with respect to the $|\cdot|_{\DG}$ semi-norm:
\begin{equation} \label{eq:continuity}
a_h(u_h,w_h) \leq C_{\mathrm{cont}} |u_h|_{\DG} |w_h|_{\DG}, \quad \forall u_h , w_h \in V_h^k. 
\end{equation}
We will make use of the following discrete subspaces  $\tilde{V}_{h0}^k \subset \tilde{V}_h^k \subset V_h^k$:
\begin{align}\label{eq:discretesubspaces} 
 \tilde{V}_h^k &= \left \{v_h \in V_h^k:  \,\,\,  [v_h] = 0, \quad \forall F \in \mathcal{F}_{\Gamma,h} \right \} ,\\ 
  \tilde{V}_{h0}^k & =  \left \{ v_h \in \tilde V_h^k:  \quad \int_{\Omega_e} v_h = 0  \right\}. 
\end{align}
\begin{lemma}[Poincar\'e's inequality over $\tilde V_{h0}^k$] 
We have that 
\begin{equation}\label{eq:Poincare}
\|u_h\|_{L^2(\Omega)} \leq C_P |u_h|_{\DG}, \quad \forall u_h \in \tilde V_{h0}^k. 
\end{equation}
\end{lemma}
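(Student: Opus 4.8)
The plan is to reduce the claim to a standard broken Poincar\'e--Friedrichs inequality by exploiting the defining constraints of $\tilde V_{h0}^k$. The crucial observation is that although the seminorm $|\cdot|_{\DG}$ omits the interface facets $\mathcal{F}_{\Gamma,h}$ and carries no penalty on the outer boundary $\Gamma_e$, nothing is actually lost for functions in $\tilde V_{h0}^k$: by definition $[u_h]=0$ on every $F\in\mathcal{F}_{\Gamma,h}$, so those facets contribute zero to the full jump seminorm, and $\Gamma_e$-facets are boundary facets that are absent from $\mathcal{F}_h$ to begin with. Consequently, for $u_h\in\tilde V_{h0}^k$ the quantity $|u_h|_{\DG}^2$ coincides (up to the fixed factor $\gamma$ and quasi-uniformity equivalences $h_F\sim h_K\sim h$) with the full broken seminorm $\sum_{K}\|\nabla u_h\|_{L^2(K)}^2+\sum_{F\in\mathcal{F}_h}h_F^{-1}\|[u_h]\|_{L^2(F)}^2$ that penalizes \emph{all} interior jumps. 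Thus $u_h$ may legitimately be regarded as a piecewise-polynomial (broken $H^1$) function on the whole of $\Omega$ whose complete jump seminorm is controlled by $|u_h|_{\DG}$.

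First I would invoke the broken Poincar\'e--Friedrichs inequality for piecewise $H^1$ functions (see~\cite{di2011mathematical}, and the classical result of Brenner) in its zero-mean form: writing $\bar u_h=\tfrac{1}{\meas{\Omega}}\int_\Omega u_h$, there is a constant $C_{PF}$, independent of $h$, with
\begin{equation*}
\|u_h-\bar u_h\|_{L^2(\Omega)}\;\leq\; C_{PF}\Big(\sum_{K\in\mesh}\|\nabla u_h\|_{L^2(K)}^2+\sum_{F\in\mathcal{F}_h}h_F^{-1}\|[u_h]\|_{L^2(F)}^2\Big)^{1/2}\;\leq\; C\,|u_h|_{\DG}.
\end{equation*}
It remains to control the constant mode $\bar u_h$, which is where the constraint $\int_{\Omega_e}u_h=0$ enters. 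Since $\int_{\Omega_e}(u_h-\bar u_h)=-\bar u_h\meas{\Omega_e}$, Cauchy--Schwarz gives $\meas{\Omega_e}\,|\bar u_h|\leq \meas{\Omega_e}^{1/2}\|u_h-\bar u_h\|_{L^2(\Omega_e)}\leq \meas{\Omega_e}^{1/2}\|u_h-\bar u_h\|_{L^2(\Omega)}$, so that $\|\bar u_h\|_{L^2(\Omega)}=|\bar u_h|\meas{\Omega}^{1/2}\leq (\meas{\Omega}/\meas{\Omega_e})^{1/2}\|u_h-\bar u_h\|_{L^2(\Omega)}$. A triangle inequality then yields $\|u_h\|_{L^2(\Omega)}\leq\big(1+(\meas{\Omega}/\meas{\Omega_e})^{1/2}\big)\|u_h-\bar u_h\|_{L^2(\Omega)}\leq C_P\,|u_h|_{\DG}$, as desired.

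The step I expect to be the main obstacle is not the mean-bounding algebra, which is elementary, but justifying the \emph{correct} version of the broken Poincar\'e inequality: one with no boundary penalty whatsoever (the pure-Neumann setting dictated by the condition on $\Gamma_e$), relying instead on the subdomain mean constraint to annihilate the rigid constant mode. The continuity of $u_h$ across $\Gamma$ is essential here, as it is precisely what licenses treating $u_h$ as a genuine broken-$H^1$ function on all of $\Omega$ and applying the inequality with its full interior-jump seminorm; without it the omission of $\mathcal{F}_{\Gamma,h}$ would leave the membrane jump uncontrolled and the estimate would fail. Quasi-uniformity of $\{\mesh\}_{h>0}$ guarantees $h_F\sim h$ so that the penalty weights match those in the reference inequality and the resulting constant $C_P$ is independent of $h$. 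An alternative, slightly more direct route would be to cite a version of the broken Poincar\'e--Friedrichs inequality stated with a subdomain mean constraint $\int_{\Omega_e}u_h=0$ directly, bypassing the intermediate whole-domain mean $\bar u_h$.
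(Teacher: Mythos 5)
Your proof is correct, and it rests on the same foundation as the paper's: a broken Poincar\'e--Friedrichs inequality for piecewise $H^1$ functions combined with the two defining constraints of $\tilde V_{h0}^k$ (vanishing jumps on $\mathcal{F}_{\Gamma,h}$ and zero mean on $\Omega_e$). The difference lies in how the constant mode is eliminated. The paper quotes Brenner's inequality in the form that already contains a general functional nonvanishing on constants,
\begin{equation*}
\|u_h\|^2_{L^2(\Omega)} \lesssim |u_h|_{\DG}^2 \;+\; \sum_{F \in \mathcal{F}_{\Gamma, h}} \frac{\gamma}{h_F} \|[u_h]\|^2_{L^2(F)} \;+\; |\Psi(u_h)|^2,
\qquad \Psi(u) = \frac{1}{|\Omega_e|}\int_{\Omega_e} u,
\end{equation*}
where the choice of $\Psi$ as the subdomain average is exactly Brenner's Example 4.3; for $u_h\in\tilde V_{h0}^k$ the last two terms vanish identically, so the lemma is immediate. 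You instead invoke only the most standard global zero-mean form of the inequality and then transfer the constraint from the global mean $\bar u_h$ to the $\Omega_e$-mean by the elementary Cauchy--Schwarz computation $|\Omega_e|\,|\bar u_h| = \bigl|\int_{\Omega_e}(u_h-\bar u_h)\bigr| \le |\Omega_e|^{1/2}\|u_h-\bar u_h\|_{L^2(\Omega)}$, followed by a triangle inequality. Both routes are valid; yours is a few lines longer but needs a weaker citation, while the paper's is a one-liner because the cited theorem is stated for arbitrary functionals $\Psi$ that do not vanish on constants. Two cosmetic remarks: the quasi-uniformity you invoke to ``match penalty weights'' is unnecessary, since $\gamma$ is a fixed positive constant and so $\gamma/h_F \gtrsim 1/h_F$ facet by facet regardless of mesh grading; and your observation that $\Gamma_e$-facets play no role is right for the simple reason that $\mathcal{F}_h$ consists of interior facets only, so the reference inequality must indeed be taken in its pure-Neumann (no boundary penalty) form, exactly as you say.
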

\begin{proof}
This follows from  Poincar\'e's inequality~\cite[Theorem 5.1, example 4.3]{brenner2003poincare}: 
\begin{equation}
\|u_h\|^2_{L^2(\Omega)} \leq C_P \left(|u_h|_{\DG}^2 + \sum_{F \in \mathcal{F}_{\Gamma, h}} \frac{\gamma}{h_F} \|[u_h]\|^2_{L^2(F)} + |\Psi(u_h)|^2 \right), \quad \forall u_h \in V_h^k, 
 \end{equation}
where $\Psi(u) = \frac{1}{|\Omega_e|}\int_{\Omega_e} u$. Using the definition of $\tilde V_{h0}^k$ yields the result. 
\end{proof}
We also define a broken polynomial space on $\Gamma$: 
\begin{align}
\hat V_h^k = \{ \hat v_h \in L^2(\Gamma): \quad \hat v_h \in \mathbb{P}^k(F), \quad \forall F \in \mathcal{F}_{\Gamma,h} \},   
\end{align}
along with the $L^{2}$-projection $\hat \pi_h: L^2(\Gamma) \rightarrow \hat V_h^k$ defined as follows. For any $\hat v \in L^2(\Gamma)$,
\begin{equation}
\int_{\Gamma} \hat \pi_h \hat v  \,\hat  \varphi_h = \int_{\Gamma} \hat v \, \hat \varphi_h , \quad \forall \hat\varphi_h \in \hat V_h^k.
\end{equation}
Throughout the paper, we use the standard notation $A \lesssim B$ if  $A \leq C B$ for some positive constant $C$ independent of the mesh parameters and the time step.

\subsection{Semi-discrete formulation} \label{sec:semi_discrete_posed} The semi-discrete DG approximation of the EMI problem in \eqref{eq:EMI} reads as follows. Find $u_h(t) \in V_h^k $  with $\int_{\Omega_e} u_h(t) = 0$ such that for a.e. in $(0,T)$, 
\begin{subequations}\label{eq:semi_discrete}
\begin{align}
\int_{\Gamma} C_M \frac{\partial [u_h]}{ \partial t} [v_h]  + \int_{\Gamma} f_\Gamma ([u_h]) [v_h] + a_h( u_h, v_h) & = \int_{\Omega} f v_h, \quad \forall v_h \in V_h^k, \\ 
[u_h](0) & = \hat \pi_h \hat u^0.
\end{align}
\end{subequations}

\textbf{Main goal and outline}. The focus of this section is showing well-posedness of the above formulation. This is attained by first transforming the problem to one posed on the membrane $\Gamma$, following ideas from~\cite{fokoue2023numerical}. To do so,  we introduce two lifting operators, $L_h, S_h: \hat V_h^k \rightarrow V_h^k$. The first is a linear lift denoted by $L_h$,  see Lemma \ref{lemma:lift_L}, and is naturally constructed to satisfy $[L_h g] = g$ on $\Gamma$. The second operator $S_h$ is constructed in Lemma \ref{lemma:S_h} to satisfy \eqref{eq:semi_discrete} for test functions in $\tilde V_h^k$. These two operators allow us to write a problem on $\Gamma$, see \eqref{eq:problem_Gamma_time}, which is equivalent to \eqref{eq:semi_discrete} as shown in \Cref{lemma:equivalance}. Uniqueness and existence of solutions are then established for the problem on $\Gamma$ in \Cref{thm:exist_unique} using the Cauchy-Lipschitz Theorem.

We begin with defining a linear lifting operator.  
\begin{lemma}\label{lemma:lift_L}
 There is a linear operator $L_h:   \hat V_h^k \rightarrow V_h^k$ satisfying the following properties.  
\begin{multline}
\label{eq:property_Lh}
\|L_h g \|_{L^2(K)} \lesssim h_{K}^{1/2} \|g \|_{L^2(F)}, \,\,\,  \mathrm{for} \,\,\,  F \subset \partial K,  \\ 
[L_h  g ] = g , \,\,\, \mathrm{ on } \,\,\,  F \in \mathcal{F}_{\Gamma,h}, \quad \mathrm{and} \quad  \int_{\Omega_e} L_h g  = 0 .   
\end{multline}
\end{lemma}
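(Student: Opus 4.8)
The plan is to construct $L_h$ element-by-element, placing the entire lift on the intracellular side of $\Gamma$ so that the zero-average constraint over $\Omega_e$ is satisfied for free. First I would fix a reference simplex $\hat K$ with one distinguished facet $\hat F$ and build a linear reference lifting $\hat R: \mathbb{P}_k(\hat F) \to \mathbb{P}_k(\hat K)$ whose trace on $\hat F$ is the identity, i.e. $(\hat R \hat g)\vert_{\hat F} = \hat g$. Such a right inverse of the (surjective) polynomial trace map exists on these finite-dimensional spaces, and since all norms on a finite-dimensional space are equivalent, it is automatically bounded: $\|\hat R \hat g\|_{L^2(\hat K)} \le C(k,d)\,\|\hat g\|_{L^2(\hat F)}$. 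One concrete choice is the extension of $\hat g$ that is constant along the direction transversal to $\hat F$, but the abstract right-inverse suffices.

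Next, for each interface facet $F \in \mathcal{F}_{\Gamma,h}$ I would use the mesh conformity assumption: each element meets $\Gamma$ in at most one facet, so there is a unique intracellular element $K = K_i(F) \in \mathcal{T}_{h,i}$ having $F$ as a facet, and the map $F \mapsto K_i(F)$ is injective. Letting $\Phi_K: \hat K \to K$ be the affine map with $\Phi_K(\hat F) = F$, I set $(L_h g)\vert_K = \big(\hat R\,(g\circ \Phi_K\vert_{\hat F})\big)\circ \Phi_K^{-1}$ on each such $K$ and $L_h g \equiv 0$ on every other element. This is linear in $g$ and lies in $V_h^k$; because its support consists of the pairwise distinct intracellular elements $K_i(F)$, the global bound reduces to a single-element estimate.

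The jump identity is then immediate: on $F \in \mathcal{F}_{\Gamma,h}$ with $\bm n_F = \bm n_i$ and $[\cdot] = (\cdot)\vert_{\Omega_i} - (\cdot)\vert_{\Omega_e}$, the intracellular trace of $L_h g$ equals $g$ by construction while the extracellular trace vanishes, so $[L_h g] = g$. The average condition is likewise automatic, since $L_h g$ vanishes identically on $\Omega_e$ and hence $\int_{\Omega_e} L_h g = 0$. For the quantitative bound I would invoke the standard affine-equivalence scalings $\|v\|_{L^2(K)} \approx h_K^{d/2}\|\hat v\|_{L^2(\hat K)}$ and $\|g\|_{L^2(F)} \approx h_F^{(d-1)/2}\|\hat g\|_{L^2(\hat F)}$ together with the quasi-uniformity relation $h_F \approx h_K$; combining these with the boundedness of $\hat R$ gives $\|L_h g\|_{L^2(K)} \lesssim h_K^{d/2} h_F^{-(d-1)/2}\|g\|_{L^2(F)} \approx h_K^{1/2}\|g\|_{L^2(F)}$, which is exactly \eqref{eq:property_Lh}.

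The construction is essentially routine, and no genuine obstacle arises precisely because lifting entirely to the intracellular side decouples the three requirements: the jump is produced on $K_i(F)$, the $\Omega_e$-average vanishes trivially, and the norm estimate is purely local. The only points requiring care are the existence and $(k,d)$-uniform boundedness of the reference lifting $\hat R$, which rest on surjectivity of the polynomial trace map and finite-dimensionality, and the bookkeeping of the scaling exponents $d/2$ versus $(d-1)/2$ so that the half-power $h_K^{1/2}$ emerges correctly.
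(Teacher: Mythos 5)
Your proposal is correct and follows essentially the same route as the paper: both lift $g$ onto the unique intracellular element adjacent to each interface facet (zero elsewhere, which makes the jump identity and the $\Omega_e$-average condition automatic), construct the lift on the reference element, and obtain the $h_K^{1/2}$ factor from the affine scaling $h_K^{d/2}h_F^{-(d-1)/2}$. The only cosmetic difference is that the paper fixes the concrete constant-along-the-normal extension and invokes norm equivalence on its image, whereas you argue via an abstract bounded right inverse of the polynomial trace map; both rest on the same finite-dimensionality observation.
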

\begin{proof} 
The construction of $L_h$ is natural. First, for $K \in \mathcal{T}_h, $ with $|K \cap \Omega_i| = \emptyset $, we set $L_h g = 0 $. 
Since the mesh is assumed to conform to the interface, any $F \in \Gamma_h$ is such that $F =  \partial  K_F \cap \partial \Omega_i$ for $K_F \subset \Omega_i$. 
On $K_F$, we naturally extend $g$ to the interior of $K_F$. Precisely, let $A$ denote the invertible affine mapping between the reference simplex $\tilde K_F$ and $K_F$. Then for $\tilde x \in \tilde K_F$, define $\tilde L_h g (\tilde x)= g(A( \pi \tilde x))$ where $\pi$ is the orthogonal projection onto  $\tilde F$ (the reference facet). Note that such a projection is affine linear and is always possible on the reference element.  For $x \in K_F$, we then set $L_h g (x) = \tilde L_h g (A^{-1} x). $  It is clear that $L_h g \in V_h^k$ since it is a composition of affine linear functions and $g \in \mathbb P^k(F)$. 

The second property holds since for any $F \in \mathcal{F}_{\Gamma,h}$, $F \in \partial K_F^1 \cap \partial K_F^2$  with $|K_F^1 \cap \Omega_i| \neq \emptyset$,  we have  that  $ L_h g \vert_{K_F^1} = g $ and $L_h g  \vert_{K_F^2} = 0$ on $\Gamma$.  The third property holds trivially since $L_h g = 0$ on $\Omega_e$. To show the first bound, we use the shape regularity of the mesh and estimate
\begin{multline}   
\|L_h g \|_{L^2(K_F) } \lesssim |K_F|^{1/2} \| \tilde L_h g \|_{L^2(\tilde K_F)} \lesssim |K_F|^{1/2}  \| \tilde L_h g \|_{L^2(\tilde F)} \\  \lesssim |K_F|^{1/2} |F|^{-1/2}\|  L_h g \|_{L^2( F)} \lesssim h_{K_F}^{1/2}\|  L_h g \|_{L^2( F)}.   
\end{multline}
 In the above, we used norm equivalence since $\|\cdot \|_{L^2(\tilde F)}$ defines a norm over the space of polynomials that  are constant along the normal of $\tilde F$. 
\end{proof}
We now construct the operator $S_h$. First, for a given $\hat v \in \hat V_h^k$,  we define  $\tilde u_{h, \hat v}  \in \tilde V_{h0}^k$ solving
\begin{equation}
a_h(\tilde u_{h, \hat v} , w_h )  = \int_{\Omega} f w_h - a_h(L_h \hat v , w_h), \quad \forall w_h \in  \tilde V_h^k. \label{eq:problem_tilde}  
\end{equation}
\begin{lemma}
For $\hat v \in \hat{V}_h^k$, there exists a unique solution $\tilde u_{h,\hat v} \in \tilde V_{h0}^k$ to \eqref{eq:problem_tilde}. 
\end{lemma}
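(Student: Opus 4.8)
The plan is to recognize \eqref{eq:problem_tilde} as a standard linear variational problem and to invoke the Lax--Milgram lemma on a suitable Hilbert space, after first resolving the apparent mismatch between the trial space $\tilde V_{h0}^k$ and the test space $\tilde V_h^k$. These two spaces differ only through the mean-value constraint $\int_{\Omega_e} v_h = 0$, and since the constant function $\mathbb{1}\equiv 1$ is continuous across $\Gamma$ (so $[\mathbb{1}]=0$ on every $F\in\mathcal F_{\Gamma,h}$) it belongs to $\tilde V_h^k$ but not to $\tilde V_{h0}^k$. This gives the direct decomposition $\tilde V_h^k = \tilde V_{h0}^k \oplus \mathrm{span}\{\mathbb{1}\}$, so the test functions consist of $\tilde V_{h0}^k$ together with the single extra direction $w_h=\mathbb{1}$.

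First I would dispatch the equation associated with $w_h=\mathbb{1}$. Inspecting \eqref{eq:poisson_DGk} term by term shows that $a_h(u_h,\mathbb{1})=0$ for every $u_h\in V_h^k$: the volume term vanishes because $\nabla\mathbb{1}=0$, and all facet terms vanish because $[\mathbb{1}]=0$ on every interior facet (and again $\nabla\mathbb{1}=0$ kills the consistency term). On the right-hand side, $\int_\Omega f\,\mathbb{1}=\int_\Omega f=0$ by the compatibility condition, while $a_h(L_h\hat v,\mathbb{1})=0$ by the same computation. Hence the $\mathbb{1}$-equation reads $0=0$ and imposes no constraint. Consequently \eqref{eq:problem_tilde} is equivalent to the \emph{square} problem of finding $\tilde u_{h,\hat v}\in\tilde V_{h0}^k$ such that $a_h(\tilde u_{h,\hat v},w_h)=\int_\Omega f w_h - a_h(L_h\hat v,w_h)$ for all $w_h\in\tilde V_{h0}^k$.

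It then remains to verify the hypotheses of Lax--Milgram on $\tilde V_{h0}^k$. The form $a_h$ is bounded by \eqref{eq:continuity} and coercive by \eqref{eq:coercivity}; the key point is that, thanks to Poincar\'e's inequality \eqref{eq:Poincare}, the seminorm $|\cdot|_{\DG}$ is in fact a norm on $\tilde V_{h0}^k$, so $(\tilde V_{h0}^k,|\cdot|_{\DG})$ is a Hilbert space on which $a_h$ is coercive with respect to a genuine norm. The linear functional $w_h\mapsto \int_\Omega f w_h - a_h(L_h\hat v,w_h)$ is bounded on this space: the first term is controlled by $\|f\|_{L^2(\Omega)}\|w_h\|_{L^2(\Omega)}\le C_P\|f\|_{L^2(\Omega)}|w_h|_{\DG}$ using \eqref{eq:Poincare}, and the second by $C_{\mathrm{cont}}|L_h\hat v|_{\DG}|w_h|_{\DG}$ using \eqref{eq:continuity}, where $L_h\hat v\in V_h^k$ has finite DG seminorm. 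Lax--Milgram then delivers the unique solution $\tilde u_{h,\hat v}$. (Equivalently, since $\tilde V_{h0}^k$ is finite-dimensional and the reduced system is square, existence follows from uniqueness: if $a_h(u_h,w_h)=0$ for all $w_h\in\tilde V_{h0}^k$, taking $w_h=u_h$ and using coercivity gives $|u_h|_{\DG}=0$, hence $u_h=0$ by \eqref{eq:Poincare}.)

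The only non-routine point is the first step: recognizing that the trial/test-space mismatch is harmless precisely because both the bilinear form and the load vanish against constants, the latter being exactly the compatibility condition $\int_\Omega f=0$. Once the problem is reduced to a coercive square system on $\tilde V_{h0}^k$, the remaining estimates are entirely standard.
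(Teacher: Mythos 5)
Your proof is correct and follows essentially the same route as the paper: Lax--Milgram on $\tilde V_{h0}^k$ (with coercivity \eqref{eq:coercivity} and Poincar\'e \eqref{eq:Poincare}), combined with the observation that the constant direction in $\tilde V_h^k$ is harmless because $a_h(\cdot,\mathbb{1})=0$ and $\int_\Omega f=0$. The only cosmetic difference is the order of operations---you reduce the test space to $\tilde V_{h0}^k$ before solving, whereas the paper solves on $\tilde V_{h0}^k$ first and then extends to test functions in $\tilde V_h^k$ by subtracting the mean---which is the same decomposition read in the opposite direction.
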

\begin{proof}
From the coercivity property \eqref{eq:coercivity} over $\tilde V_{h0}^k$, and  Poincar\'e's inequality \eqref{eq:Poincare}, an application of the Lax-Milgram theorem shows that there exists a unique solution $\tilde u_{h, \hat v} \in \tilde V_{h0}^k $ solving \eqref{eq:problem_tilde} for all $w_h \in \tilde V_{h0}^k$. To see that $\tilde u_{h, \hat v} \in \tilde V_{h0}^k$ solves \eqref{eq:problem_tilde} for any $w_h \in \tilde V_h^k$, test \eqref{eq:problem_tilde} with $(w_h - \frac{1}{|\Omega_e|} \int_{\Omega_e} w_h) \in \tilde V_{h0}^k$, use the fact that $a_h(v_h, C) = 0$ for any $v_h $ and any constant $C$, and use the compatibility condition that $\int_\Omega f = 0$.
 \end{proof} 
Thus, we can construct the operator $S_h: \hat{V}_h^k \rightarrow V_h^k$ satisfying the following properties. 
\begin{lemma} \label{lemma:S_h}
The operator $ S_h: \hat{V}_h^k \rightarrow V_h^k$ defined by $S_h(\hat v) = \tilde u_{h,\hat v} + L_h(\hat v)$  solves
\begin{equation}
a_h(S_h(\hat v ), w_h) = \int_{\Omega} f w_h, \quad \forall w_h \in \tilde V_h^k.   \label{eq:problem_Gamma}
\end{equation} 
Further, we have that 
\begin{equation}
[S_h (\hat v)]= \hat v  \,\, \mathrm{ on } \,\, \mathcal{F}_{\Gamma,h}, \quad \mathrm{and}  \quad \int_{\Omega_e} S_h \hat v = 0.  \label{eq:properties_Sh}
\end{equation}
In addition, for $g_1, g_2 \in \hat V_h^k$, 
we have that 
\begin{equation}
|S_h(g_1) - S_h(g_2)|_{\DG} \lesssim  |L_h(g_1 - g_2) |_{\DG}.  \label{eq:Lipschitz_Sh}
\end{equation}
\end{lemma}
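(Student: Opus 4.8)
The plan is to verify the three assertions in turn, treating the first two as immediate consequences of the construction and reserving the actual work for the Lipschitz bound \eqref{eq:Lipschitz_Sh}. For the identity \eqref{eq:problem_Gamma}, I would use only linearity of $a_h(\cdot,w_h)$ in its first argument together with the definition $S_h(\hat v) = \tilde u_{h,\hat v} + L_h(\hat v)$: this gives $a_h(S_h(\hat v), w_h) = a_h(\tilde u_{h,\hat v}, w_h) + a_h(L_h \hat v, w_h)$ for every $w_h \in \tilde V_h^k$, and inserting the defining relation \eqref{eq:problem_tilde} for $\tilde u_{h,\hat v}$ makes the $a_h(L_h \hat v, w_h)$ contributions cancel, leaving exactly $\int_\Omega f w_h$.

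Next I would establish the two properties in \eqref{eq:properties_Sh} by decomposing the jump and the mean. On any $F \in \mathcal{F}_{\Gamma,h}$ we have $[S_h(\hat v)] = [\tilde u_{h,\hat v}] + [L_h \hat v]$; the first term vanishes since $\tilde u_{h,\hat v} \in \tilde V_{h0}^k \subset \tilde V_h^k$ and every member of $\tilde V_h^k$ has zero jump across $\Gamma$ by \eqref{eq:discretesubspaces}, while the second equals $\hat v$ by the second property in \eqref{eq:property_Lh}. Similarly $\int_{\Omega_e} S_h \hat v = \int_{\Omega_e} \tilde u_{h,\hat v} + \int_{\Omega_e} L_h \hat v = 0$, each term being zero by the definition of $\tilde V_{h0}^k$ and the third property in \eqref{eq:property_Lh}.

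For the Lipschitz estimate I would exploit linearity to pass to the difference. Writing $g = g_1 - g_2$ and subtracting the two instances of \eqref{eq:problem_tilde}, the fixed source $\int_\Omega f w_h$ cancels, so the difference $\tilde u' := \tilde u_{h,g_1} - \tilde u_{h,g_2} \in \tilde V_{h0}^k$ satisfies the homogeneous problem $a_h(\tilde u', w_h) = -a_h(L_h g, w_h)$ for all $w_h \in \tilde V_h^k$, using that $L_h$ is linear. Testing with $w_h = \tilde u' \in \tilde V_{h0}^k \subset \tilde V_h^k$ and applying coercivity \eqref{eq:coercivity} and boundedness \eqref{eq:continuity} yields $C_{\mathrm{coerc}} |\tilde u'|_{\DG}^2 \leq a_h(\tilde u', \tilde u') = -a_h(L_h g, \tilde u') \leq C_{\mathrm{cont}} |L_h g|_{\DG} |\tilde u'|_{\DG}$, whence $|\tilde u'|_{\DG} \leq (C_{\mathrm{cont}}/C_{\mathrm{coerc}}) |L_h g|_{\DG}$. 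Since $S_h(g_1) - S_h(g_2) = \tilde u' + L_h g$ by linearity of the lift, the triangle inequality then gives $|S_h(g_1) - S_h(g_2)|_{\DG} \leq |\tilde u'|_{\DG} + |L_h g|_{\DG} \leq (1 + C_{\mathrm{cont}}/C_{\mathrm{coerc}}) |L_h(g_1 - g_2)|_{\DG}$, which is \eqref{eq:Lipschitz_Sh}.

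The one point I would be most careful about is that $S_h$ is an \emph{affine} rather than a linear function of $\hat v$, because of the fixed data $f$ entering \eqref{eq:problem_tilde}; linearity can therefore only be invoked for the \emph{difference} $S_h(g_1) - S_h(g_2)$, where the data-dependent part drops out. I would verify explicitly that this constant contribution cancels and that $\tilde u'$ genuinely lands in the subspace $\tilde V_{h0}^k$ (a linear subspace, being cut out by the linear constraints $[v_h]=0$ on $\Gamma$ and $\int_{\Omega_e} v_h = 0$), so that coercivity and boundedness apply. Everything else is a routine energy estimate with the constants $C_{\mathrm{coerc}}$ and $C_{\mathrm{cont}}$ already in hand.
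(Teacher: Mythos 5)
Your proposal is correct and follows essentially the same route as the paper's proof: both verify \eqref{eq:problem_Gamma} and \eqref{eq:properties_Sh} directly from the definitions of $S_h$, $L_h$, and $\tilde V_{h0}^k$, and both obtain \eqref{eq:Lipschitz_Sh} by subtracting the two instances of \eqref{eq:problem_tilde}, testing with the difference $\tilde u_{h,g_1}-\tilde u_{h,g_2}$, applying coercivity \eqref{eq:coercivity} and continuity \eqref{eq:continuity}, and finishing with the triangle inequality. Your explicit remark that $S_h$ is affine rather than linear (so that linearity may only be invoked for differences) is a point the paper leaves implicit, but it does not change the argument.
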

\begin{proof}
The identity \eqref{eq:problem_Gamma} follows from the definition of $S_h$ and \eqref{eq:problem_tilde}.   The  properties in \eqref{eq:properties_Sh} hold since  $\tilde u_{h,\hat v} \in \tilde{V}_{h,0}
^k$, $[S_h(\hat v)]  = [\tilde u_{h,\hat v}] +[L_h \hat v] = [L_h \hat v] = \hat v$ and $\int_{\Omega_e} L_h (\hat v) = 0$. For the last bound, note that for $g_1, g_2$ we have that for all  $w_h \in \tilde V_h^k $
\begin{equation}
a_{h} (\tilde u _{h,g_1} - \tilde u _{h,g_2}, w_h) = a_h(L_h (g_2) - L_h(g_1), w_h) = a_h(L_h(g_2-g_1), w_h). 
\end{equation}
In the above, we use the linearity of $L_h$. Testing with $w_h =\tilde u _{h,g_1} - \tilde u _{h,g_2} \in \tilde V_h^k$ and using the coercivity property \eqref{eq:coercivity} and continuity of $a_h$ \eqref{eq:continuity}, we obtain 
\begin{align}
 | \tilde u _{h,g_1} - \tilde u _{h,g_2} |^2_{\DG} & \lesssim |L_h(g_1-g_2)|_{\DG}  |\tilde u _{h,g_1} - \tilde u _{h,g_2}|_{\DG}. 
\end{align} 
The result then follows from the triangle inequality.
\end{proof}

With $S_h$ and $L_h$, we consider the following problem on $\Gamma$. Find $\hat u_h(t) \in \hat V_h^k$ satisfying for a.e. in $(0,T)$ 
 \begin{subequations}
\label{eq:problem_Gamma_time}
\begin{align}
    \int_{\Gamma} C_M \frac{\partial \hat u_h}{\partial t} \hat \varphi + \int_{\Gamma} f_{\Gamma}(\hat u_h) \hat \varphi + a_h(S_h \hat u_h, L_h \hat \varphi ) & = \int_{\Omega} f L_h \hat \varphi,   \quad \forall \hat \varphi \in \hat{V}_h^k,   \\ 
    \hat u_h(0) = \hat \pi_h u^0.
    \end{align}
\end{subequations}
\begin{lemma}[Equivalence]\label{lemma:equivalance}Problem \eqref{eq:problem_Gamma_time} has a unique solution if and only if problem \eqref{eq:semi_discrete} has a unique solution. 
 \end{lemma}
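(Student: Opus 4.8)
The plan is to exhibit an explicit bijection between the solution set of \eqref{eq:semi_discrete} and that of \eqref{eq:problem_Gamma_time}, realized by the jump map $u_h \mapsto [u_h]$ in one direction and the solution operator $\hat u_h \mapsto S_h \hat u_h$ in the other. Since a set is a singleton precisely when any set in bijection with it is a singleton, the equivalence of unique solvability then follows immediately. The substance of the argument is to check that these two maps send solutions to solutions and are mutually inverse when restricted to the solution sets. Throughout, both problems are ODE systems in time, so ``solution'' is understood in the a.e.-in-time sense, and the time-derivative and membrane-reaction terms are simply carried through the substitutions below unchanged.

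First I would treat the forward direction. Given a solution $u_h$ of \eqref{eq:semi_discrete}, set $\hat u_h := [u_h] \in \hat V_h^k$. Testing \eqref{eq:semi_discrete} against an arbitrary $w_h \in \tilde V_h^k$ annihilates both membrane integrals, since $[w_h]=0$ on $\Gamma$, leaving $a_h(u_h,w_h)=\int_\Omega f w_h$; combined with $\int_{\Omega_e} u_h = 0$ and $[u_h]=\hat u_h$, this is exactly the characterization \eqref{eq:problem_Gamma}--\eqref{eq:properties_Sh} of $S_h \hat u_h$. The key intermediate claim is that this characterization determines at most one element of $V_h^k$, which then forces $u_h = S_h \hat u_h$: if $d$ denotes the difference of two such functions, then $[d]=0$ on $\Gamma$ and $\int_{\Omega_e} d = 0$ give $d \in \tilde V_{h0}^k$, while $a_h(d,w_h)=0$ for all $w_h\in\tilde V_h^k$ tested at $w_h=d$ yields, via coercivity \eqref{eq:coercivity} and Poincar\'e's inequality \eqref{eq:Poincare}, that $d=0$. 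Having identified $u_h=S_h\hat u_h$, I would next test \eqref{eq:semi_discrete} with $v_h=L_h\hat\varphi$ for $\hat\varphi\in\hat V_h^k$; since $[L_h\hat\varphi]=\hat\varphi$ on $\Gamma$ by Lemma \ref{lemma:lift_L}, substituting $[u_h]=\hat u_h$ and $u_h=S_h\hat u_h$ reproduces \eqref{eq:problem_Gamma_time} term by term, and the initial condition transfers because $[u_h](0)=\hat\pi_h\hat u^0$.

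For the converse, given a solution $\hat u_h$ of \eqref{eq:problem_Gamma_time} I would set $u_h := S_h\hat u_h$, so that $[u_h]=\hat u_h$ and $\int_{\Omega_e}u_h=0$ hold by \eqref{eq:properties_Sh}. The main device is the splitting of an arbitrary $v_h\in V_h^k$ as $v_h = L_h[v_h] + w_h$ with $w_h := v_h - L_h[v_h]$; because $[L_h[v_h]]=[v_h]$ on $\Gamma$, the remainder satisfies $[w_h]=0$, i.e. $w_h\in\tilde V_h^k$. Equation \eqref{eq:semi_discrete} tested against $w_h$ holds by the defining property \eqref{eq:problem_Gamma} of $S_h$ (the membrane terms vanishing), whereas \eqref{eq:semi_discrete} tested against $L_h[v_h]$ is precisely \eqref{eq:problem_Gamma_time} with $\hat\varphi=[v_h]$, after inserting $\hat u_h=[u_h]$ and $S_h\hat u_h=u_h$. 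Summing the two identities and invoking bilinearity of $a_h$, linearity of the right-hand side, and $[w_h]+[L_h[v_h]]=[v_h]$ on $\Gamma$ recovers \eqref{eq:semi_discrete} for the original $v_h$; the initial datum matches since $[u_h](0)=\hat u_h(0)$.

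Finally I would record that the two maps compose to the identity on solutions: $S_h[u_h]=u_h$ was shown in the forward step, and $[S_h\hat u_h]=\hat u_h$ holds by \eqref{eq:properties_Sh}. Hence the solution sets are in bijection, and each is a singleton exactly when the other is, which is the assertion of the lemma. The only genuinely delicate point is the uniqueness of the representation $u_h=S_h[u_h]$ in the forward direction, where coercivity over $\tilde V_{h0}^k$ together with Poincar\'e's inequality \eqref{eq:Poincare} is indispensable; everything else is bookkeeping with the lifting identities of Lemmas \ref{lemma:lift_L} and \ref{lemma:S_h} and the bilinearity of $a_h$.
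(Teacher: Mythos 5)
Your proof is correct and takes essentially the same route as the paper: the same two maps $u_h \mapsto [u_h]$ and $\hat u_h \mapsto S_h \hat u_h$, the same testing of \eqref{eq:semi_discrete} with $w_h \in \tilde V_h^k$ and with $L_h \hat\varphi$, the same splitting $v_h = L_h([v_h]) + (v_h - L_h([v_h]))$, and the same coercivity-plus-Poincar\'e argument establishing $u_h = S_h([u_h])$. The only difference is organizational: by packaging the argument as a bijection between solution sets you transfer existence and uniqueness simultaneously, which neatly subsumes the paper's separate two-case uniqueness analysis (jumps equal versus jumps distinct) into the single identity $S_h([u_h]) = u_h$.
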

\begin{proof}
(\textit{Existence of solutions}) Assume that \eqref{eq:semi_discrete} has a solution $u_h(t)$. Then, we show that  $\hat u_h = [u_h] \in \hat V_h^k$ solves \eqref{eq:problem_Gamma_time}. First, observe that $u_h = S_h([u_h])$. Indeed, by testing \eqref{eq:semi_discrete} with $w_h \in \tilde V_{h}^k$, it is easy to see that $u_h - L_h([u_h]) \in \tilde V_{h0}^k$ solves \eqref{eq:problem_tilde} for $\hat v = [u_h]$. By uniqueness, it follows that $u_h - L_h([u_h]) = \tilde u_{h,[u_h]}$ and therefore $u_h = \tilde u_{h,[u_h]} + L_h([u_h]) = S_h ([u_h])$. Thus, for a given $\hat \varphi \in \hat V_h^k$, test \eqref{eq:semi_discrete} with $L_h \hat \varphi \in V_h^k$ to see that $[u_h]$ solves \eqref{eq:problem_Gamma_time}.

Conversely, assume that $\eqref{eq:problem_Gamma_time}$  has a  solution $\hat u_h(t)$. Then, we show that $u_h = S_h \hat u_h$ solves \eqref{eq:semi_discrete}. First recall  that $\int_{\Omega_e} S_h \hat u_h = 0$. For a given $v_h \in V_h^k$, test \eqref{eq:problem_Gamma_time} with $[v_h]{}_{\vert_{\Gamma}} \in \hat V_h^k$. Then, with also using that $[u_h] = [S_h(\hat u_h)]  = \hat u_h$ on $\Gamma$, we have that 
\begin{equation}
\int_{\Omega} C_M \frac{\partial [u_h]}{\partial t}[v_h] + \int_{\Gamma } f_{\Gamma}([u_h]) [v_h] + a_h(u_h, L_h ([v_h]))  = \int_{\Omega} f L_h ([v_h]).  
\end{equation}
Testing \eqref{eq:problem_Gamma} with $w_h = v_h - L_h ([v_h])$ ($w_h \in \tilde V_h^k,$ since $[w_h] = 0$ on $\Gamma$) and adding it to the above  equation shows that $u_h = S_h \hat u_h$ solves \eqref{eq:semi_discrete}. 

(\textit{Uniqueness}) Assume that \eqref{eq:problem_Gamma_time} has a unique solution, and suppose that there are two different solutions $u_h^1, u_h^2 \in V_h^k$ to \eqref{eq:semi_discrete}. There are two cases to consider: (i) $[u_h^1 - u_h^2] = 0 $ on $\Gamma$, and (ii) $[u_h^1 - u_h^2] \ne 0 $ on $\Gamma$. If $[u_h^1 - u_h^2] = 0 $ on $\Gamma$, then by testing \eqref{eq:semi_discrete} by $w_h \in \tilde V_h^k$ for each solution and subtracting, we obtain 
 \begin{equation*}
     a_h(u_h^1 - u_h^2, w_h) = 0, \quad \forall w_h \in \tilde{V}_h^k. 
 \end{equation*}
 From the coercivity property \eqref{eq:coercivity}  and Poincar\'e's inequality \eqref{eq:Poincare} (since $u_h^1 - u_h^2 \in \tilde{V}_{h0}^{k}$), it follows that $\|u_h^1 - u_h^2\|_{L^2(\Omega)} = 0$ showing that the solution must be unique. If $[u_h^1] \neq [u_h^2]$, then, as shown above, $[u_h^1], [u_h^2] \in \hat{V}_h^k$ solve \eqref{eq:problem_Gamma_time} contradicting the uniqueness of \eqref{eq:problem_Gamma_time}. 

 Assume now that \eqref{eq:semi_discrete} has a unique solution, and suppose that there are two solutions $\hat u_h^1, \hat u_h^2 \in  V_h^k$ to \eqref{eq:problem_Gamma_time}. Then, as above, we can show that $S_h(\hat u_h^1) $ and $S_h (\hat u_h^2) $ solve \eqref{eq:semi_discrete}. By uniqueness of \eqref{eq:semi_discrete}, $S_h (\hat u_h^1) = S_h (\hat u_h^2)$. By Lemma \ref{lemma:S_h}, $\hat u_h^1 = [S_h(\hat u_h^1)] = [S_h(\hat u_h^2)] = \hat u_h^2$. Hence,  we conclude that the solutions to \eqref{eq:problem_Gamma_time} must be unique.
\end{proof}


\begin{theorem}[Existence and uniqueness of solutions]\label{thm:exist_unique}There exists a unique solution $u_h(t)$ to \eqref{eq:semi_discrete} with $[u_h](t) \in C^1([0,T];\hat V_h^k)$ and $[u_h](0) = \hat \pi_h \hat u^0. $
 \end{theorem}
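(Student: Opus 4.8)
The plan is to exploit \Cref{lemma:equivalance}: since problems \eqref{eq:problem_Gamma_time} and \eqref{eq:semi_discrete} possess unique solutions simultaneously, it suffices to establish existence and uniqueness for the reformulated problem \eqref{eq:problem_Gamma_time} posed on $\Gamma$. The decisive simplification is that $\hat V_h^k$ is finite-dimensional, so \eqref{eq:problem_Gamma_time} is nothing but a system of ordinary differential equations for the coefficients of $\hat u_h(t)$. I intend to bring it into the standard form $\tfrac{d}{dt}\hat u_h = \Phi(t, \hat u_h)$ and invoke the Cauchy--Lipschitz (Picard--Lindel\"of) theorem.

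First I would isolate the time-derivative term. Since $C_M>0$ is constant, the bilinear form $(\hat w, \hat\varphi) \mapsto \int_\Gamma C_M \hat w\,\hat\varphi$ is $C_M$ times the $L^2(\Gamma)$ Gram form on $\hat V_h^k$; it is therefore symmetric positive definite and induces an invertible mass matrix $M$ on $\hat V_h^k$. Applying $M^{-1}$ to \eqref{eq:problem_Gamma_time} defines the vector field $\Phi(t,\cdot):\hat V_h^k\to\hat V_h^k$ implicitly through
\begin{equation*}
\int_\Gamma C_M\,\Phi(t,\hat u_h)\,\hat\varphi = \int_\Omega f(t)\,L_h\hat\varphi - \int_\Gamma f_\Gamma(\hat u_h)\,\hat\varphi - a_h(S_h\hat u_h, L_h\hat\varphi), \quad \forall \hat\varphi\in\hat V_h^k,
\end{equation*}
so that \eqref{eq:problem_Gamma_time} reads $\tfrac{d}{dt}\hat u_h = \Phi(t,\hat u_h)$ with $\hat u_h(0)=\hat\pi_h\hat u^0$.

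The core of the argument, and the main technical step, is to show that $\Phi(t,\cdot)$ is globally Lipschitz in its second argument, uniformly in $t$. The forcing $\int_\Omega f(t)\,L_h\hat\varphi$ is independent of $\hat u_h$. The reaction term is handled by the hypothesis $f_\Gamma\in W^{1,\infty}(\mathbb R)$, which makes $f_\Gamma$ globally Lipschitz, whence $\bigl|\int_\Gamma (f_\Gamma(g_1)-f_\Gamma(g_2))\,\hat\varphi\bigr| \le \|f_\Gamma'\|_{L^\infty}\|g_1-g_2\|_{L^2(\Gamma)}\|\hat\varphi\|_{L^2(\Gamma)}$. For the stiffness term I would use that $S_h$ is affine, combine the continuity \eqref{eq:continuity} with the Lipschitz bound \eqref{eq:Lipschitz_Sh} to obtain $|a_h(S_h g_1 - S_h g_2, L_h\hat\varphi)| \lesssim |L_h(g_1-g_2)|_{\DG}\,|L_h\hat\varphi|_{\DG}$, and finally convert the $\DG$-seminorm of the lift into an $L^2(\Gamma)$ norm via an inverse inequality applied to \eqref{eq:property_Lh}, at the (fixed) price of a mesh-dependent constant. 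Since $M^{-1}$ is a bounded operator on the finite-dimensional space $\hat V_h^k$, these estimates yield a Lipschitz constant for $\Phi(t,\cdot)$ that is independent of $t$. Because this constant is global (the affine structure of $S_h$ and the $W^{1,\infty}$ bound on $f_\Gamma$ preclude finite-time blow-up), Cauchy--Lipschitz delivers a unique solution on the whole interval $[0,T]$, and uniqueness for \eqref{eq:semi_discrete} then follows from \Cref{lemma:equivalance}.

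The subtle point, and the place I would be most careful, concerns the regularity in time and hence the claimed membership $[u_h]\in C^1([0,T];\hat V_h^k)$. The only genuinely time-dependent ingredient in $\Phi$ is $t\mapsto\int_\Omega f(t)\,L_h\hat\varphi$. If $t\mapsto f(t)$ is continuous into $L^2$, then $\Phi$ is continuous in $t$ and Lipschitz in $\hat u_h$, so Picard--Lindel\"of yields a genuine $C^1$ solution satisfying $\hat u_h(0)=\hat\pi_h\hat u^0$ exactly. For data that is merely $L^2$ in time, one instead works in the Carath\'eodory framework, obtaining an absolutely continuous (indeed $H^1(0,T;\hat V_h^k)$) solution satisfying the equation for a.e.\ $t$; the $C^1$ statement should then be read under continuity of the data in time. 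Transporting the solution back via $u_h = S_h\hat u_h$ and using \eqref{eq:properties_Sh} recovers the asserted solution of \eqref{eq:semi_discrete} with $[u_h]=\hat u_h$.
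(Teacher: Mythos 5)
Your proposal follows essentially the same route as the paper's proof: reduce to the interface problem \eqref{eq:problem_Gamma_time} via \Cref{lemma:equivalance}, recast it as a finite-dimensional ODE through the invertible $L^2(\Gamma)$ mass form, verify a global, $t$-uniform (but $h$-dependent) Lipschitz bound using \eqref{eq:Lipschitz_Sh}, \eqref{eq:continuity}, an inverse estimate combined with \eqref{eq:property_Lh}, and the $W^{1,\infty}$ assumption on $f_\Gamma$, and conclude with the Cauchy--Lipschitz theorem. Your closing caveat --- that with $f$ only in $L^2(0,T;L^2(\Omega))$ the vector field is merely integrable in $t$, so one obtains a Carath\'eodory (absolutely continuous) solution satisfying the equation a.e.\ rather than a genuinely $C^1$ one unless $t\mapsto f(t)$ is continuous --- is a legitimate refinement of a point the paper passes over silently when it asserts $[u_h]\in C^1([0,T];\hat V_h^k)$.
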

 \begin{proof}
From \Cref{lemma:equivalance}, it suffices to show that problem \eqref{eq:problem_Gamma_time} has a unique solution. We look for a solution of the form 
\begin{equation}
\hat u_{h, \alpha} (t,\bm x) = \sum_{F \in \mathcal{F}_{\Gamma,h} } \sum_{j=1}^{k+1} \tilde{\alpha}_i^F (t) \tilde \phi_{i}^F(\bm x)  = \sum_{i=1}^{N} \alpha_i(t) \phi_i (\bm x),    \label{eq:looking_for_sol}
\end{equation}
where $\{\tilde \phi_i^F\}$ are local basis functions on each face $F \in \mathcal{F}_{\Gamma,h}$ extended by $0$ to the remaining faces. The second sum is a renaming of the basis functions and coefficients to simplify notation. The vector $ \alpha (t) = (\alpha_1(t), \ldots, \alpha_{N}(t))$ solves the following ODE: 
\begin{equation}
M_{\Gamma} \frac{d}{dt} \alpha(t) = F( \alpha(t)), \quad  t \in (0,T) ,  
\end{equation}
with $\bm \alpha(0)$ being the coefficients of the discrete function $\hat \pi_h \hat u^0.$  Here, we set 
\begin{alignat}{2}
   (M_{\Gamma})_{i,j} & = C_M \int_{\Gamma } \phi_i \phi_j, &&  \quad 1 \leq i,j \leq N, \\ 
  ( F( \alpha (t) ))_{i} & = \int_{\Omega} f L_h \phi_i - \int_{\Gamma} f_{\Gamma} (\hat u_{h,\alpha}) \phi_i - a_h  ( S_h \hat u_{h,\alpha} , L_h \phi_i  ), && \quad 1 \leq i \leq N. 
\end{alignat}
Since the mass matrix $M_{\Gamma}$ is invertible, the existence and uniqueness of solutions for the above ODE follows from an application of the Cauchy-Lipschitz theorem, see~\cite[Theorem 3.8-1]{ciarlet2013linear}, after checking that $F$ is Lipschitz with a constant independent of $t$. To this end, note that for any $ \alpha(t), \beta(t) \in \mathbb{R}^N$ and $\phi_i$,  
\begin{align}
|a_h(S_h \hat u_{h,\alpha} , L_h \phi_i ) - a_h(S_h \hat u_{h,\beta} , L_h \phi_i ) | & = |a_h(S_h(\hat u_{h,\alpha}) - S_h(\hat u_{h,\beta}), L_h \phi_i)| \\ 
& \lesssim |S_h(\hat u_{h,\alpha})  - S_h(\hat u_{h,\beta} )|_{\DG} |L_h \phi_i|_{\DG} \nonumber \\ 
& \lesssim \gamma_h \|\hat u_{h,\alpha} - \hat u_{h,\beta}\|_{L^2(\Gamma)} \|\phi_i\|_{L^2(\Gamma)} 
.
\nonumber
\end{align}
In the above, we used \eqref{eq:Lipschitz_Sh} in \Cref{lemma:S_h}, inverse estimates, and \eqref{eq:property_Lh}
\footnote{ 
Denote by $\mathcal{T}_{\Gamma}$ the collection of elements intersecting the interface $\Gamma$ and $\Omega_i$. For any $g \in \hat V_h^k$, we use the definition of $L_h g$ along with an inverse estimate and \eqref{eq:property_Lh} to estimate 
\begin{equation*}
|L_h g|_{\DG}^2 = \sum_{K \in\mathcal{T}_{\Gamma}} \|\nabla L_h g\|^2_{L^2(K)} \lesssim \sum_{K \in \mathcal{T}_\Gamma} h_{K}^{-2} \|L_h g\|_{L^2(K)}^2  \lesssim  \sum_{F \in \mathcal{F}_\Gamma}  h_{K}^{-1} \| g\|_{L^2(F)}^2 \lesssim (\max_{K \in \mathcal{T}_{\Gamma}} h_{K}^{-1}) \|g\|_{L^2(\Gamma)}^2. 
\end{equation*}
}. Thus, the constant $\gamma_h$ depends on $h$. Further, since $f_{\Gamma}$ is Lipschitz it follows that 
\begin{equation}
\int_{\Gamma} ( f_{\Gamma} (\hat u_{h,\alpha}) - f_{\Gamma}(\hat u_{h,\beta}) )  \phi_i \lesssim  \|\hat u_{h,\alpha} - \hat u_{h,\beta} \|_{L^2(\Gamma) } \| \phi_i\|_{L^2(\Gamma)}
.
\end{equation}
From here, one can see that 
\begin{equation}
   \| F(\alpha(t) ) - F(\beta(t))\|_{\ell^2} \leq C \|\alpha(t) - \beta(t)\|_{\ell^2}, \quad \forall t \in [0,T] , \,\,\, \forall \alpha,\beta \in \mathbb{R}^N. 
\end{equation}
The constant $C$ depends on $h$ but is independent of $t$. Thus, an application of~\cite[Theorem 3.8-1]{ciarlet2013linear} shows that a unique solution $\alpha(t) \in C^1([0,T], \mathbb{R}^N)$ exists, implying that a unique solution to  \eqref{eq:semi_discrete} with $[u_h](t) \in C^1([0,T];\hat V_h^k)$ and $[u_h](0) = \hat \pi_h \hat u^0$ exists.
\end{proof}

\section{Stability} \label{sec:stability} We show that the semi-discrete formulation is stable in the DG semi norm \eqref{eq:dgseminorm}. The main result is given in \Cref{thm:stability} which is based on establishing a discrete Poincar\'e inequality over $V_h^k$ in \Cref{lemma:poincare_discrete}. In this Lemma, averaging operators and a Poincar\'e inequality over $W$ (\Cref{lemma:poincare_W}) are utilized. 
\begin{lemma}[Poincar\'e's inequality over $W$]  \label{lemma:poincare_W} Recall the definition of $W$ in \eqref{eq:def_W}. For any $w \in W$, we have that 
\begin{equation}
\|w\|_{L^2(\Omega)} \lesssim \|\nabla w\|_{L^2(\Omega_i)} + \|\nabla w\|_{L^2(\Omega_e)} + \|[w]\|_{L^2(\Gamma)}.
\end{equation}
\end{lemma}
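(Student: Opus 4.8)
The plan is to treat the two restrictions $w_i := w|_{\Omega_i}$ and $w_e := w|_{\Omega_e}$ separately and then recombine, since $\|w\|_{L^2(\Omega)}^2 = \|w_i\|_{L^2(\Omega_i)}^2 + \|w_e\|_{L^2(\Omega_e)}^2$. The extracellular part is the easy one: because $\int_{\Omega_e} w_e = 0$ by the very definition of $W$ and $\Omega_e$ is a connected Lipschitz domain, the standard Poincar\'e--Wirtinger inequality gives $\|w_e\|_{L^2(\Omega_e)} \lesssim \|\nabla w_e\|_{L^2(\Omega_e)}$, with a constant depending only on $\Omega_e$. The real difficulty lies entirely in the intracellular part $w_i$, for which there is no zero-mean constraint; the only information tying $w_i$ to the right-hand side is the jump $[w] = w_i - w_e$ on $\Gamma$.

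To handle $w_i$, I would split it into its mean and fluctuation, $w_i = (w_i - \bar w_i) + \bar w_i$ with $\bar w_i = |\Omega_i|^{-1}\int_{\Omega_i} w_i$. Poincar\'e--Wirtinger on the connected domain $\Omega_i$ controls the fluctuation, $\|w_i - \bar w_i\|_{L^2(\Omega_i)} \lesssim \|\nabla w_i\|_{L^2(\Omega_i)}$, so it remains only to bound the constant $\bar w_i$. For this I would pass to the interface: integrating the identity $w_i = [w] + w_e$ over $\Gamma$ and inserting $w_i = (w_i - \bar w_i) + \bar w_i$ yields
\[
|\Gamma|\, \bar w_i = \int_\Gamma [w] + \int_\Gamma w_e - \int_\Gamma (w_i - \bar w_i).
\]
Each term on the right is then estimated by Cauchy--Schwarz on $\Gamma$ together with the trace inequality. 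The jump term is directly bounded by $|\Gamma|^{1/2}\|[w]\|_{L^2(\Gamma)}$. For the other two, the trace theorem gives $\|w_e\|_{L^2(\Gamma)} \lesssim \|w_e\|_{H^1(\Omega_e)}$ and $\|w_i - \bar w_i\|_{L^2(\Gamma)} \lesssim \|w_i - \bar w_i\|_{H^1(\Omega_i)}$, and the full $H^1$ norms collapse to the gradient norms upon reusing the two Poincar\'e--Wirtinger estimates already obtained. This produces $|\bar w_i| \lesssim \|\nabla w_i\|_{L^2(\Omega_i)} + \|\nabla w_e\|_{L^2(\Omega_e)} + \|[w]\|_{L^2(\Gamma)}$, with all geometric constants such as $|\Gamma|$ and $|\Omega_i|$ fixed and absorbed.

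Combining the pieces, $\|w_i\|_{L^2(\Omega_i)} \leq \|w_i - \bar w_i\|_{L^2(\Omega_i)} + |\Omega_i|^{1/2}|\bar w_i|$ is controlled by the three desired quantities, and adding the extracellular estimate finishes the proof after taking square roots and using $(a+b+c)^2 \lesssim a^2+b^2+c^2$.

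I expect the main obstacle to be the bound on $\bar w_i$: because $\Omega_i$ carries no normalization, the constant mode of $w_i$ is visible only through its trace on $\Gamma$, so the argument must route that information through the jump $[w]$ and the (zero-mean, hence Poincar\'e-controlled) extracellular trace of $w_e$. The connectedness of both $\Omega_i$ and $\Omega_e$ is essential here, as it is precisely what makes both Poincar\'e--Wirtinger inequalities available; the trace constants and the volumes $|\Gamma|, |\Omega_i|, |\Omega_e|$ must all be swept into the hidden constant, which is legitimate since the estimate is asserted on the fixed continuous domains and not uniformly in the mesh parameter $h$.
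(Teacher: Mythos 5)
Your proof is correct, and it takes a somewhat different route than the paper's. The paper handles the intracellular part by citing a ready-made Poincar\'e inequality with a boundary term (Ern--Guermond, Lemma B.63), namely $\|w\|_{L^2(\Omega_i)} \lesssim \|\nabla w\|_{L^2(\Omega_i)} + \|w_{\vert\Omega_i}\|_{L^2(\Gamma)}$, and then trades the trace $w_{\vert\Omega_i} = [w] + w_{\vert\Omega_e}$ on $\Gamma$ for the jump plus the extracellular trace via the triangle and trace inequalities; the extracellular part is finished by Poincar\'e--Wirtinger exactly as you do. You instead avoid the cited lemma entirely: your mean--fluctuation split $w_i = (w_i - \bar w_i) + \bar w_i$, with the mean pinned down by integrating the identity $w_i = [w] + w_e$ over $\Gamma$ and estimating each term by Cauchy--Schwarz, the trace theorem, and the two Poincar\'e--Wirtinger inequalities, is in effect a self-contained proof of that boundary-term Poincar\'e inequality for this geometry. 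Both arguments hinge on the same structural point you correctly identify as the crux: the constant mode of $w_i$ is only visible through $\Gamma$, so it must be routed through the jump and the zero-mean extracellular function. The paper's version is shorter because of the citation; yours is more elementary and transparent, using only the standard zero-mean Poincar\'e inequality and the trace theorem, at the cost of a slightly longer computation. Both require the same geometric hypotheses (connected Lipschitz subdomains, $|\Gamma|>0$), and your bookkeeping of the hidden constants is sound.
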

\begin{proof}
We use the following Poincar\'e inequality, see e.g.~\cite[Lemma B.63]{ern2004theory} 
\begin{equation}
\|w\|_{L^2(\Omega_i) } \lesssim  \|\nabla w\|_{H^1(\Omega_i)} + \|w _{\vert \Omega_i} \|_{L^2(\Gamma)}.
\end{equation}
For the last term above, we use triangle and trace inequalities to obtain
\begin{equation}
\|w\|_{L^2(\Omega_i) } \lesssim  \|\nabla w\|_{H^1(\Omega_i)} + \|[w]\|_{L^2(\Gamma)} +\|w_{\vert \Omega_e} \|_{H^1(\Omega_e)}. \label{eq:poin_W_0}
\end{equation}
On $\Omega_e$, since $ \int_{\Omega_e} w = 0$, we have that 
\begin{equation}
\|w\|_{L^2(\Omega_e)} \lesssim \|\nabla w\|_{L^2(\Omega_e)}.  \label{eq:poin_W_1}
\end{equation}
Using the above in \eqref{eq:poin_W_0} and adding the resulting bound to \eqref{eq:poin_W_1} yields the result. 
\end{proof}
\begin{lemma}[Poincar\'e's inequality over $V_h^k$]\label{lemma:poincare_discrete} For any $u_h \in V_h^k \,\, \mathrm{with } \,\, \int_{\Omega_e} u_h = 0 $, we have that 
\begin{equation} \label{eq:poincare_discrete_global}
\|u_h\|^2_{L^2(\Omega)} \leq C_p \left( |u_h|^2_{\DG} +  \|[u_h]\|^2_{L^2(\Gamma)} \right). 
\end{equation}
\end{lemma}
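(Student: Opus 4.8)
The plan is to compare $u_h$ with a conforming-by-subdomain averaging (Oswald) interpolant and then invoke the continuous Poincar\'e inequality of \Cref{lemma:poincare_W}. I would introduce an averaging operator $\mathcal{A}_h : V_h^k \to H^1(\Omega_i \cup \Omega_e)$, with target space as in \eqref{eq:broken_H1}, constructed in the standard way: at each Lagrange node the value of $\mathcal{A}_h u_h$ is the arithmetic mean of the element values of $u_h$ over the elements sharing that node, with the crucial modification that this averaging is carried out \emph{separately} within $\mathcal{T}_{h,i}$ and within $\mathcal{T}_{h,e}$, so that no averaging occurs across $\Gamma$. The resulting function is continuous (hence $H^1$) on each of $\Omega_i$ and $\Omega_e$ but keeps a genuine jump on $\Gamma$; this matches the structure of $|\cdot|_{\DG}$, whose jump penalty skips $\mathcal{F}_{\Gamma,h}$.

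The two workhorse estimates are the classical ones for such operators,
\begin{equation*}
\sum_{K \in \mathcal{T}_h} \|u_h - \mathcal{A}_h u_h\|^2_{L^2(K)} \lesssim \sum_{F \in \mathcal{F}_h \setminus \mathcal{F}_{\Gamma,h}} h_F \|[u_h]\|^2_{L^2(F)}, \qquad \sum_{K \in \mathcal{T}_h} \|\nabla(u_h - \mathcal{A}_h u_h)\|^2_{L^2(K)} \lesssim \sum_{F \in \mathcal{F}_h \setminus \mathcal{F}_{\Gamma,h}} \tfrac{1}{h_F}\|[u_h]\|^2_{L^2(F)}.
\end{equation*}
Because the sums run only over faces in $\mathcal{F}_h \setminus \mathcal{F}_{\Gamma,h}$, quasi-uniformity and $\gamma \geq 1$ give that both right-hand sides are $\lesssim |u_h|^2_{\DG}$. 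I would then handle the zero-mean constraint by setting $c = |\Omega_e|^{-1}\int_{\Omega_e}\mathcal{A}_h u_h$ and $w := \mathcal{A}_h u_h - c$, so that $w \in W$; since $\int_{\Omega_e} u_h = 0$, the shift is small, $|c| \lesssim |\Omega_e|^{-1/2}\|u_h - \mathcal{A}_h u_h\|_{L^2(\Omega_e)} \lesssim |u_h|_{\DG}$. A triangle inequality yields $\|u_h\|_{L^2(\Omega)} \leq \|u_h - \mathcal{A}_h u_h\|_{L^2(\Omega)} + \|w\|_{L^2(\Omega)} + |c|\,|\Omega|^{1/2}$, where the first and third terms are already controlled by $|u_h|_{\DG}$.

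It then remains to bound $\|w\|_{L^2(\Omega)}$ through \Cref{lemma:poincare_W}, for which I must control $\|\nabla \mathcal{A}_h u_h\|_{L^2(\Omega_i)} + \|\nabla \mathcal{A}_h u_h\|_{L^2(\Omega_e)}$ and $\|[\mathcal{A}_h u_h]\|_{L^2(\Gamma)}$, since the additive constant drops out of both gradients and jumps. The gradient terms are immediate: $\|\nabla \mathcal{A}_h u_h\| \leq \|\nabla(u_h - \mathcal{A}_h u_h)\| + \|\nabla u_h\| \lesssim |u_h|_{\DG}$, using the second averaging estimate together with the definition of the seminorm.

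The delicate point, and the step I expect to be the \textbf{main obstacle}, is the interface jump term, precisely because $\mathcal{A}_h u_h$ is deliberately double-valued on $\Gamma$ and the intended jump must be separated from the averaging error. On each $F \subset \Gamma$ shared by $K_i \in \mathcal{T}_{h,i}$ and $K_e \in \mathcal{T}_{h,e}$ I would split
\begin{equation*}
[\mathcal{A}_h u_h]\big|_F = (\mathcal{A}_h u_h - u_h)\big|_{K_i} + [u_h]\big|_F + (u_h - \mathcal{A}_h u_h)\big|_{K_e}.
\end{equation*}
The middle term sums to $\|[u_h]\|_{L^2(\Gamma)}$, exactly the term permitted on the right-hand side of the claim. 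For each averaging-error trace term I would pass from the face to its element via a discrete trace inequality and an inverse estimate, $\|u_h - \mathcal{A}_h u_h\|^2_{L^2(F)} \lesssim h_{K_i}^{-1}\|u_h - \mathcal{A}_h u_h\|^2_{L^2(K_i)}$, and then invoke the local $L^2$ averaging estimate on $K_i$. The key observation making this work is that the averaging on $K_i$ involves jumps only over faces interior to $\Omega_i$, none of which lie in $\mathcal{F}_{\Gamma,h}$, so these contributions are controlled by $|u_h|_{\DG}$; this gives $\|[\mathcal{A}_h u_h]\|_{L^2(\Gamma)} \lesssim \|[u_h]\|_{L^2(\Gamma)} + |u_h|_{\DG}$. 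Collecting all the bounds and squaring then yields the stated inequality.
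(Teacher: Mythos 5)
Your proposal is correct and follows essentially the same route as the paper's proof: a subdomain-wise averaging (Oswald-type) operator that never averages across $\Gamma$, its standard $L^2$ and broken-$H^1$ error bounds in terms of jumps over $\mathcal{F}_h \setminus \mathcal{F}_{\Gamma,h}$, a constant shift to land in $W$, the continuous Poincar\'e inequality of \Cref{lemma:poincare_W}, and a discrete trace argument to control the interface jump of the averaged function by $\|[u_h]\|_{L^2(\Gamma)} + |u_h|_{\DG}$. The only cosmetic differences (subtracting the mean globally rather than only on $\Omega_e$, and applying the trace inequality face-by-face rather than globally) do not change the argument.
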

\begin{proof}
The proof utilizes an averaging operator similar to the one constructed in ~\cite{doi:10.1137/S0036142902405217} and ~\cite[Section 5]{cangiani2018adaptive}. In particular, we construct an operator $E_h: V_h^k \rightarrow H^1(\Omega_i) \cup H^1(\Omega_e)$ as follows.
Let $E_h^i: V^k_{h,i} \rightarrow V^1_{h,i} \cap H^1(\Omega_i)$ be the local averaging map defined over $\mathcal{T}_{h,i}$ and let $E_h^e: V^k_{h,e} \rightarrow V^1_{h,e} \cap H^1(\Omega_e)$ be the local averaging  map defined over $\mathcal{T}_{h,e}$. We refer to ~\cite[Section 4]{ern2017finite} for a precise definition.  Then, we define the map $E$ as $E v_h \vert_K = E_h^{i} v_h \vert_K $  for $K \in \mathcal T_{h,i} $ and  $E  v_h \vert_K = E_h^{e} v_h \vert_K $  for $K \in \mathcal T_{h,e}$. Further, from ~\cite[Lemma 4.3]{ern2017finite}, we have that
\begin{align}
\sum_{K \in \mathcal{T}_h }\|\nabla (u_h - E u_h) \|_{L^2(K)}^2  &\lesssim \sum_{F \in \mathcal{F}_h \backslash \mathcal F_{\Gamma, h}} \frac{1}{h_F }\|[u_h]\|_{L^2(F)}^2, \label{eq:enriching_map} \\ 
\|u_h - E u_h \|_{L^2(\Omega)}^2  & \lesssim h  ^2 \sum_{F \in \mathcal{F}_h \backslash \mathcal F_{\Gamma, h} } \frac{1}{h_F} \|[u_h]\|_{L^2(F)}^2. \label{eq:enriching_map_1} 
\end{align}
For $E u_h$,  we let $\tilde{E} u_h= E u _h - \langle E u_h \rangle $ in $\Omega_e$ and $\tilde E u_h = Eu_h$ in $\Omega_i$ where $\langle \cdot \rangle$ denotes the average operator over $\Omega_e$. Using the triangle inequality and the fact that $\langle u_h \rangle = 0$, we obtain 
\begin{equation}
\|E u_h\|_{L^2(\Omega)} \leq \| \tilde E u_h \|_{L^2(\Omega_e)} + \|\langle E u_h - u_h \rangle \|_{L^2(\Omega_e) } + \|\tilde E u_h\|_{L^2(\Omega_i)}.  
 \end{equation} 
 With \Cref{lemma:poincare_W} and the triangle inequality, we estimate 
\begin{multline}
\| \tilde E u_h\|_{L^2(\Omega)}   \lesssim \|\nabla  (E u_h) \|_{L^2(\Omega_i)} + \|\nabla (Eu_h)\|_{L^2(\Omega_e)}    +\|[u_h]\|_{L^2(\Gamma)}+ \|[u_h - \tilde E u_h  ]\|_{L^2(\Gamma)}. 
\end{multline}
For the last term above,  we invoke a discrete trace estimate locally  over each $F\in \mathcal{F}_{\Gamma,h}$ and sum over the faces, to obtain that  \begin{multline}
\|[u_h - \tilde E u_h]\|_{L^2(\Gamma)} \lesssim h^{-1/2} \|u_h - \tilde E u_h\|_{L^2(\Omega)} \\ \lesssim  h^{-1/2} (\|u_h -  E u_h\|_{L^2(\Omega)} + \|\langle u_h -  Eu_h \rangle\|_{L^2(\Omega_e)} ), 
\end{multline}
where we again use that $\langle u_h \rangle = 0$. By noting that $\|\langle u - E u_h \rangle \|_{L^2(\Omega_e)} \leq \| u - E u_h \|_{L^2(\Omega_e)}$, \eqref{eq:enriching_map}--\eqref{eq:enriching_map_1} and the above bounds yield
\begin{align}
\|E u_h \|_{L^2(\Omega)} \lesssim |u_h|_{\DG} + \|[u_h]\|_{L^2(\Gamma)}.  
\end{align}
Estimate \eqref{eq:poincare_discrete_global} follows by an application of the triangle inequality and  \eqref{eq:enriching_map_1}.
\end{proof}

\begin{theorem}[Stability] \label{thm:stability} Let $u_h \in V_h^k $ with $\int_{\Omega_e} u_h = 0$ solve \eqref{eq:semi_discrete}. Then, there exists a constant $C_T$  depending on $T$ but not on $h$ such that for any $t \in [0,T]$, 
\begin{equation}
\|[u_h](t)\|_{L^2(\Gamma)}^2 + \int_0^T |u_h|^2_{\DG} \leq C_T (   \|\hat u^0\|_{L^2(\Gamma)}^2 + \int_{0}^T (\|f\|^2 _{L^2(\Omega)} +  \|f_{\Gamma}(0)\|_{L^2(\Gamma)}^2) ). 
\end{equation}

\end{theorem}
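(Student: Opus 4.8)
The statement is a standard energy estimate, and the natural route is to test the semi-discrete equation against the solution itself and close with Gr\"onwall's inequality. The only genuine complications are the nonlinear membrane term $f_\Gamma$ and the fact that the discrete Poincar\'e inequality of \Cref{lemma:poincare_discrete} reintroduces the interface jump $\|[u_h]\|_{L^2(\Gamma)}^2$ into the control of $\|u_h\|_{L^2(\Omega)}$.

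First I would choose $v_h = u_h$ in \eqref{eq:semi_discrete}. The time-derivative term then becomes an exact derivative,
\begin{equation*}
\int_\Gamma C_M \frac{\partial [u_h]}{\partial t}[u_h] = \frac{C_M}{2}\frac{d}{dt}\|[u_h]\|_{L^2(\Gamma)}^2,
\end{equation*}
and coercivity \eqref{eq:coercivity} gives $a_h(u_h,u_h)\geq C_{\mathrm{coerc}}|u_h|_{\DG}^2$. This produces a differential inequality of the form
\begin{equation*}
\frac{C_M}{2}\frac{d}{dt}\|[u_h]\|_{L^2(\Gamma)}^2 + C_{\mathrm{coerc}}|u_h|_{\DG}^2 \leq \int_\Omega f u_h - \int_\Gamma f_\Gamma([u_h])[u_h].
\end{equation*}

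Next I would estimate the right-hand side. For the membrane term I would exploit $f_\Gamma\in W^{1,\infty}(\mathbb{R})$: writing $f_\Gamma([u_h]) = f_\Gamma(0) + (f_\Gamma([u_h])-f_\Gamma(0))$ and using the Lipschitz bound $|f_\Gamma([u_h])-f_\Gamma(0)|\lesssim |[u_h]|$ yields
\begin{equation*}
\Big|\int_\Gamma f_\Gamma([u_h])[u_h]\Big| \lesssim \|f_\Gamma(0)\|_{L^2(\Gamma)}\|[u_h]\|_{L^2(\Gamma)} + \|[u_h]\|_{L^2(\Gamma)}^2.
\end{equation*}
For the source term I would apply Cauchy--Schwarz and then the discrete Poincar\'e inequality \eqref{eq:poincare_discrete_global}, which is applicable since $\int_{\Omega_e}u_h=0$, to get $\int_\Omega f u_h \leq \|f\|_{L^2(\Omega)}\sqrt{C_p}(|u_h|_{\DG}^2+\|[u_h]\|_{L^2(\Gamma)}^2)^{1/2}$. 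A Young's inequality then lets me absorb a fraction $\tfrac{C_{\mathrm{coerc}}}{2}|u_h|_{\DG}^2$ into the left-hand side, leaving a residual $\|[u_h]\|_{L^2(\Gamma)}^2$ together with the data terms $\|f\|_{L^2(\Omega)}^2$ and $\|f_\Gamma(0)\|_{L^2(\Gamma)}^2$ on the right. The outcome is
\begin{equation*}
\frac{C_M}{2}\frac{d}{dt}\|[u_h]\|_{L^2(\Gamma)}^2 + \frac{C_{\mathrm{coerc}}}{2}|u_h|_{\DG}^2 \leq C_1\|[u_h]\|_{L^2(\Gamma)}^2 + C_2\|f\|_{L^2(\Omega)}^2 + C_3\|f_\Gamma(0)\|_{L^2(\Gamma)}^2.
\end{equation*}

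Finally I would conclude in two steps. Dropping the nonnegative $|u_h|_{\DG}^2$ term and applying Gr\"onwall's inequality to $y(t)=\|[u_h](t)\|_{L^2(\Gamma)}^2$ bounds $\|[u_h](t)\|_{L^2(\Gamma)}^2$ by $e^{C_1 t}$ times the initial data plus the time integral of the source terms; here I would use the stability of the $L^2$-projection, $\|[u_h](0)\|_{L^2(\Gamma)}=\|\hat\pi_h\hat u^0\|_{L^2(\Gamma)}\leq \|\hat u^0\|_{L^2(\Gamma)}$. Then, integrating the full differential inequality over $(0,t)$ and inserting the Gr\"onwall bound to control $\int_0^t \|[u_h]\|_{L^2(\Gamma)}^2$ recovers the $\int_0^T |u_h|_{\DG}^2$ contribution, giving the claimed estimate with $C_T$ depending on $T$ through the exponential factor but independent of $h$. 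The step I expect to require the most care is the coupling of the interface jump through the discrete Poincar\'e inequality: because $\|[u_h]\|_{L^2(\Gamma)}^2$ is simultaneously the energy quantity differentiated in time and a term appearing when bounding $\|u_h\|_{L^2(\Omega)}$, it cannot be absorbed by coercivity and must instead be carried through Gr\"onwall, which is precisely what forces the $T$-dependence of the constant.
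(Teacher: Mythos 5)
Your proposal is correct and follows essentially the same route as the paper: testing with $v_h = u_h$, coercivity \eqref{eq:coercivity}, the Lipschitz splitting of $f_\Gamma$ about $0$, the discrete Poincar\'e inequality of \Cref{lemma:poincare_discrete} for the source term, Young's inequality, and a continuous Gr\"onwall argument with $L^2$-projection stability for the initial datum. Your two-step handling of Gr\"onwall (first bounding $\|[u_h](t)\|_{L^2(\Gamma)}^2$, then reinserting to recover $\int_0^T |u_h|_{\DG}^2$) merely spells out what the paper compresses into one sentence, and your closing observation about why the jump term forces the $T$-dependence of $C_T$ is exactly the mechanism at work.
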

\begin{proof}
Testing \eqref{eq:semi_discrete} with $u_h$ and using \eqref{eq:coercivity}, we obtain that 
\begin{equation}\nonumber
\frac{C_M}{2} \frac{d}{dt} \|[u_h]\|_{L^2(\Gamma)}^2 + C_{\mathrm{coerc}} |u_h|^2_{\DG} \leq \|f\|_{L^2(\Omega)} \|u_h\|_{L^2(\Omega)} + \|f_{\Gamma}([u_h])\|_{L^2(\Gamma)} \|[u_h]\|_{L^2(\Gamma)}. 
 \end{equation}
 Since $f_{\Gamma}$ is Lipschitz, we estimate 
 \begin{align} \label{eq:bound_f_Gamma}
    \|f_{\Gamma}([u_h])\|_{L^2(\Gamma)}  & \leq  \|f_{\Gamma}([u_h]) - f_{\Gamma}(0)\|_{L^2(\Gamma)} + \|f_{\Gamma}(0)\|_{L^2(\Gamma)}
    \\ & \lesssim 
    \nonumber 
    \|[u_h]\|_{L^2(\Gamma)} + \|f_{\Gamma}(0)\|_{L^2(\Gamma)}. 
 \end{align}
 With the above bound, \Cref{lemma:poincare_discrete}, and appropriate applications of Young's inequality, we obtain that  
 \begin{align}
     \frac{C_M}{2} \frac{d}{dt} \|[u_h]\|_{L^2(\Gamma)}^2 + \frac{C_{\mathrm{coerc}}}{2} |u_h|^2_{\DG} \lesssim \|f\|^2 _{L^2(\Omega)} + \|[u_h]\|_{L^2(\Gamma)}^2 +  \|f_{\Gamma}(0)\|_{L^2(\Gamma)}^2. 
 \end{align}
 Integrating the above over $[0,t]$ and using a continuous Gronwall's inequality and the stability of the $L^2$ projection for the initial condition yields the result.  
 \end{proof}

\section{Error analysis} \label{sec:convergence} In this section, we prove a priori error estimates for the semi-discrete formulation. 
Since the domains $\Omega_i$ and $\Omega_e$ have corner singularities, we work with the spaces defined for $s \in (0,1/2)$
\begin{align*} 
V^{1+s}(\Omega)  &= \{u  \in H^{1+s}(\Omega_i \cup \Omega_e):  \nabla \cdot(\kappa_i \nabla u \vert_{\Omega_i}) \in L^2(\Omega_i), \,\, \nabla \cdot(\kappa_e \nabla u \vert_{\Omega_e}) \in L^2(\Omega_e)\}, \\ 
V^{1+s}(\mathcal{T}_h) &  =  V^{1+s}(\Omega) + V_h^k, 
\end{align*}
where $ H^{1+s}(\Omega_i \cup \Omega_e)$ and $H^{1+s}(\mathcal{T}_h)$  denote the  broken Sobolev spaces of degree $1+s$ over $\Omega_i \cup \Omega_e$ and over  $\mathcal{T}_h$ respectively. 
The space $V^{1+s}(\mathcal{T}_h)$ was considered for DG methods for elliptic interface problems in ~\cite{cai2011discontinuous}. 

\textbf{Main difficulties and outline}. Due to the low regularity of the solution, strong consistency of the DG approximation cannot be expected. In particular, the exact solution cannot be used as an argument in the discrete bilinear form \eqref{eq:poisson_DGk} since the normal trace of the flux $\kappa \nabla u$ on mesh facets is not well defined. This precludes an a priori analysis based on C\'{e}a's lemma as was done in ~\cite{fokoue2023numerical}. However, following ~\cite{ern2021quasi}, weak meaning can be given to the normal trace of the flux through duality and this will allow us to perform our analysis using a modification of Strang's Second Lemma. From ~\cite{ern2021quasi}, we utilize the face-to-element lifting to show a notion of weak consistency given in \Cref{lemma:weak_consistency}. The main result is given in \Cref{thm:convergence} which requires an intermediate lemma on the consistency error given in \Cref{lemma:bounding_W3}.

\textit{The face-to-element lifting}.
Let $K \in \mathcal{T}_h$ be a mesh element, $\mathcal{F}_K$ the set of all faces of $K$, and $F \in \mathcal{F}_K$ be a face of $K$. For $F \in \mathcal{F}_h$, $\mathcal{T}_F$ is the set containing the two elements sharing $F$. First, we note that if $u \in H^{1+s}(K)$, then $\nabla u \in (H^s(K))^d$. Since $2s < 1 < d$, the Sobolev embedding theorem implies that $(H^s(K))^d \hookrightarrow (L^\rho(K))^d$ for all $2 \le \rho \le 2d/(d-2s)$ ~\cite[Theorem 2.31]{Ern:booki}. In particular, we choose $\rho = 2d/(d-2s) > 2 $  such that $\nabla u \in (L^\rho(K))^d$, and since $\kappa \in (L^\infty(K))^{d\times d}$, we conclude that $\kappa \nabla u \in (L^\rho(K))^d$. This also shows that if $u \in V^{1+s}(\Omega)$, then $u_{\vert_{\Omega_i}} \in W^{1,\rho}(\Omega_i)$ and  $u _{\vert_{\Omega_e}} \in W^{1,\rho}(\Omega_e)$.
Now, consider the lifting operator given in ~\cite[Lemma 3.1]{ern2021quasi}: 
\begin{equation}
L^K_F: W^{\frac{1}{\rho}, \rho'}(F) \rightarrow W^{1, \rho'}(K), \quad \gamma_{\partial K} ( L_F^K(\phi) ) = \begin{cases} \phi & \mathrm{on} \,\,\, F, 
\\ 
0 & \mathrm{otherwise}. 
\end{cases}
\end{equation}
Here $1/\rho' + 1/\rho = 1$ and  $\gamma_{\partial K}$ denotes the Dirichlet trace operator onto $\partial K$.  The normal trace of a function $\bm{\tau} $ in $H^s(\mathcal{T}_h)^d$ with $\nabla \cdot \bm \tau \in L^2(\Omega)$ is then defined as a functional on $W^{1/\rho, \rho'}(F)$: 
\begin{equation}
\langle ( \bm \tau \cdot \bm n_K)_{\vert F}, \phi\rangle_{F} := \int_K  (\bm \tau \cdot \nabla L_F^K(\phi) + (\nabla \cdot \bm \tau) L_F^K(\phi)  ).   \label{eq:def_duality_pair}
\end{equation}
The above is well--defined since $L_F^K \in W^{1,\rho'} (K) \hookrightarrow L^2(K)$ and $\bm \tau \in L^{\rho}(K)^d$. 
Following~\cite{ern2021quasi} with the necessary adaptation to our setting, we define for all $u \in V^{1+s}(\mesh)$ and all $w_h \in V_h^k$
\begin{equation}
n_\sharp(u, w_h) = \sum_{ F \in \mathcal{F}_h \backslash \mathcal{F}_{\Gamma,h}}  \sum_{K \in \mathcal{T}_F} \frac12  \epsilon_{K,F}  \langle (\bm \sigma(u)_{\vert K} \cdot \bm n_{K} )_{\vert {F}} , [w_h] \rangle_{F},  \label{eq:def_eta_sharp}
 \end{equation}
where $\epsilon_{K,F} = \bm n_{F} \cdot (\bm n_{K})_{\vert {F}}$ accounts for the orientation and $\bm \sigma(u) =  - \kappa_i \nabla u_i $ in $\Omega_i$ and $\bm \sigma(u) = -  \kappa_e \nabla u_e$ in $\Omega_e$.  The definition of $\eta_{\sharp}(\cdot, \cdot): V^{1+s}(\mathcal{T}_h) \times V_h^k \rightarrow \mathbb R$ allows us to have a notion of weak consistency in the following sense. 
\begin{lemma}[Weak Consistency] \label{lemma:weak_consistency}Let $u \in L^2(0, T ; V^{1+s}(\Omega) \cap W)$ with $\partial_t u \in L^2(0,T; H^{1/2+s}(\Omega_i \cup \Omega_e))$ for $s \in (0,1/2)$  solve \eqref{eq:EMI_weak}. Then, for a.e. in time, 
\begin{equation}
\int_{\Gamma}\left( C_M \frac{\partial [u]}{\partial t} + f_{\Gamma}([u]) \right)[w_h]   + \sum_{K \in \mesh} \int_K  \kappa \nabla u \cdot \nabla w_h   +  \eta_\sharp(u,w_h)   = \int_{\Omega} f w_h,  \,\,\, \forall w_h \in V_h^k. 
\end{equation}
\end{lemma}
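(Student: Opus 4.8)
The plan is to start from the weak formulation \eqref{eq:EMI_weak} and transfer it, element by element, onto the broken space $V_h^k$ by means of the duality-based normal trace \eqref{eq:def_duality_pair}. Since $u \in V^{1+s}(\Omega)$, testing \eqref{eq:EMI_weak} with functions in $C_c^\infty(\Omega_i)$ and in $C_c^\infty(\Omega_e)$ first shows that the strong equations $\nabla\!\cdot\bm\sigma(u) = f$ hold in $L^2(\Omega_i)$ and $L^2(\Omega_e)$, with $\bm\sigma(u) = -\kappa\nabla u$. Testing \eqref{eq:EMI_weak} with general $v \in H^1(\Omega_i\cup\Omega_e)$ and applying the $H(\mathrm{div})$ Green's formula on each subdomain (licit since $\bm\sigma(u)\in L^\rho$, $\rho>2$, pairs with $\nabla v \in L^2$, and $v_{\vert\Gamma}\in H^{1/2}(\Gamma)\hookrightarrow W^{1/\rho,\rho'}(\Gamma)$), I would read off, in the weak normal trace sense, the Neumann condition $\bm\sigma(u)\!\cdot\!\bm n_e = 0$ on $\Gamma_e$, the flux transmission $\bm\sigma(u)_{\vert\Omega_i}\!\cdot\!\bm n_i = \bm\sigma(u)_{\vert\Omega_e}\!\cdot\!\bm n_i$ on $\Gamma$, and the identity $\bm\sigma(u)_{\vert\Omega_i}\!\cdot\!\bm n_i = C_M\partial_t[u] + f_\Gamma([u])$ as an element of $L^2(\Gamma)$ (using $\partial_t u \in L^2(0,T;H^{1/2+s})$ so that the trace of $\partial_t[u]$ is in $L^2(\Gamma)$).

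Next I would integrate by parts element-wise. Fix $K\in\mesh$; using $\nabla\!\cdot\bm\sigma(u)=f$ and summing the definition \eqref{eq:def_duality_pair} over the faces $F\in\mathcal{F}_K$ with argument $w_h|_F$, the difference $w_h - \sum_{F\in\mathcal{F}_K} L_F^K(w_h|_F)$ has vanishing trace on $\partial K$, so the weak divergence pairing gives
\begin{equation*}
\int_K \kappa\nabla u\cdot\nabla w_h = \int_K f\, w_h - \sum_{F\in\mathcal{F}_K}\langle (\bm\sigma(u)_{\vert K}\cdot\bm n_K)_{\vert F}, w_h|_F\rangle_F .
\end{equation*}
Here I would use the essential fact that the functional \eqref{eq:def_duality_pair} is independent of the chosen lifting: two liftings of the same facet datum differ by an element of $W_0^{1,\rho'}(K)$, on which the pairing vanishes by the definition of the weak divergence. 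Because $w_h|_F$ is a polynomial, this licenses replacing $L_F^K$ by a smooth (polynomial) $H^1$ lifting, which is what reconciles the $W^{1,\rho'}$-valued liftings with the $H^1$ test space of \eqref{eq:EMI_weak}. Summing over $K$ yields $\sum_{K\in\mesh}\int_K\kappa\nabla u\cdot\nabla w_h = \int_\Omega f\,w_h - \mathcal{B}$, where $\mathcal{B}$ collects all facet pairings.

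It remains to evaluate $\mathcal{B}$, reorganized as $\sum_F\sum_{K\in\mathcal{T}_F}$. For an interior facet $F\in\mathcal{F}_h\setminus\mathcal{F}_{\Gamma,h}$ the two adjacent elements lie in the same subdomain, where $\bm\sigma(u)\in H(\mathrm{div})$; gluing their (polynomial) liftings into one $H^1$ function supported on $K_F^1\cup K_F^2$ shows the weak normal trace is single-valued, i.e. $\langle(\bm\sigma(u)\cdot\bm n_{K_F^2})_{\vert F},\psi\rangle_F = -\langle(\bm\sigma(u)\cdot\bm n_{K_F^1})_{\vert F},\psi\rangle_F$ for single-valued $\psi$. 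This collapses the two one-sided traces into a single jump and reproduces exactly the symmetrized term $\eta_\sharp(u,w_h)$. For $F\subset\Gamma_e$ the single facet term vanishes by the weak Neumann condition, and for $F\in\mathcal{F}_{\Gamma,h}$ the two one-sided traces, combined through the transmission condition, collapse to $\langle\bm\sigma(u)_{\vert\Omega_i}\cdot\bm n_i,[w_h]\rangle_F$, which by the dynamic condition equals $\int_F (C_M\partial_t[u]+f_\Gamma([u]))[w_h]$; summing over $\Gamma$-facets gives the membrane term. Substituting and rearranging then produces the claimed identity.

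The main obstacle is the mismatch in integrability caused by the corner singularities: the lifting operator $L_F^K$ only lands in $W^{1,\rho'}(K)$ with $\rho'<2$, so one cannot directly insert liftings into the weak formulation, whose natural test space is $H^1(\Omega_i\cup\Omega_e)$, nor invoke the classical normal-trace/Green's formula. The resolution, and the step requiring the most care, is the lifting-independence of \eqref{eq:def_duality_pair} together with the fact that the facet data $w_h|_F$ are polynomials admitting smooth $H^1$ liftings; this is precisely what allows the boundary and interface conditions extracted from \eqref{eq:EMI_weak} to be localized to individual facets and matched against the $L^2(\Gamma)$ membrane data.
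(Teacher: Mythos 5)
Your overall architecture is genuinely different from the paper's: you avoid mollification entirely and work directly at the level of the duality pairings (element-wise weak integration by parts, single-valuedness of the weak normal trace across interior facets, facet-by-facet reorganization), whereas the paper regularizes $\bm\sigma(u)$ with the Ern--Guermond mollifiers, applies the classical Green theorem to the smooth fields, and passes to the limit $\delta\to 0$. Several of your steps are sound as argued: the element-wise identity follows from the vanishing trace of $w_h-\sum_{F\in\mathcal{F}_K}L_F^K(w_h|_F)$ together with density of $C_c^\infty(K)$ in $W_0^{1,\rho'}(K)$ and the embedding $W_0^{1,\rho'}(K)\hookrightarrow L^2(K)$, and the interior-facet collapse onto $\eta_\sharp(u,w_h)$ follows from gluing the two liftings into a $W^{1,\rho'}_0$ function on the union of the two elements -- note that only $W^{1,\rho'}$ gluing is needed there, not $H^1$.

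There is, however, a genuine gap in the device you single out as ``the resolution'': replacing $L_F^K$ by a smooth (polynomial) $H^1$ lifting of the polynomial datum $w_h|_F$. No such lifting exists. A single-facet lifting must have boundary trace equal to $\phi$ on $F$ and $0$ on $\partial K\setminus F$; this trace jumps across $\partial F$, hence is not in $H^{1/2}(\partial K)$ and is therefore not the trace of any $H^1(K)$ function, smooth or not. (Concretely, a polynomial on a simplex vanishing on all faces but $F$ is divisible by the barycentric coordinates of those faces and so vanishes on $\partial F$; it can never lift $\phi\equiv 1$.) This impossibility is exactly the low-regularity obstruction that forces the $W^{1,\rho'}$, $\rho'<2$, framework of \cite{ern2021quasi}, so it cannot be what reconciles the liftings with the $H^1$ test space of \eqref{eq:EMI_weak}, and the step it is meant to justify --- identifying the element-level pairing $\langle (\bm\sigma(u)_{\vert K_e}\cdot\bm n_{K_e})_{\vert F},\phi\rangle_F$ on a facet $F\subset\Gamma$ or $F\subset\Gamma_e$ with the $L^2(\Gamma)$ interface data --- is left unproved. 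The correct repair is different: extend $L_F^{K_e}(\phi)$ by zero to a function in $W^{1,\rho'}(\Omega_e)$, approximate it by functions $w_m\in C^\infty(\overline{\Omega_e})$ in the $W^{1,\rho'}(\Omega_e)$ norm, test \eqref{eq:EMI_weak} with $w_m$, and pass to the limit; the limit passage works not because anything is smooth but because $C_M\partial_t[u]+f_\Gamma([u])\in L^{\rho}(\Gamma)$ (from $\partial_t u\in H^{1/2+s}(\Omega_i\cup\Omega_e)$, the trace theorem, and the Lipschitz bound on $f_\Gamma$) while $\gamma w_m$ converges in $L^{\rho'}(\Gamma)$. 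This is precisely the argument of \Cref{appendix:interface_cond} in the paper; with it substituted for your smooth-lifting claim, your mollification-free proof does go through.
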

\begin{proof}
The proof utilizes some arguments from ~\cite[Lemma 3.3]{ern2021quasi}.  In particular, we make use of the mollification operators constructed in ~\cite{ern2016mollification}: 
\begin{equation}
\mathcal K_{\delta,i}^d: L^1(\Omega_i)^d \rightarrow C^{\infty}(\overline{\Omega_i})^d,  \quad \mathcal K_{\delta,i}^b : L^1(\Omega_i) \rightarrow C^{\infty}(\overline{\Omega_i}), 
\end{equation}
which satisfy the commutativity property for all $ \bm \tau \in L^1(\Omega_i)^d$ with $\nabla \cdot \bm \tau \in L^1(\Omega_i)$
\begin{equation}
    \nabla \cdot \mathcal K_{\delta,i}^d (\bm \tau) = \mathcal K_{\delta,i}^b(\nabla \cdot \bm \tau). \label{eq:commutative_prop_moll} 
\end{equation} 
We similarly define $\mathcal K_{\delta,e}^d$ and  $\mathcal K_{\delta,e}^b$. We denote by $\mathcal K_{\delta}^d$ (resp.  $\mathcal K_{\delta}^b$) the operator which evaluates to either $\mathcal K_{\delta,i}^d$ or $\mathcal K_{\delta,e}^d$ (resp. $\mathcal K_{\delta,i}^b$ or $\mathcal K_{\delta,e}^b$) depending on the domain of the function. We then define 
\begin{equation} \label{eq:def_eta_sharp_delta}
    n_{\sharp,\delta}(u, w_h) = \sum_{F \in \mathcal{F}_h \backslash \mathcal{F}_{\Gamma,h}}  \sum_{K \in \mathcal{T}_F} \frac12  \epsilon_{K,F}  \langle (\mathcal K_{\delta}^d(\bm \sigma(u))_{\vert K} \cdot \bm n_{K} )_{\vert {F}} , [w_h] \rangle_{F}. 
\end{equation}
Following the same arguments in ~\cite[Lemma 3.3]{ern2021quasi}, we readily obtain that 
\begin{equation}
\lim_{\delta \rightarrow 0 }  n_{\sharp,\delta}(u, w_h) =  n_{\sharp}(u, w_h).  \label{eq:identity_n_sharp}
\end{equation}
The next step of the proof consists of showing that 
\begin{equation}
\lim_{\delta \rightarrow 0} \sum_{F \in \mathcal{F}_{\Gamma,h} \cup \Gamma_e} \int_F [w_h \mathcal K_{\delta}^d(\bm \sigma(u))] \cdot \bm n_F   = \int_{\Gamma}\left( C_M \frac{\partial [u]}{\partial t}[w_h] + f_{\Gamma}([u]) \right)[w_h].  \label{eq:interface_boundary_terms}
\end{equation}
The details of deriving \eqref{eq:interface_boundary_terms} are provided in \Cref{appendix:interface_cond}. 

The identities  \eqref{eq:identity_n_sharp} and \eqref{eq:interface_boundary_terms} yield 
\begin{multline}
    \lim_{\delta \rightarrow 0 }  \del[3]{ n_{\sharp,\delta}(u, w_h) + \sum_{F \in \mathcal{F}_{\Gamma,h} \cup \Gamma_e } \int_F [w_h \mathcal K_{\delta}^d(\bm \sigma(u))] \cdot \bm n_F  } \\  =  n_{\sharp}(u, w_h)+ \int_{\Gamma}\left( C_M \frac{\partial [u]}{\partial t} + f_{\Gamma}([u]) \right)[w_h].  \label{eq:consistency_lim_0} 
\end{multline} 
Further, since $\mathcal K_{\delta}^d(\bm \sigma (u))$ is smooth, we use the definition of the duality pairing \eqref{eq:def_duality_pair}, Green's theorem locally in the expression of \eqref{eq:def_eta_sharp_delta} and  the definition of $\epsilon_{K,F} = \bm n_F \cdot (\bm n_K)_{\vert_{F}}$. We  obtain  
\begin{align*}
n_{\sharp,\delta}(u, w_h) &= \sum_{F \in \mathcal{F}_h \backslash \mathcal{F}_{\Gamma,h}} \sum_{K \in \mathcal{T}_F}\int_{F} \frac12 \epsilon_{K,F} \mathcal{K}_{\delta}^d (\bm \sigma(u)) \cdot \bm n_K [w_h]  \\ & = \sum_{F \in \mathcal{F}_h \backslash \mathcal{F}_{\Gamma,h}} \int_{F} \{\mathcal{K}_{\delta}^d (\bm \sigma(u))\} \cdot \bm n_F [w_h].
\end{align*}
Since $\mathcal{K}_{\delta}^d(\bm  \sigma(u))$ is smooth when restricted to $\Omega_i$ or to $\Omega_e$, we have that $[\mathcal{K}_{\delta}^d (\bm \sigma(u))] \cdot \bm n_F = 0 $ on $F \in \mathcal{F}_h\backslash \mathcal{F}_{\Gamma,h}$. Therefore, 
\begin{multline*}
n_{\sharp,\delta}(u, w_h) + \sum_{F \in \mathcal{F}_{\Gamma,h} \cup \Gamma_e } \int_F [w_h \mathcal K_{\delta}^d(\bm \sigma(u))] \cdot \bm n_F  \\  =   \sum_{F \in \mathcal{F}_h \cup \Gamma_e} \int_F [w_h \mathcal K_{\delta}^d(\bm \sigma(u))] \cdot \bm n_F   = \sum_{K \in \mathcal{T}_h} \int_{\partial K} \mathcal K_{\delta}^d(\bm \sigma(u)) \cdot \bm{n}_K w_h{}_{\vert K}. 
\end{multline*}
Using Green's theorem locally in each element and the commutativity properties of the operators  $\mathcal{K}_{\delta}^d$ and $\mathcal{K}_{\delta}^b$ \cite{ern2016mollification}, we find that 
\begin{multline}
n_{\sharp,\delta}(u, w_h) + \sum_{F \in \mathcal{F}_{\Gamma,h} \cup \Gamma_e} \int_F [w_h \mathcal K_{\delta}^d(\bm \sigma(u))] \cdot \bm n_F  \\ = \sum_{K \in \mesh} \int_K (\mathcal{K}_{\delta}^d(\bm \sigma(u)) \cdot \nabla w_h + \mathcal{K}_{\delta}^b(\nabla \cdot \bm \sigma (u)) w_h). 
\end{multline}
Passing to the limit as $\delta \to 0$ and  using \eqref{eq:consistency_lim_0} along with the convergence properties of  $\mathcal{K}_{\delta}^d$ and $\mathcal{K}_{\delta}^b$ yields 
\begin{multline}
  \int_{\Gamma}\left( C_M \frac{\partial [u]}{\partial t}[w_h] + f_{\Gamma}([u]) \right)[w_h] +   n_{\sharp}(u, w_h) \\ = \sum_{K \in \mesh} \int_K \bm \sigma(u) \cdot \nabla w_h  + \sum_{K \in \mesh} \int_{K} \nabla \cdot (\bm \sigma (u) ) w_h. 
\end{multline}
Using that $\nabla \cdot ( \bm \sigma (u) ) = f $ in $L^2(\Omega)$ since  $u$ solves \eqref{eq:EMI} concludes the proof.
\end{proof}
To proceed in the error analysis, we estimate the consistency error. Consider the Scott-Zhang interpolant~\cite{scott1990finite} operators  $\mathcal{I}_h^i: H^1(\Omega_i) \rightarrow V^1_{h,i} \cap H^1(\Omega_i)$  defined over $\mathcal{T}_{h, i} = \mathcal{T}_h \cap \Omega_i$ and  $\mathcal{I}_h^e: H^1(\Omega_e) \rightarrow V^1_{h,e} \cap H^1(\Omega_e)$ defined over $\mathcal{T}_{h,e} = \mathcal{T}_h \cap \Omega_e$. We refer to~\cite{ciarlet2013analysis,scott1990finite} for the estimates in fractional Sobolev spaces used hereinafter.  For $u \in H^1(\Omega_i \cup \Omega_e)$, we define $\mathcal{I}_h u  = \mathcal{I}_{h,i} (u{}_{ \vert_{\Omega_i}})$ in $\Omega_i$ and $\mathcal I_h u = \mathcal{I}_{h,e} (u{}_{\vert_{\Omega_e}})$ in $\Omega_e$. Clearly, $\mathcal I_h u \in H^1(\Omega_i \cup \Omega_e) \cap V_h^1$.  
\begin{lemma}[Consistency error] \label{lemma:bounding_W3}
Let $u \in V^{1+s}(\Omega) \cap W$. For any $\eta_h \in V_h^k$, we have the bound 
\begin{multline}
    \del[3]{\eta_\sharp (u, \eta_h) +  \sum_{K \in \mesh} \int_K  \kappa \nabla u \cdot  \nabla \eta_h - a_h(\mathcal{I}_h u , \eta_h) } \\    \lesssim h^s  (\|u\|_{H^{1+s}(\Omega_i \cup \Omega_e)} + h^{1-s} \|f\|_{L^2(\Omega)} )\,\, |\eta_h|_{\DG} .
\label{eq:bound_W3}
\end{multline}
\end{lemma}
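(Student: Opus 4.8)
The plan is to exploit the $H^1$-conformity of the Scott–Zhang interpolant \emph{inside each subdomain} to collapse the left-hand side of \eqref{eq:bound_W3} to a single consistency functional acting on the interpolation error, and then to estimate that functional with the face-to-element lifting bounds of \cite{ern2021quasi}. First I would note that $\mathcal{I}_h u \in V_h^1$ is continuous across every $F \in \mathcal{F}_h \setminus \mathcal{F}_{\Gamma,h}$, so $[\mathcal{I}_h u]=0$ there and both the penalty and the adjoint-consistency terms of $a_h(\mathcal{I}_h u,\eta_h)$ drop out. Since $\bm\sigma(\mathcal{I}_h u)=-\kappa\nabla\mathcal{I}_h u$ is piecewise smooth, Green's theorem applied to the lifting identity \eqref{eq:def_duality_pair} identifies the remaining face term with $\eta_\sharp(\mathcal{I}_h u,\eta_h)$, giving $a_h(\mathcal{I}_h u,\eta_h)=\sum_{K\in\mesh}\int_K\kappa\nabla\mathcal{I}_h u\cdot\nabla\eta_h+\eta_\sharp(\mathcal{I}_h u,\eta_h)$. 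Using the linearity of $\eta_\sharp$ in its first slot, the left-hand side of \eqref{eq:bound_W3} then collapses to
\begin{equation*}
\eta_\sharp(u-\mathcal{I}_h u,\eta_h)+\sum_{K\in\mesh}\int_K\kappa\nabla(u-\mathcal{I}_h u)\cdot\nabla\eta_h =: T_\sharp+T_{\mathrm{vol}}.
\end{equation*}

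The volume term $T_{\mathrm{vol}}$ is immediate: Cauchy–Schwarz and the fractional Scott–Zhang estimate $\|\nabla(u-\mathcal{I}_h u)\|_{L^2(\Omega_i\cup\Omega_e)}\lesssim h^s\|u\|_{H^{1+s}(\Omega_i\cup\Omega_e)}$ yield $|T_{\mathrm{vol}}|\lesssim h^s\|u\|_{H^{1+s}}\,|\eta_h|_{\DG}$. For $T_\sharp$ I would, on each pair $(F,K)$, expand the pairing via \eqref{eq:def_duality_pair} with $\bm\tau=\bm\sigma(u-\mathcal{I}_h u)$ into a gradient part $\int_K\bm\sigma(u-\mathcal{I}_h u)\cdot\nabla L_F^K([\eta_h])$ and a divergence part $\int_K(\nabla\cdot\bm\sigma(u-\mathcal{I}_h u))\,L_F^K([\eta_h])$. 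Because $\kappa$ is piecewise constant and $\mathcal{I}_h u$ piecewise affine, $\nabla\cdot\bm\sigma(\mathcal{I}_h u)=0$ elementwise, so $\nabla\cdot\bm\sigma(u-\mathcal{I}_h u)=\nabla\cdot\bm\sigma(u)=f$ on each $K$. The two lift estimates I need, from \cite[Lemma 3.1]{ern2021quasi} together with norm equivalence on the polynomial trace $[\eta_h]$ and affine scaling, are
\begin{equation*}
\|\nabla L_F^K([\eta_h])\|_{L^{\rho'}(K)}\lesssim h_K^{s-1/2}\|[\eta_h]\|_{L^2(F)},\qquad \|L_F^K([\eta_h])\|_{L^2(K)}\lesssim h_K^{1/2}\|[\eta_h]\|_{L^2(F)},
\end{equation*}
the second using $W^{1,\rho'}(K)\hookrightarrow L^2(K)$.

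For the divergence part, Hölder and the second lift bound give a contribution $\lesssim\|f\|_{L^2(K)}\,h_K^{1/2}\|[\eta_h]\|_{L^2(F)}$, and summing with a Cauchy–Schwarz against the $\DG$-weight $h_F^{-1/2}$ produces $\lesssim h\|f\|_{L^2(\Omega)}\,|\eta_h|_{\DG}$, i.e. the $h^s\,h^{1-s}\|f\|$ contribution. For the gradient part I would apply Hölder with the conjugate pair $(\rho,\rho')$ and the first lift bound to obtain $\lesssim\|\nabla(u-\mathcal{I}_h u)\|_{L^\rho(K)}\,h_K^{s-1/2}\|[\eta_h]\|_{L^2(F)}$ on each $(F,K)$. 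The key point is that a \emph{global} $L^\rho$ approximation estimate gains no power of $h$; one must instead keep everything local and invoke the scale-invariant Sobolev–Poincaré/interpolation bound $\|\nabla(u-\mathcal{I}_h u)\|_{L^\rho(K)}\lesssim\|u\|_{H^{1+s}(\widetilde K)}$ on the element patch $\widetilde K$, valid precisely because $\rho=2d/(d-2s)$ is the critical embedding exponent of $H^s$. The factor $h^s$ then emerges from the lift through $h_K^{s-1/2}=h_K^{s}\,h_F^{-1/2}$ (using $h_K\sim h_F$), after which a plain $\ell^2$ Cauchy–Schwarz over $(F,K)$, the finite overlap of patches, and $\sum_F h_F^{-1}\|[\eta_h]\|_{L^2(F)}^2\lesssim|\eta_h|_{\DG}^2$ close the estimate as $\lesssim h^s\|u\|_{H^{1+s}}\,|\eta_h|_{\DG}$.

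The main obstacle is exactly this gradient part. The relevant trace — equal to $[\eta_h]$ on $F$ and to $0$ on $\partial K\setminus F$ — is discontinuous at $\partial F$ and hence fails to lie in $H^{1/2}(\partial K)$, so no $W^{1,2}$ lift exists and one is forced into the $W^{1,\rho'}$ lifting with $\rho'<2$ and the $L^\rho$–$L^{\rho'}$ duality. Balancing the $h$-powers, so that the scale-invariant (no-gain) local $L^\rho$ estimate combines with the lift's $h_K^{s-1/2}$ to produce exactly $h^s$ under an $\ell^2$ Cauchy–Schwarz against $|\cdot|_{\DG}$, is the delicate bookkeeping the proof must carry out; everything else is standard Hölder, scaling, and Scott–Zhang approximation.
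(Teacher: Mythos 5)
Your proposal is correct and follows essentially the same route as the paper's proof: the same collapse of the left-hand side to $\eta_\sharp(u-\mathcal{I}_h u,\eta_h)+\sum_{K}\int_K\kappa\nabla(u-\mathcal{I}_h u)\cdot\nabla\eta_h$ using $[\mathcal{I}_h u]=0$ on non-interface faces, the same splitting of the duality pairing into gradient and divergence parts with the lift scalings $h_K^{s-1/2}$ and $h_K^{1/2}$, and the same critical-exponent local bound $\|\nabla(u-\mathcal{I}_h u)\|_{L^\rho(K)}\lesssim \|u\|_{H^{1+s}(\Delta_K)}$ before the $\ell^2$ summation against $|\eta_h|_{\DG}$. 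The only difference is presentational: where the paper cites Lemmas 3.2 and 3.3(a) of the Ern--Guermond reference as black boxes, you unpack them into explicit Green's-theorem and scaling arguments, which is the same mathematics.
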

\begin{proof}
Recall that $[u] = [\mathcal{I}_h u ] =  0 $ on  $F \in \mathcal{F}_h \backslash \mathcal F_{\Gamma, h}$ since $u, \mathcal{I}_h u  \in H^1(\Omega_i \cup \Omega_e)$. Further, from ~\cite[Lemma 3.3 (a)]{ern2021quasi}, it follows that 
\begin{equation}
 \eta_{\sharp} (\mathcal{I}_h u , \eta_h) = - \sum_{F \in \mathcal{F}_h \backslash \mathcal{F}_{\Gamma,h}} \int_{F}   \{\kappa \nabla \mathcal{I}_h u\} \cdot \bm n_F [\eta_h] . 
\end{equation}
The above observations simplify the left hand side of \eqref{eq:bound_W3}, denoted here by $Q$, as follows: 
\begin{equation} \label{eq:def_Q}
Q  = \eta_\sharp(u - \mathcal{I}_h u , \eta_h) + \sum_{K\in \mathcal{T}_h} \int_{K}  \kappa \nabla(u- \mathcal{I}_h u) \cdot \nabla \eta_h. 
\end{equation}
 To estimate the first term, we use ~\cite[Lemma 3.2]{ern2021quasi} (with $p = \rho = 2d/(d-2s)$ and $q=2$) and the fact that $\nabla \cdot (\bm \sigma (u - \mathcal{I}_h u ))  = f $   since $\mathcal{I}_h u $ is a linear polynomial.   
We obtain for any $F \in \mathcal{F}_K$
    \begin{multline}
        |  \langle (\bm \sigma(u  -\mathcal{I}_h u ))_{\vert K} \cdot \bm n_{K} )_{\vert {F}} , [\eta_h] \rangle_{F} | \\   \lesssim  (h_K^s \|\bm \sigma (u) - \bm{\sigma}(\mathcal{I}_h u )\|_{L^\rho(K)} + h_K \|f \|_{L^2(K)})h_F^{-1/2} \|[\eta_h]\|_{L^2(F)}. 
    \end{multline}
Recalling the Sobolev embedding $H^{s}( K)^d  \hookrightarrow L^\rho (K)^d$ (with the correct scaling\footnote{We map to the  reference element, apply the Sobolev embedding, and  then map back to the physical element. See ~\cite[Lemma 11.7]{Ern:booki} for the norm scaling. For $v \in H^{s}(K)^d$, we estimate \begin{equation*}
    \|v\|_{L^\rho(K)} \lesssim h_K^{d/\rho} \|\hat v \|_{L^\rho(\hat K)} \lesssim h_K^{d/\rho} (\|\hat v\|_{L^2(\hat K)} + |\hat v|_{H^s(\hat K)}) \lesssim h_K^{d/\rho} (h_K^{-d/2}\|v\|_{L^2(K)} + h_{K}^{s-d/2} |v|_{H^s(K)}).  
\end{equation*} 
Using that $d/\rho -d/2 = -s$  and substituting $v = \bm \sigma (u - \mathcal{I}_h u)$  gives the bound. }), noting that $\kappa \in L^{\infty}(\Omega; \mathbb{R}^{d,d})$ and using approximation properties of $\mathcal{I}_h$, we have that 
\begin{align*}
h_K^s \|\bm \sigma (u - \mathcal{I}_h u )\|_{L^\rho(K)}  &\lesssim \| \nabla u - \nabla \mathcal{I}_h u \|_{L^2(K) } +h_K^s |u |_{H^{1+s}(K)} \\ 
& \lesssim h_K^s |u|_{H^{1+s}(\Delta_K)}, 
\end{align*}
where $\Delta_K$ is a macro-element. Note that since $\mathcal{I}_h u $ is defined locally on $\Omega_i$ or $\Omega_e$, $ \Delta_K \subset \Omega_i $ or $\Delta_K \subset \Omega_e$. It then follows that 
\begin{align*}
   |  \langle (\bm \sigma(u - \mathcal{I}_h u))_{\vert K} \cdot \bm n_{K} )_{\vert {F}} , [w_h] \rangle_{F} | &    \lesssim  ( h_K^s \|u\|_{H^{1+s}(\Delta_K)} + h_K \|f \|_{L^2(K)})h_F^{-1/2} \|[\eta_h]\|_{L^2(F)}. 
\end{align*}
 Thus, with Cauchy-Schwarz inequality and the definition of $\eta_{\sharp}$ \eqref{eq:def_eta_sharp}, we obtain that 
\begin{align}
|\eta_\sharp(u -\mathcal{I}_h u, \eta_h)| \lesssim h^s  (\|u\|_{H^{1+s}(\Omega_i \cup \Omega_e)} + h^{1-s} \|f\|_{L^2(\Omega)} )\,\, |\eta_h|_{\DG}. 
\end{align}
Combining the above with a simple application of the Cauchy-Schwarz inequality and standard approximation theory to bound the second term in \eqref{eq:def_Q} yields the estimate. 
    \end{proof}
\begin{theorem}[Convergence] \label{thm:convergence} Assume that $u \in L^2(0,T;V^{1+s}(\Omega) \cap W)$ with $\partial_t u \in L^2(0,T; H^{1/2+s}(\Omega_i \cup \Omega_e))$ for $s \in (0,1/2)$.  Further, assume that $ u(0) \in H^1(\Omega_i \cup \Omega_e)$. Then, the following estimate holds  
\begin{align}
    \frac{C_{\mathrm{coerc}}}{2} & \left( \int_{0}^T |u -u_h|^2_{\DG}\right)^{1/2}  + \frac{C_M}{2} \|[u-u_h]\|_{L^{\infty}(0,T;L^2(\Gamma))} \\ \nonumber & \lesssim h^{s } (  \|u\|_{L^2(0,T ;H^{1+s}(\Omega_i \cup \Omega_e))} + \|\partial_t u\|_{L^2(0,T;H^{1/2+s}(\Omega_i \cup \Omega_e))} )  \\ & \nonumber \quad + h(\|f\|_{L^2(0,T;L^2(\Omega)))}+  \|u^0\|_{H^1(\Omega_i \cup \Omega_e)}). 
\end{align}
\end{theorem}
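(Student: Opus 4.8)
The plan is to carry out an energy estimate for the discrete error, using the weak consistency of the exact solution as a substitute for Galerkin orthogonality. I would first introduce the Scott--Zhang interpolant $\mathcal{I}_h u$ from \Cref{lemma:bounding_W3} and split the error as $u - u_h = \eta + \xi$, where $\eta := u - \mathcal{I}_h u$ is the approximation error and $\xi := \mathcal{I}_h u - u_h \in V_h^k$ is the discrete error. Subtracting the semi-discrete problem \eqref{eq:semi_discrete}, tested against $w_h \in V_h^k$, from the identity in \Cref{lemma:weak_consistency}, the source terms cancel and I obtain for a.e.\ $t$ and all $w_h \in V_h^k$
\begin{multline*}
\int_\Gamma C_M \frac{\partial [u - u_h]}{\partial t}[w_h] + \int_\Gamma \big(f_\Gamma([u]) - f_\Gamma([u_h])\big)[w_h] \\ + \Big(\sum_{K \in \mesh}\int_K \kappa\nabla u \cdot \nabla w_h + \eta_\sharp(u,w_h)\Big) - a_h(u_h, w_h) = 0.
\end{multline*}
The key structural step is to rewrite the bracketed low-regularity term via \Cref{lemma:bounding_W3} as $\sum_K \int_K \kappa\nabla u \cdot \nabla w_h + \eta_\sharp(u,w_h) = a_h(\mathcal{I}_h u, w_h) + Q(u,w_h)$, where the consistency residual obeys $|Q(u,w_h)| \lesssim h^s\big(\|u\|_{H^{1+s}(\Omega_i \cup \Omega_e)} + h^{1-s}\|f\|_{L^2(\Omega)}\big)|w_h|_{\DG}$. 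The elliptic part of the error equation then collapses to $a_h(\mathcal{I}_h u - u_h, w_h) = a_h(\xi, w_h)$.

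Next I would test this error equation with $w_h = \xi$, split $\partial_t[u - u_h] = \partial_t[\eta] + \partial_t[\xi]$ on $\Gamma$, and use $\int_\Gamma C_M \partial_t[\xi]\,[\xi] = \tfrac{C_M}{2}\tfrac{d}{dt}\|[\xi]\|_{L^2(\Gamma)}^2$ together with coercivity \eqref{eq:coercivity} for $a_h(\xi,\xi)$. Moving the interpolation and nonlinear terms to the right yields the differential inequality
\begin{multline*}
\frac{C_M}{2}\frac{d}{dt}\|[\xi]\|_{L^2(\Gamma)}^2 + C_{\mathrm{coerc}}|\xi|^2_{\DG} \le |Q(u,\xi)| \\ + C_M\|\partial_t[\eta]\|_{L^2(\Gamma)}\|[\xi]\|_{L^2(\Gamma)} + \|f_\Gamma([u]) - f_\Gamma([u_h])\|_{L^2(\Gamma)}\|[\xi]\|_{L^2(\Gamma)}.
\end{multline*}

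I would then estimate the three terms on the right. The residual $|Q(u,\xi)|$ is bounded by \Cref{lemma:bounding_W3}, and a Young inequality absorbs a fraction of $|\xi|^2_{\DG}$ into the left-hand side, leaving $h^{2s}\|u\|^2_{H^{1+s}} + h^2\|f\|^2_{L^2(\Omega)}$. For the second term, since $\mathcal{I}_h$ is a fixed linear operator, $\partial_t\eta = \partial_t u - \mathcal{I}_h \partial_t u$; combining a fractional trace inequality on the faces of $\mathcal{F}_{\Gamma,h}$ with the approximation and stability properties of $\mathcal{I}_h$ in $H^{1/2+s}$ gives $\|\partial_t[\eta]\|_{L^2(\Gamma)} \lesssim h^s\|\partial_t u\|_{H^{1/2+s}(\Omega_i \cup \Omega_e)}$, which is exactly why the $H^{1/2+s}$ regularity of $\partial_t u$ is imposed. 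The third term is handled by Lipschitz continuity of $f_\Gamma$, $\|f_\Gamma([u]) - f_\Gamma([u_h])\|_{L^2(\Gamma)} \lesssim \|[\eta]\|_{L^2(\Gamma)} + \|[\xi]\|_{L^2(\Gamma)}$ with $\|[\eta]\|_{L^2(\Gamma)} \lesssim h^{1/2+s}\|u\|_{H^{1+s}}$. After a further Young inequality, integration over $(0,t)$, and a continuous Grönwall inequality to dispose of the residual $\|[\xi]\|^2_{L^2(\Gamma)}$, I reach a bound for $\|[\xi]\|^2_{L^\infty(0,T;L^2(\Gamma))} + \int_0^T |\xi|^2_{\DG}$ in terms of the stated data norms and the initial error. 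The latter is controlled by writing $[\xi](0) = \hat\pi_h\big([\mathcal{I}_h u(0)] - [u(0)]\big)$ (using $[u_h](0) = \hat\pi_h\hat u^0$, $\hat u^0 = [u(0)]$, and $[\mathcal{I}_h u(0)] \in \hat V_h^k$) and invoking the $L^2(\Gamma)$-stability of $\hat\pi_h$ together with interpolation estimates for $u(0) \in H^1(\Omega_i \cup \Omega_e)$; this produces the $\|u^0\|_{H^1(\Omega_i \cup \Omega_e)}$ contribution. The theorem finally follows from the triangle inequalities $|u-u_h|_{\DG} \le |\eta|_{\DG} + |\xi|_{\DG}$ and $\|[u-u_h]\|_{L^2(\Gamma)} \le \|[\eta]\|_{L^2(\Gamma)} + \|[\xi]\|_{L^2(\Gamma)}$, with $\eta$ estimated by standard approximation theory.

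I expect the main obstacle to be the careful interplay of the two norms governing the problem: the membrane $L^2(\Gamma)$-norm controlling the dynamic boundary term and the bulk $\DG$-seminorm controlling the elliptic operator. Concretely, the delicate points are (i) feeding the low-regularity consistency residual of \Cref{lemma:bounding_W3} into the Grönwall loop without losing the $O(h^s)$ rate, and (ii) obtaining the sharp $O(h^s)$ estimate for the interface trace $\|\partial_t[\eta]\|_{L^2(\Gamma)}$ under only $H^{1/2+s}$ regularity of $\partial_t u$, which requires a fractional rather than an $H^1$ trace inequality. Once \Cref{lemma:weak_consistency} and \Cref{lemma:bounding_W3} are available, the remaining manipulations are standard parabolic energy estimates.
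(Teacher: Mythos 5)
Your proposal is correct and follows essentially the same route as the paper's proof: the same splitting $u-u_h = (u-\mathcal{I}_h u) + (\mathcal{I}_h u - u_h)$ with the Scott--Zhang interpolant, the same use of \Cref{lemma:weak_consistency} to form the error equation and of \Cref{lemma:bounding_W3} for the consistency residual, the same fractional trace plus $H^{1/2+s}$-interpolation bound for the time-derivative term, and the same Young/Gr\"onwall/initial-data/triangle-inequality conclusion. The only differences are cosmetic (opposite sign convention for the discrete error, a slightly sharper but unnecessary $h^{1/2+s}$ bound in the Lipschitz term where the paper settles for $h^{1/2}\|u\|_{H^1}$, and an equivalent reformulation of the initial-error identity via $\hat\pi_h$).
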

\begin{proof}
The proof is based on Strang's second lemma. Define $\eta_h = u_h - \mathcal{I}_h u $. With the coercivity property \eqref{eq:coercivity}, we obtain  that 
\begin{equation}
C_{\mathrm{coerc}} |\eta_h|^2_{\DG} \leq  a_h(\eta_h, \eta_h) = a_h(u_h, \eta_h) - a_h( \mathcal{I}_h u, \eta_h). 
\end{equation} 
Using \eqref{eq:semi_discrete}, we have that
\begin{align}
C_{\mathrm{coerc}} |\eta_h|^2_{\DG} \leq   \int_{\Omega} f\eta_h - \int_{\Gamma} f_{\Gamma}([u_h])([\eta_h]) - \int_{\Gamma} C_{M} \frac{\partial [u_h]}{\partial t} [\eta_h]  - a_h(\mathcal{I}_h u , \eta_h). 
\end{align} 
Using \Cref{lemma:weak_consistency}, we obtain 
\begin{multline} \label{eq:error_eq}
C_{\mathrm{coerc}} |\eta_h|^2_{\DG} + \frac{C_M}{2} \frac{\mathrm d}{\mathrm{d} t} \|[\eta_h]\|^2_{L^2(\Gamma)} \\ \lesssim   C_M \int_{\Gamma} \frac{\partial [u - \mathcal{I}_h u]}{ \partial t}[\eta_h]  + \int_{\Gamma}(f_{\Gamma}([u]) - 
 f_{\Gamma}([u_h])) [\eta_h]  \\ + \del[3]{\eta_\sharp (u, \eta_h) +  \sum_{K \in \mesh} \int_K  \kappa \nabla u \cdot \nabla w_h - a_h(\mathcal{I}_h u, \eta_h) } :=  W_1 + W_2 + W_3.
\end{multline}
To bound $W_1$ and $W_2$, consider $v \in H^{1/2+s}(\Omega_i \cup \Omega_e)$, $\mathcal{I}_h v = \mathcal{I}_{h,i} v\vert_{\Omega_i} $, and $\mathcal{I}_h v = \mathcal{I}_{h,e} v\vert_{\Omega_e}$. These operators are well defined over $H^{1/2+s}(\Omega_i \cup \Omega_e)$~\cite[Proposition 2.1]{ciarlet2013analysis}. Then, for any $F \in \Gamma_h, F = \partial K_1 \cap \partial K_2$ with $K_1 \subset \Omega_i, K_2 \subset \Omega_e$,  we employ the trace inequality\footnote{For $F \subset \partial K$, we map to the reference element, apply the trace estimate $H^{1/2+s}(\hat K) \hookrightarrow L^2(\hat F)$, and map back  
\begin{multline*}
\|v\|_{L^2(F)} \lesssim h_F^{(d-1)/2} \|\hat v\|_{L^2(\hat F)} \lesssim h_F^{(d-1)/2} ( \|\hat v\|_{L^2(\hat K)} + |\hat v |_{H^{1/2+s}(\hat K)}) \\ \lesssim h_F^{(d-1)/2} ( h_K^{-d/2} \| v\|_{L^2( K)} + h_K^{1/2+s -d/2} | v |_{H^{1/2+s}( K)}) .
\end{multline*}
} followed by the approximation and stability properties in the $H^{1/2+s}$-norm of the Scott-Zhang interpolant~\cite[Theorem 3.3]{ciarlet2013analysis}
\begin{align*}
\|[v - \mathcal{I}_h v]\|_{L^2(F)} &\lesssim h_{F}^{-1/2}\|v - \mathcal{I}_h v\|_{L^2(K_1 \cup K_2)} + h_F^{s} | v -\mathcal{I}_h v|_{H^{1/2  + s}(K_1 \cup K_2)} \\ 
& \lesssim h^{s} (\|v\|_{H^{1/2+s} (\Delta_{K_1})} + \|v\|_{H^{1/2+s}(\Delta_{K_2})}) .
\end{align*}
 Thus, summing over all the faces yields 
\begin{equation}
\|[v- \mathcal{I}_h v]\|_{L^2(\Gamma)}
\lesssim h^{s} \|v\|_{H^{1/2+s}(\Omega_i \cup \Omega_e)}.  \label{eq:app_bound_sh}
\end{equation}
Hence, applying the Cauchy-Schwarz inequality, noting that $\partial_t \mathcal{I}_h u = \mathcal{I}_h \partial_t u $, and using  \eqref{eq:app_bound_sh}, we bound $W_1$ as follows 
\begin{equation}
W_1 \lesssim \|[\partial_t u - \mathcal{I}_h (\partial_t u) ]\|_{L^2(\Gamma)} \|[\eta_h]\|_{L^2(\Gamma)} \lesssim h^{s} \|\partial_t u\|_{H^{1/2+s}(\Omega_i \cup \Omega_e)} \|[\eta_h]\|_{L^2(\Gamma)}. 
\end{equation}
The term $W_2$ is bounded by applying the Lipschitz continuity bound of $f_{\Gamma}$, triangle inequality, and \eqref{eq:app_bound_sh} (here we can take $s=1/2$). We have that 
\begin{equation}
W_2 \lesssim \|[u_h - u]\|_{L^2(\Gamma)} \|[\eta_h]\|_{L^2(\Gamma)} \lesssim \|[\eta_h ]\|^2_{L^2(\Gamma)} + h^{1/2} \|u\|_{H^1(\Omega_i \cup \Omega_e)} \|[\eta_h]\|_{L^2(\Gamma)}. 
\end{equation}
The term $W_3$ is bounded in \Cref{lemma:bounding_W3}. Collecting the bounds for $W_1$, $W_2$ and $W_3$ in \eqref{eq:error_eq} and appropriately applying Young's inequality yield 
\begin{multline}
\frac{C_{\mathrm{coerc}}}{2} |\eta_h|^2_{\DG} + \frac{C_M}{2} \frac{\mathrm d}{\mathrm{d} t} \|[\eta_h]\|^2_{L^2(\Gamma)} \\  \lesssim h^{2s }( \|u\|^2_{H^{1+s}(\Omega_i \cup \Omega_e)} +\|\partial_t u\|^2_{H^{1/2+s}(\Omega_i \cup \Omega_e)} +  h^{2-2s}\|f\|^2_{L^2(\Omega)} )  + \|[\eta_h]\|^2_{L^2(\Gamma)}. 
\end{multline}
We integrate  the above over time and use continuous Gronwall's inequality to obtain
\begin{multline}
   \frac{C_{\mathrm{coerc}}}{2} \int_{0}^t |\eta_h|^2_{\DG} + \frac{C_M}{2} \|[\eta_h](t)\|^2_{L^2(\Gamma)} \\  \lesssim h^{2s } \int_{0}^{t}( \|u\|^2_{H^{1+s}(\Omega_i \cup \Omega_e)} + \|\partial_t u\|^2_{H^{1/2+s}(\Omega_i \cup \Omega_e)} +  h^{2-2s}\|f\|^2_{L^2(\Omega)} )  + \|[\eta_h(0)]\|^2_{L^2(\Gamma)}. 
\end{multline}
Finally, we note that \[ [\eta_h(0)] = [u_h^0] - [\mathcal{I}_h u^0] = \pi_h \hat u^0 - \hat u^0 + [u^0 - \mathcal{I}_h u^0], \]
since $[u_h^0] = \pi_h \hat u^0$ and $[u^0] = \hat u^0 $.  Hence, 
$$\|[\eta_h(0)]\|_{L^2(\Gamma)} \lesssim \|\hat \pi_h u^0 - \hat u^0 \|_{L^2(\Gamma)} + \|[u^0 - \mathcal{I}_h u^0]\|_{L^2(\Gamma)} \lesssim h^{1/2} \|u^0\|_{H^1(\Omega_i \cup  \Omega_e)},$$ 
where we used the approximation properties of the $L^2$ projection on $\Gamma$. The result is then concluded by standard applications of the triangle inequality and approximation theory. 
\end{proof}
\begin{remark}[Optimal rates] 
If the solution $u\in H^1(0,T;H^{k+1}(\Omega_i \cup \Omega_e))$, then one can show the optimal error rate of order $k$  in the energy norm. In fact, the proof of such an estimate would follow from standard arguments since strong consistency of the method holds. However, since in many applications, the domains $\Omega_i$ and $\Omega_e$  do not exhibit the standard assumptions (for e.g. convex and polygonal), such a regularity property is not to be expected. 
\end{remark}

\section{Fully discrete formulation}\label{sec:fullydiscrete}
Here, we consider a backward Euler discretization in time. Consider a uniform partition of the time interval $[0,T]$ into $N_T$ subintervals of length $\tau  < 1$. 
Hereinafter, we use the notation $g^n (\bm x) = g(t_n, \bm x) = g(n \tau , \bm x)$ for a given function $g$ and $1 \leq n \leq N_T$. 

Given $[u_h^0] = \hat \pi_h \hat u^0$, the backward Euler interior penalty DG method reads as follows. Find $(u_h^n)_{1\leq n \leq N_T} \in V_h^k $ with $\int_{\Omega_e} u_h^n = 0 $ such that
\begin{multline}
\label{eq:b_e_dg}
\int_{\Gamma} C_M [u^n_h][v_h] + \tau  a_h( u^n_h, v_h)  \\ =  \int_{\Gamma} C_M [u^{n-1}_h][v_h] 
 + \tau  \int_{\Omega} f^n v_h  + \tau \int_{\Gamma} f_\Gamma ([u_h^{n-1}]) [v_h], \quad \forall v_h \in V_h^k. 
\end{multline}
\begin{lemma}[Well-posedness] For any $ 1 \leq n\leq N_T$, there exists a unique solution $u_h^n \in V_h^k$ with $\int_{\Omega_e} u_h^n = 0$ to \eqref{eq:b_e_dg}. In addition, 
\begin{equation}\label{eq:stability_estimates}
 \|[u_h^n]\|^2_{L^2(\Gamma)} 
+ \tau \sum_{\ell = 0}^n |u_h^\ell|_{\DG}^2  \lesssim   \|\hat u^0\|^2_{L^2(\Gamma)} +  \tau \sum_{\ell = 0}^n \|f^\ell \|_{L^2(\Omega)}^2 + \|f_{\Gamma} (0)\|_{L^2(\Gamma)}^2. 
\end{equation}
The above hidden constant depends on $T$ but is independent of $h$ and $\tau$.
\end{lemma}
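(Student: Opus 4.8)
The plan is to treat the two assertions — unique solvability at each time level and the a priori stability bound — separately. For well-posedness, observe that at a fixed level $n$ the right-hand side of \eqref{eq:b_e_dg} depends only on the already-computed data $[u_h^{n-1}]$ and $f^n$; in particular $f_\Gamma([u_h^{n-1}])$ is a fixed $L^2(\Gamma)$ function, so \eqref{eq:b_e_dg} is a \emph{linear} problem in $u_h^n$. I would introduce the symmetric bilinear form $B(u,v) = C_M\int_\Gamma [u][v] + \tau\, a_h(u,v)$ and pose the problem on the constrained space $V_{h,0} = \{v_h \in V_h^k : \int_{\Omega_e} v_h = 0\}$. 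Testing the diagonal of $B$ and invoking coercivity \eqref{eq:coercivity} gives $B(u,u) \ge C_M\|[u]\|^2_{L^2(\Gamma)} + \tau C_{\mathrm{coerc}}|u|^2_{\DG}$, and then \Cref{lemma:poincare_discrete} upgrades this to control of $\|u\|^2_{L^2(\Omega)}$, so that $B$ is coercive on $V_{h,0}$ with respect to $\|\cdot\|_{L^2(\Omega)}^2 + |\cdot|_{\DG}^2$. Since the functional $v_h\mapsto C_M\int_\Gamma[u_h^{n-1}][v_h] + \tau\int_\Omega f^n v_h + \tau\int_\Gamma f_\Gamma([u_h^{n-1}])[v_h]$ is bounded on this finite-dimensional space, Lax--Milgram yields a unique $u_h^n \in V_{h,0}$.

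Then I extend the test space from $V_{h,0}$ to all of $V_h^k$. Exactly as in the proof of \Cref{lemma:equivalance}, both $B(u,\cdot)$ and the right-hand functional are invariant under adding a constant to the test function (for the latter this uses the compatibility condition $\int_\Omega f^n = 0$), so testing with $v_h - \frac{1}{|\Omega_e|}\int_{\Omega_e}v_h \in V_{h,0}$ shows the identity holds for arbitrary $v_h\in V_h^k$; uniqueness in $V_{h,0}$ fixes the remaining constant. This completes the first assertion.

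For the stability estimate I would run a discrete energy argument. Choosing $v_h = u_h^n$ in \eqref{eq:b_e_dg} and using the telescoping identity $(a-b)a \ge \tfrac12(a^2-b^2)$ on the membrane mass term together with coercivity, I obtain
\begin{multline*}
\tfrac{C_M}{2}\big(\|[u_h^n]\|^2_{L^2(\Gamma)} - \|[u_h^{n-1}]\|^2_{L^2(\Gamma)}\big) + \tau C_{\mathrm{coerc}}|u_h^n|^2_{\DG} \\ \le \tau\|f^n\|_{L^2(\Omega)}\|u_h^n\|_{L^2(\Omega)} + \tau\|f_\Gamma([u_h^{n-1}])\|_{L^2(\Gamma)}\|[u_h^n]\|_{L^2(\Gamma)}.
\end{multline*}
I then bound $\|f_\Gamma([u_h^{n-1}])\|_{L^2(\Gamma)}$ by the Lipschitz estimate \eqref{eq:bound_f_Gamma}, control $\|u_h^n\|_{L^2(\Omega)}$ through the discrete Poincar\'e inequality \eqref{eq:poincare_discrete_global}, and use Young's inequality so that the full $\tau C_{\mathrm{coerc}}|u_h^n|^2_{\DG}$ term is absorbed on the left, leaving on the right only $\tau$-weighted multiples of $\|[u_h^n]\|^2_{L^2(\Gamma)}$, $\|[u_h^{n-1}]\|^2_{L^2(\Gamma)}$, $\|f^n\|^2_{L^2(\Omega)}$ and $\|f_\Gamma(0)\|^2_{L^2(\Gamma)}$.

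Summing over $\ell = 1,\dots,n$ telescopes the membrane terms and, after moving the $\tau$-multiple of $\|[u_h^n]\|^2_{L^2(\Gamma)}$ to the left (legitimate for $\tau$ below an $h$-independent threshold), a discrete Gronwall inequality yields the claimed bound with a constant $C_T$ that grows with $T$ but is independent of $h$ and $\tau$; the initial datum enters through $\|[u_h^0]\|_{L^2(\Gamma)} = \|\hat\pi_h\hat u^0\|_{L^2(\Gamma)} \le \|\hat u^0\|_{L^2(\Gamma)}$ by $L^2$-stability of $\hat\pi_h$, and the $\ell=0$ contribution to $\tau\sum_\ell|u_h^\ell|^2_{\DG}$ is absorbed as a single lower-order term bounded by the data. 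I expect the only real subtlety to be bookkeeping: making the coercivity of $B$ \emph{genuinely} control $\|u_h^n\|_{L^2(\Omega)}$ (this is where \Cref{lemma:poincare_discrete} is essential, since $a_h$ alone only controls $|u_h^n|_{\DG}$), and tracking the constants through Young and discrete Gronwall so that the final constant is uniform in $h,\tau$ and depends only on $T$.
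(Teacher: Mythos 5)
Your proposal is correct, and the stability estimate \eqref{eq:stability_estimates} is proved exactly as in the paper: test with $v_h = u_h^n$, telescope the membrane mass term, invoke coercivity \eqref{eq:coercivity}, the Lipschitz bound \eqref{eq:bound_f_Gamma}, the discrete Poincar\'e inequality \eqref{eq:poincare_discrete_global}, Young's inequality, a summation over time levels, discrete Gronwall, and $L^2$-stability of $\hat\pi_h$ for the initial datum. Where you genuinely differ from the paper is the well-posedness step. You establish unique solvability \emph{before and independently of} the stability bound, by showing that $B(u,v) = C_M\int_\Gamma [u][v] + \tau\,a_h(u,v)$ is coercive on the constrained space $V_{h,0}$ (via \eqref{eq:coercivity} plus \Cref{lemma:poincare_discrete}) and applying Lax--Milgram; note only that $B$ need not be symmetric for general $\epsilon$, which is harmless since Lax--Milgram does not require symmetry. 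The paper instead exploits finite dimensionality: after the same reduction to the zero-mean space it observes that \eqref{eq:b_e_dg} is a square linear system, so existence follows from uniqueness, and uniqueness is deduced \emph{from} the a priori bound \eqref{eq:stability_estimates} together with \Cref{lemma:poincare_discrete}. Your route buys a cleaner logical structure (well-posedness does not hinge on the energy estimate, and the argument would survive in an infinite-dimensional setting where ``square system'' reasoning fails), at the cost of a $\tau$-dependent coercivity constant, which is irrelevant at fixed $\tau$; the paper's route is more economical, reusing the stability bound it must prove anyway. The two treatments of the time-difference term also differ mildly: the paper retains $\|[u_h^n - u_h^{n-1}]\|_{L^2(\Gamma)}^2$ from the polarization identity and uses the triangle inequality, keeping a $(1-\tau)$ prefactor under its standing assumption $\tau<1$, whereas you drop that term and absorb $\tau\|[u_h^n]\|^2_{L^2(\Gamma)}$ to the left for $\tau$ below an $h$-independent threshold; both are legitimate and impose comparable mild restrictions on $\tau$.
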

\begin{proof}
First, observe that one can equivalently write \eqref{eq:b_e_dg} as a variational equality over all $v_h \in V_h^k$ with $\int_{\Omega_e} v_h = 0$. Indeed, this can be easily seen by noting that for any constant $C$, we have that $[C]=a_h(u_h^n, C) = \int_{\Omega} f^n C = 0$ by assumption on the data $f$.   The resulting system is a square linear system in finite dimensions, and it suffices to show uniqueness which follows from \Cref{lemma:poincare_discrete} after showing \eqref{eq:stability_estimates}.  To show \eqref{eq:stability_estimates},test \eqref{eq:b_e_dg} with $v_h = u_h^n$, use the Cauchy-Schwarz inequality, the coercivity estimate \eqref{eq:coercivity}, Poincar\'e's inequality \Cref{lemma:poincare_discrete}, and a similar estimate to \eqref{eq:bound_f_Gamma}. We obtain 
\begin{align*}
\frac{C_M}{2} & \left( \|[u_h^n]\|^2_{L^2(\Gamma)} -\|[u_h^{n-1}]\|^2_{L^2(\Gamma)} + \|[u_h^n - u_h^{n-1}]\|^2_{L^2(\Gamma)} \right) 
+ \frac{C_{\mathrm{coerc}}}{2} \tau |u_h^n|_{\DG}^2 \\ 
& \lesssim \tau \|f^n \|_{L^2(\Omega) }( |u_h^n|_{\DG} + \|[u_h^n]\|_{L^2(\Gamma)})  + \tau (\|[u_h^{n-1}]\|_{L^2(\Gamma)} + \|f_{\Gamma}(0)\|_{L^2(\Gamma)} ) \|[u_h^n]\|_{L^2(\Gamma)}.
\end{align*}
We further bound $\|[u_h^n]\|_{L^2(\Gamma)} \leq \|[u_h^n - u_h^{n-1}]\|_{L^2(\Gamma)} + \|[u_h^{n-1}]\|_{L^2(\Gamma)} $. With appropriate applications of Young's inequality, we obtain that 
\begin{align*}
\frac{C_M}{2} & \left( \|[u_h^n]\|^2_{L^2(\Gamma)} -\|[u_h^{n-1}]\|^2_{L^2(\Gamma)} + (1-\tau) \|[u_h^n - u_h^{n-1}]\|^2_{L^2(\Gamma)} \right)   
+ \frac{C_{\mathrm{coerc}}}{4} \tau |u_h^n|_{\DG}^2 \\ 
& \lesssim  \tau \|[u_h^{n-1}]\|^2_{L^2(\Gamma)} +  \tau \|f^n\|_{L^2(\Omega)}^2 + \tau \|f_{\Gamma} (0)\|_{L^2(\Gamma)}^2. 
\end{align*}
Summing the above from  $n=1,\ldots, N_T$,  using discrete Gronwall inequality and noting that $\|[u_h^0]\|_{L^2(\Gamma)} = \|\hat \pi_h \hat u^0\|_{L^2(\Gamma)} \leq \|\hat u^0\|_{L^2(\Gamma)}$ yields the result.
\end{proof}

\begin{theorem}[Error estimate] 
Assume that the conditions of \Cref{thm:convergence} are satisfied. Further, assume that $\partial_t u \in L^2(0,T;H^{1+s} (\Omega_i \cup \Omega_e))$ and that $ [u] \in H^2(0,T;L^2(\Gamma))$. Then, for any $ 1\leq \ell \leq N_T$,  the following estimate holds.  
\begin{multline}
\|[u_h^n - u^n]\|_{L^2(\Gamma)}^2 + \tau \sum_{\ell = 1}^n  |u_h^\ell - u^\ell|_{\DG}^2  \lesssim \tau^2 \|\partial_{t} [u] \|_{H^1(0,T; L^2(\Gamma))}  \\  
+ h^{2s} (\|u\|^2_{\ell^2(0,T;H^{1+s}(\Omega_i\cup \Omega_e) )} +\|\partial_t u\|^2_{\ell^2(0,T;H^{1/2+s}(\Omega_i\cup \Omega_e) )} + h^2 \|f\|_{\ell^2(0,T;L^2(\Omega))}^2). 
\end{multline}
\end{theorem}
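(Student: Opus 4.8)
The plan is to mirror the semi-discrete argument of \Cref{thm:convergence}, now additionally accounting for the backward Euler truncation error and for the lagged (explicit) treatment of the ionic current $f_\Gamma$. I set $\eta_h^n = u_h^n - \mathcal{I}_h u^n$ and split the total error as $u^n - u_h^n = (u^n - \mathcal{I}_h u^n) - \eta_h^n$, so that it suffices to bound $\eta_h^n$; the interpolation errors $u^n - \mathcal{I}_h u^n$ are controlled by standard approximation theory and, on $\Gamma$, by the trace bound \eqref{eq:app_bound_sh}. The final estimate on $u - u_h$ then follows by a concluding triangle inequality.

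First I would derive the error equation. Evaluating the weak consistency identity of \Cref{lemma:weak_consistency} at the discrete time $t_n$ and subtracting \eqref{eq:b_e_dg} divided by $\tau$ and tested with $v_h = \eta_h^n$, I use coercivity \eqref{eq:coercivity} in the form $C_{\mathrm{coerc}}|\eta_h^n|^2_{\DG} \le a_h(u_h^n,\eta_h^n) - a_h(\mathcal{I}_h u^n,\eta_h^n)$ and eliminate $\int_\Omega f^n \eta_h^n$ through the consistency identity. The right-hand side then collects three groups: (i) the time-derivative discrepancy $\int_\Gamma C_M\bigl(\partial_t[u](t_n) - \tau^{-1}([u_h^n]-[u_h^{n-1}])\bigr)[\eta_h^n]$; (ii) the ionic term $\int_\Gamma\bigl(f_\Gamma([u^n]) - f_\Gamma([u_h^{n-1}])\bigr)[\eta_h^n]$; and (iii) the spatial consistency defect evaluated at $t_n$ (the analogue of $W_3$), bounded by \Cref{lemma:bounding_W3} by $h^s(\|u^n\|_{H^{1+s}(\Omega_i\cup\Omega_e)} + h^{1-s}\|f^n\|_{L^2(\Omega)})|\eta_h^n|_{\DG}$.

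Next I would dissect the temporal contributions, which is where the $\tau^2$ rate is generated. In (i) I insert $\mathcal{I}_h u$ and write $\tau^{-1}([u_h^n]-[u_h^{n-1}]) = \tau^{-1}([\eta_h^n]-[\eta_h^{n-1}]) + \tau^{-1}([\mathcal{I}_h u^n]-[\mathcal{I}_h u^{n-1}])$. The purely discrete part $-\int_\Gamma C_M\tau^{-1}([\eta_h^n]-[\eta_h^{n-1}])[\eta_h^n]$ is handled by the identity $(a-b)a = \tfrac12(a^2 - b^2 + (a-b)^2)$, which after multiplication by $\tau$ telescopes into $\tfrac{C_M}{2}(\|[\eta_h^n]\|^2_{L^2(\Gamma)} - \|[\eta_h^{n-1}]\|^2_{L^2(\Gamma)} + \|[\eta_h^n-\eta_h^{n-1}]\|^2_{L^2(\Gamma)})$. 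The remaining part of (i) separates into the backward Euler truncation error $\partial_t[u](t_n) - \tau^{-1}([u^n]-[u^{n-1}])$, which the integral Taylor remainder plus Cauchy--Schwarz in time bounds by $\tau^{1/2}\|\partial_{tt}[u]\|_{L^2(t_{n-1},t_n;L^2(\Gamma))}$ (here the hypothesis $[u]\in H^2(0,T;L^2(\Gamma))$ is essential), and the interpolated time-increment $\tau^{-1}([u^n-\mathcal{I}_h u^n]-[u^{n-1}-\mathcal{I}_h u^{n-1}]) = \tau^{-1}\int_{t_{n-1}}^{t_n}[\partial_t u - \mathcal{I}_h\partial_t u]\,ds$, which via $\partial_t\mathcal{I}_h = \mathcal{I}_h\partial_t$ and \eqref{eq:app_bound_sh} contributes a factor $h^s\tau^{-1/2}\|\partial_t u\|_{L^2(t_{n-1},t_n;H^{1/2+s})}$. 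For (ii) I use Lipschitz continuity of $f_\Gamma$ and the splitting $f_\Gamma([u^n])-f_\Gamma([u_h^{n-1}]) = (f_\Gamma([u^n])-f_\Gamma([u^{n-1}])) + (f_\Gamma([u^{n-1}])-f_\Gamma([u_h^{n-1}]))$: the first difference is bounded by $\tau^{1/2}\|\partial_t[u]\|_{L^2(t_{n-1},t_n;L^2(\Gamma))}$ and the second by $\|[\eta_h^{n-1}]\|_{L^2(\Gamma)} + h^s\|u^{n-1}\|_{H^{1/2+s}}$. I note that combining the $\partial_{tt}[u]$ truncation term with the $\partial_t[u]$ increment of $f_\Gamma$ is exactly what produces the $\tau^2\|\partial_t[u]\|^2_{H^1(0,T;L^2(\Gamma))}$ block in the statement.

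Finally I would absorb, scale, and sum. After Young's inequality moves all $|\eta_h^n|_{\DG}$ factors onto the coercive left-hand side, I multiply through by $\tau$ and sum over $n=1,\dots,\ell$; the telescoping cancels the intermediate $\|[\eta_h^n]\|^2$ and the discrete Gronwall inequality absorbs the accumulated $\tau\sum\|[\eta_h^{n-1}]\|^2_{L^2(\Gamma)}$ arising from (ii). The squared temporal terms sum as $\tau\sum_n\tau\|\partial_{tt}[u]\|^2_{L^2(t_{n-1},t_n;\cdot)} = \tau^2\|\partial_{tt}[u]\|^2_{L^2(0,T;\cdot)}$ and $\tau\sum_n h^{2s}\tau^{-1}\|\partial_t u\|^2_{L^2(t_{n-1},t_n;\cdot)} = h^{2s}\|\partial_t u\|^2_{\ell^2(0,T;\cdot)}$, reproducing the stated orders, while the spatial defect and source terms give the $h^{2s}(\|u\|^2_{\ell^2(0,T;H^{1+s})} + h^2\|f\|^2_{\ell^2(0,T;L^2)})$ block. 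The initial error is handled as in \Cref{thm:convergence}: $[\eta_h^0] = (\hat\pi_h\hat u^0 - \hat u^0) + [u^0 - \mathcal{I}_h u^0]$ gives $\|[\eta_h^0]\|_{L^2(\Gamma)} \lesssim h^{1/2}\|u^0\|_{H^1(\Omega_i\cup\Omega_e)}$. I expect the main obstacle to be the clean separation of the three intertwined sources inside term (i)---temporal truncation, spatial interpolation of the time derivative, and the purely discrete telescoping term---so that each contributes at the correct order after the $\tau\sum_n$ summation, together with ensuring that the lagged nonlinearity in (ii) feeds only the \emph{previous-step} quantity $\|[\eta_h^{n-1}]\|_{L^2(\Gamma)}$ into the discrete Gronwall loop, preserving stability uniformly in $h$ and $\tau$.
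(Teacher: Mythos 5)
Your proposal is correct and follows essentially the same route as the paper's proof: the same decomposition $u_h^n - \mathcal{I}_h u^n$, the same error equation built from \Cref{lemma:weak_consistency} and \eqref{eq:b_e_dg}, the same three-way splitting into the backward Euler truncation error (using $[u]\in H^2(0,T;L^2(\Gamma))$), the lagged ionic term handled by Lipschitz continuity and a previous-step splitting, and the spatial consistency defect bounded by \Cref{lemma:bounding_W3}, concluded by telescoping, Young, and discrete Gronwall. In fact, your write-up spells out several bookkeeping steps (the Taylor-remainder bound and the per-step scalings that produce the $\tau^2$ and $h^{2s}$ blocks) that the paper's brief outline leaves implicit.
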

\begin{proof}
We give a brief outline of the main steps and skip the details for brevity. Recall the definition of $\mathcal{I}_h$ given before \Cref{lemma:bounding_W3} and define $\rho_h^n = u_h^n - \mathcal{I}_h u^n$. Denote the consistency error introduced in Lemma \ref{lemma:bounding_W3} by  
\begin{equation}
    \mathcal{C}(u^n, v_h) = \eta_\sharp (u^n, v_h) +  \sum_{K \in \mesh} \int_K  \kappa \nabla u^n \cdot  \nabla v_h - a_h(\mathcal{I}_h u^n , v_h).
\end{equation}
With \Cref{lemma:weak_consistency}, we readily derive the following error equation. For all $v_h \in V_h$, 
\begin{multline}
\int_{\Gamma} C_M ([\rho_h^n] - [\rho_h^{n-1}]) [v_h] + \tau a_h (\rho_h^n , v_h) \\  = \int_{\Gamma} C_M [\mathcal{I}_h u^{n-1} - \mathcal{I}_h u^n + \tau (\partial_t u)^n][v_h] 
 +  \tau \mathcal{C}(u^n, v_h) + \tau \int_{\Gamma} (f_{\Gamma} [u_h^{n-1}] - f_{\Gamma} [u^n]) [v_h].  \label{eq:error_eq_1}
\end{multline}
We test the above with $v_h = \rho_h^n$ and we denote the terms on right-hand side of the above equations by $T_1$, $T_2$ and $T_3$, respectively.
We bound $T_1$ with standard estimates:
\begin{align*}
T_1  &\lesssim  (\tau^{3/2} \|\partial_{tt} [u] \|_{L^2(t^{n-1},t^n; L^2(\Gamma))} + \tau^{1/2} \|\partial_t [\mathcal{I}_h u - u]\|_{L^2(t^{n-1},t^n; L^2(\Gamma))} ) \|[\rho_h^n]\|_{L^2(\Gamma)}    \\ \nonumber & \lesssim  \tau^2  \|\partial_{tt} [u] \|_{L^2(t^{n-1},t^n; L^2(\Gamma))}^2 + h^{2s} \|\partial_t u \|^2_{L^2(t^{n-1},t^n; H^{1/2+s}(\Omega))} + \tau \|[\rho_h^{n-1}]\|_{L^2(\Gamma)}^2 + \frac{C_M}{8} \tau \|[\rho_h^{n}- \rho_h^{n-1}]\|_{L^2(\Gamma)}^2 
\end{align*} 
In the above, we also used \eqref{eq:app_bound_sh}. The last term in \eqref{eq:error_eq_1} is bounded by the Lipschitz continuity of $f_{\Gamma} $ and by writing $u_h^{n-1} - u^n = \rho_h^{n-1} + (\mathcal{I}_h u^{n-1} -u^{n-1}) + (u^{n-1} - u^n)$. 
\begin{multline}
T_3 
\lesssim \frac{C_M}{4}\tau  \|[\rho_h^n-\rho_h^{n-1}]\|^2_{L^2(\Gamma)} + \tau \|[\rho_h^{n-1}]\|_{L^2(\Gamma)}^2 \\ + \tau h \|u^{n-1}\|^2_{H^1(\Omega_i \cup \Omega_e)} + \tau \|[\partial_t u ]\|_{L^2(t^{n-1},t^n;L^2(\Gamma))}^2. 
\end{multline} 
The consistency error is bounded in \Cref{lemma:bounding_W3}. Collecting the above bounds, summing over $n$, and utilizing Gronwall's inequality yields the estimate.
\end{proof}

\section{Numerical results}\label{sec:numerics}
Here we present a series of numerical examples demonstrating different
properties of the DG method for the EMI model. First, error convergence of the
discretization is verified. \Cref{ex:onecell} and \Cref{ex:lowreg} consider a single cell
setting \eqref{eq:EMI} which was analyzed theoretically in previous sections.
In \Cref{ex:twocell}, we show that the DG scheme easily
extends to the EMI model of multiple cells in contact. \Cref{ex:mg} demonstrates that 
the resulting linear systems can be robustly solved using algebraic multigrid
preconditioners. 
{Finally, as a step towards realistic applications, the DG discretization of the EMI model is 
used to study stimulus propagation in a sheet of 3D cells in \Cref{ex:eirill}.}
In all the examples,  we apply the symmetric interior penalty
DG method, i.e. $\epsilon=1$ in \eqref{eq:poisson_DGk}, with the stabilization parameter $\gamma=20$. 


\begin{example}[Single cell]\label{ex:onecell}
\normalfont
We let $\Omega=(0, 1)^2$ and $\Omega_i$ be a plus-shaped cell 
  with width and height 0.75 and consider  
  \begin{equation}\label{eq:onecell}
  \begin{aligned}
      u_e = \sin\left(\pi(x+y)\right)\exp\left(-\omega t\right),\quad
      u_i = \cos\left(2\pi(x-y)\right))
  \end{aligned}
  \end{equation}
  as the solution of \eqref{eq:EMI} where the source terms $f_e$, $f_i$ and
  $f_{\Gamma}$ are computed for coefficients $\kappa_e=1$, $\kappa_i=2$, $C_M=1$ and
  $\omega=10^{-6}$. We note that \eqref{eq:onecell} requires an additional source term
  in the interface flux condition, i.e. $\kappa_i\nabla u_i\cdot\bm n-\kappa_e\nabla u_e\cdot\bm n=g_{\Gamma}$
  on $\Gamma \times (0, T)$, as well as in the Neumann boundary condition ,
  i.e. $-\kappa_e\nabla u_e\cdot\bm n_e=g_{\Gamma_e}$ on $\Gamma_e\times (0, T)$.
  The geometry together with unstructured initial mesh used in the uniform mesh refinement
  and the solution at time $t=0$ are shown in \Cref{fig:onecell}. 
  \begin{figure}[ht]
    \begin{center}
\includegraphics[height=0.225\textwidth]{./img/tikz_domain_AinvluLscale1eps_value0.5get_eigenvalues-1k1_value1k2_value2nrefs1pdegree1poincare_scale1preconditionera.pdf}
      \hspace{5pt}
    \includegraphics[height=0.225\textwidth]{./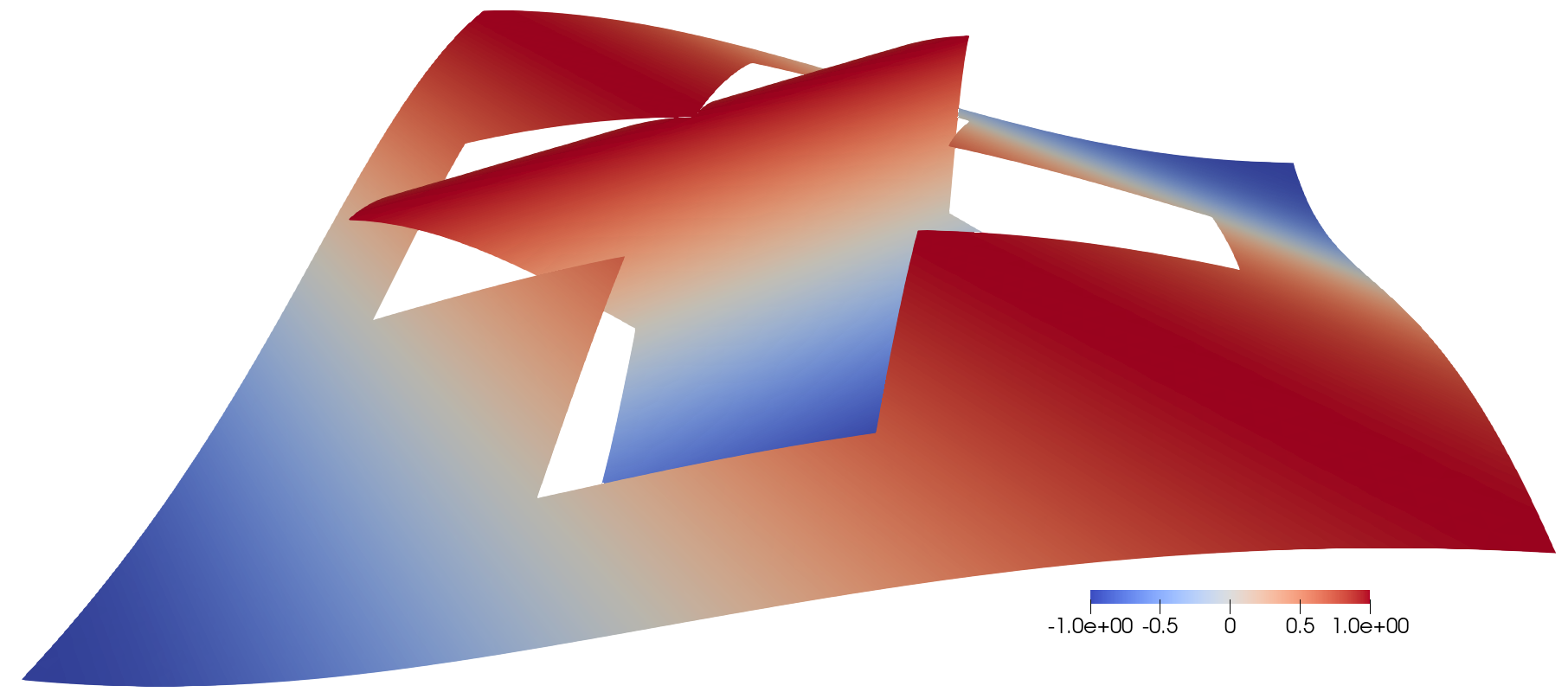}
    \end{center}
    \caption{
      Single cell EMI model \eqref{eq:EMI}.
      Problem geometry together with initial mesh (left) and solution at $t=0$ (right)
      of the test problem in \Cref{ex:onecell}.
    }
    \label{fig:onecell}
  \end{figure}

  With timestep size $\tau=h/10$ we evaluate spatial errors of the backward Euler
  discretization of \eqref{eq:EMI} at the final time $T=10^{-2}$. For polynomial
  degrees $k=1, 2, 3$,  \Cref{tab:onecell} shows that the proposed DG discretization
  yields the expected convergence rates of order $k$ in the DG-seminorm \eqref{eq:dgseminorm}
  while order $k+1$ is observed in the $L^2$-norm.
  \begin{table}[ht]
    \begin{center}
      \scriptsize{
        \renewcommand{\arraystretch}{1.1}
        \setlength{\tabcolsep}{3pt}
        \begin{tabular}{c|c|c|c|c|c|c}
          \hline
          \multirow{2}{*}{$l$} & \multicolumn{3}{c|}{$\lvert u - u_h \rvert_{\mathrm{DG}}$} & \multicolumn{3}{c}{$\lVert u - u_h \rVert_{L^2}$}\\
          \cline{2-7}
          & $k=1$ & $k=2$ & $k=3$ & $k=1$ & $k=2$ & $k=3$\\
          \hline
0 & 1.26e+00(--)   & 1.03e-01(--)   & 1.11e-02(--)   & 3.48e-02(--)   & 1.21e-03(--)   & 1.09e-04(--)   \\
1 & 5.62e-01(1.35) & 2.48e-02(2.37) & 1.09e-03(3.84) & 8.21e-03(2.4)  & 1.41e-04(3.57) & 5.16e-06(5.08) \\
2 & 2.78e-01(1.09) & 6.18e-03(2.15) & 1.54e-04(3.04) & 2.23e-03(2.02) & 1.78e-05(3.21) & 3.79e-07(4.05) \\
3 & 1.44e-01(1.01) & 1.72e-03(1.97) & 2.24e-05(2.97) & 6.31e-04(1.94) & 2.63e-06(2.94) & 2.90e-08(3.95) \\
          \hline
        \end{tabular}
      }
    \end{center}
    \caption{
      Spatial errors of the backward Euler DG discretization of the EMI
      model \eqref{eq:EMI} setup in \Cref{ex:onecell}. Timestep size $\tau=h/10$ is
      used where $h$ is the mesh size at current refinement level $l$. The errors
      are evaluated at time $t=10^{-2}$. Estimated order of convergence is shown in the
      brackets.
    }
    \label{tab:onecell}
  \end{table}
\end{example}

We next consider approximation properties of the DG discretization for
EMI model with low regularity solutions.

\begin{example}[Low regularity]\label{ex:lowreg} 
\normalfont
With $\Omega_i=(0.25, 0.75)^2$
  we let $\Omega$ be an L-shaped domain obtained as complement of $(-1, 0)^2$
  in $(-1, 1)^2$, cf. \Cref{fig:lowreg}. Setting $\kappa_e=1$, $\kappa_i=2$, $C_M=1$ the
  data\footnote{
  As in \Cref{ex:onecell} we remark that the chosen solution requires an additional
  non-zero source term in order for the flux continuity on the interface to hold.
  } for \eqref{eq:EMI} are constructed based on the solution
  \begin{equation}\label{eq:lr}
    u_e = (1+t)r^{s}\sin(s\theta), u_i = u_e + \left(1+t\right)\left(r\sin(\theta) + r\cos(\theta)\right),
  \end{equation}
  where $r, \theta$ are the polar coordinates in $xy$-plane and $0<s<1$. We note that
  $u_e(\cdot, t)\in H^{1+s}(\Omega_e)$.
  \begin{figure}[ht]
    \begin{center}
\includegraphics[height=0.225\textwidth]{./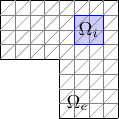}
      \hspace{5pt}
    \includegraphics[height=0.225\textwidth]{./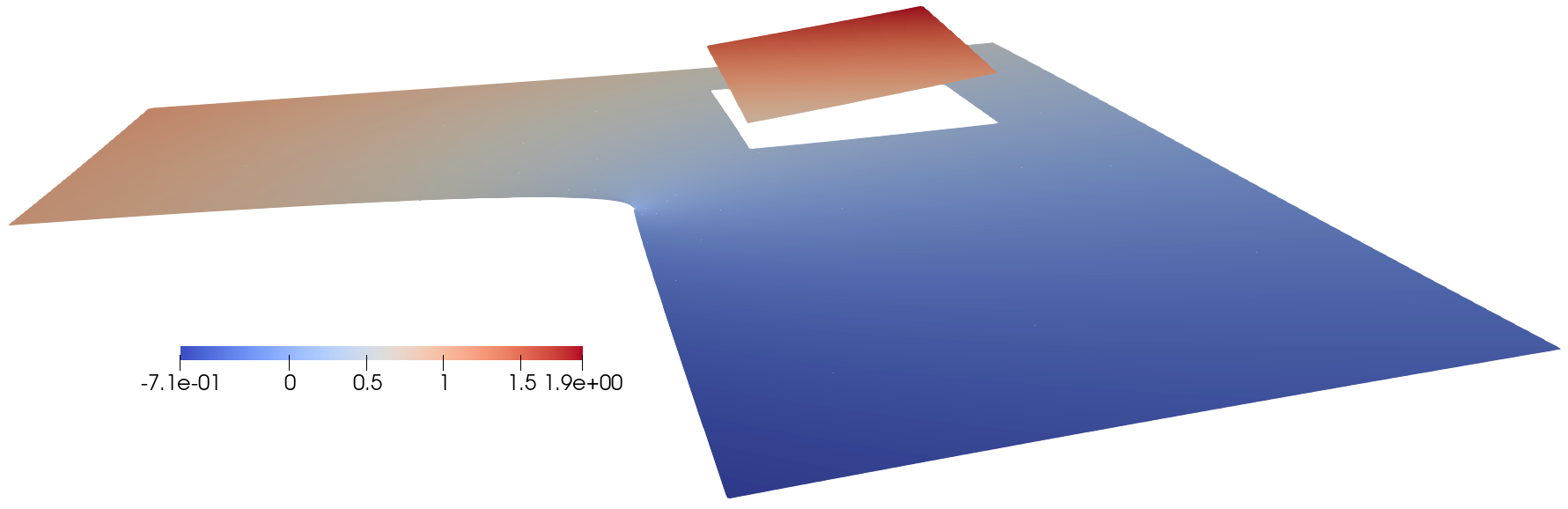}
    \end{center}
    \caption{
      Problem domain with structured initial mesh (left) and the low regularity (here \eqref{eq:lr} with $s=0.5$ is shown)
      solution of the EMI model \eqref{eq:EMI} at $t=0$ (right).
    }
    \label{fig:lowreg}
  \end{figure}

  Setting $\tau=10^{-5}$ and $T=10\tau$ and considering, for simplicity, \eqref{eq:EMI}
  with Dirichlet boundary conditions, the problem geometry is discretized by structured
  meshes (in order to obtain more stable convergence rates). Using linear elements \Cref{tab:lowreg}
  then confirms the expected convergence rate of order $s$ in the DG-seminorm \eqref{eq:dgseminorm}.
  \begin{table}[h]
    \begin{center}
      \scriptsize{
        \renewcommand{\arraystretch}{1.1}
        \setlength{\tabcolsep}{3pt}
        \begin{tabular}{c|c|c|c|c|c|c}
          \hline
          \multirow{2}{*}{$l$} & \multicolumn{3}{c|}{$\lvert u - u_h \rvert_{\mathrm{DG}}$} & \multicolumn{3}{c}{$\lVert u - u_h \rVert_{L^2}$}\\
          \cline{2-7}
          & $s=0.25$ & $s=0.5$ & $s=0.75$ & $s=0.25$ & $s=0.5$ & $s=0.75$\\
          \hline
0 & 1.55E-01(--)   & 1.39E-01(--)   & 7.35E-02(--)   & 8.47E-03(--)   & 9.30E-03(--)   & 5.90E-03(--)  \\
1 & 1.31E-01(0.24) & 1.00E-01(0.48) & 4.57E-02(0.69) & 3.85E-03(1.14) & 3.86E-03(1.27) & 2.20E-03(1.42)\\
2 & 1.11E-01(0.25) & 7.13E-02(0.49) & 2.80E-02(0.71) & 1.76E-03(1.13) & 1.59E-03(1.28) & 8.06E-04(1.45)\\
3 & 9.32E-02(0.25) & 5.06E-02(0.49) & 1.70E-02(0.72) & 8.18E-04(1.10) & 6.60E-04(1.27) & 2.93E-04(1.46)\\
4 & 7.84E-02(0.25) & 3.59E-02(0.50) & 1.02E-02(0.73) & 3.89E-04(1.07) & 2.77E-04(1.25) & 1.07E-04(1.46)\\
5 & 6.59E-02(0.25) & 2.54E-02(0.50) & 6.16E-03(0.74) & 1.89E-04(1.04) & 1.18E-04(1.23) & 3.90E-05(1.45)\\
6 & 5.54E-02(0.25) & 1.80E-02(0.50) & 3.69E-03(0.74) & 9.42E-05(1.01) & 5.06E-05(1.22) & 1.43E-05(1.45)\\
\hline
        \end{tabular}
      }
    \end{center}
    \caption{
      Spatial errors of the backward Euler DG discretization of the EMI
      model \eqref{eq:EMI} with low regularity $H^{1+s}$ solution setup in \Cref{ex:lowreg}.
      Timestep size $\tau=10^{-5}$ and linear ($k=1$) elements were used. The errors were
      evaluated at final time $T=10\tau$. Estimated order of convergence is shown in the brackets.      
    }
    \label{tab:lowreg}
  \end{table}
\end{example}

The EMI model \eqref{eq:EMI} can be readily applied to study dynamics
of a single cell or collection of disconnected cells ~\cite{agudelo2013computationally}.
However, for studying systems of tightly packed cells the small spaces separating
them, e.g. the gap junctions in cardiac tissue ~\cite{rohr2004role}, can
be modeled by placing the cells in contact and applying coupling conditions
on the new interface ~\cite{tveito2017cell}. We note that the combined interface
is then no longer a smooth manifold, cf. \Cref{fig:twocell}. As an example
of a system with gap junctions, we next consider the EMI model of two cells in contact
and its DG discretization.

\begin{example}[Cells in contact]\label{ex:twocell}
\normalfont
  The EMI model \eqref{eq:EMI} is extended to a system with $N$ non-overlapping cells
  following \cite{tveito2017cell}. The cells occupying subdomains
  $\Omega_i\subset\Omega$, $i=1, \dots, N$, $\Omega_i\cap\Omega_j=\emptyset$ {for $i\neq j$}  are contained in a bounded polygonal domain $\Omega \subset\mathbb{R}^d$, $d=2, 3$.
  Letting $\Omega_0 \equiv \Omega_e = \Omega \backslash \bigcup_{1\leq i\leq N} \Omega_i$
  denote the extracellular domain, we define interfaces $\Gamma_{(i, j)}=\partial\Omega_i\cap\partial\Omega_j$, $0\leq i<j\leq N$
  and the set $\mathcal{I}=\left\{(i, j):\,\,\,0 \leq i < j \leq N, \lvert \Gamma_{(i, j)}\rvert > 0 \right\}$.
  With $\Gamma=\bigcup_{(i, j)\in\mathcal{I}}\Gamma_{(i, j)}$ we let $\Gamma_0 =\partial\Omega_0 \setminus\Gamma$ and denote
  by $\bm n_0$ its outer unit normal vector. On each interface $\Gamma_{(i, j)}$, $(i, j)\in\mathcal{I}$ we 
  define a jump operator $[u]_{(i, j)} = u_i - u_j$ and orient the unit normal vector $\bm n_{(i, j)}$ of $\Gamma_{(i, j)}$ as
  outer with respect to $\Omega_i$. We then define a jump operator on the global interface
  as
  \begin{equation}\label{eq:jump_many}
    [u]|_{\Gamma_{(i, j)}} = [u]_{(i, j)}\quad \forall (i, j)\in\mathcal{I},
  \end{equation}
   and finally consider the EMI model
\begin{equation}\label{eq:EMI_many}
  \begin{alignedat}{2}
    -\nabla\cdot(\kappa_i\nabla u_i) &= f_i   && \quad \text{ in }\Omega_i \times (0,T), i=0, 1, \dots, N,  \\
    \kappa_i\nabla u_i\cdot \bm n_{(i, j)}   &= \kappa_j \nabla u_j\cdot \bm n_{(i, j)}   && \quad \text{ on } \Gamma_{(i, j)}\times (0,T), (i, j)\in \mathcal{I},  \\
  C_M \frac{\partial [u]}{\partial t} + f_\Gamma ([u]) & =  -\kappa_i \nabla u_i \cdot \bm n_{(i, j)}  && \quad  \text{ on }  \Gamma_{(i, j)} \times (0,T),  (i, j)\in \mathcal{I}, \\
    \nabla u_0 \cdot \bm n_0  & = 0 &&  \quad \text{ on } \Gamma_0 \times (0,T),   \\ 
    [u](x,0) & = \hat u^0 && \quad \text{ on } \Gamma_{(i, j)}, (i, j)\in \mathcal{I}.  
  \end{alignedat}
\end{equation}
We assume that volumetric sources $f_i \in L^2(0,T;L^2(\Omega_i))$,
symmetric positive definite conductivities $\kappa_i \in L^\infty(\Omega_i, \mathbb{R}^{d,d})$,
membrane sources $f^{(i, j)}_{\Gamma}$, positive capacitances $C^{(i, j)}_M$ are given together with the 
initial conditions $\hat u_{(i, j)}^0 \in H^{1/2}(\Gamma_{(i, j)})$. We note that in \eqref{eq:EMI_many}
$f_{\Gamma}$ is defined through restriction as $f_{\Gamma}|_{\Gamma_{(i, j)}}=f^{(i, j)}_{\Gamma}$, and similarly
for $C_M$ and $\hat u^0$. 
Furthermore, as in \eqref{eq:EMI} a compatibility condition $\int_{\Omega}f=0$,
where $f|_{\Omega_i}=f_i$, is required due to the Neumann boundary condition on $\Gamma_0$.
  
With the space $V^k_h$ in \eqref{eq:discretesubspaces}, {the form $a_h$ given in \cref{eq:poisson_DGk}} (re)defined to reflect the
jump operator \eqref{eq:jump_many} and to {include all the facets on the interfaces $\Gamma_{i,j}$ in the definition of $\mathcal{F}_{\Gamma,h}$}, the semi-discrete DG approximation of the
EMI problem \eqref{eq:EMI_many} and the related fully discrete approximation
based on the backward Euler interior penalty DG are identical to those of the
single cell EMI problem stated respectively in \eqref{eq:semi_discrete} and
\eqref{eq:b_e_dg}.

To investigate the spatial convergence of the DG method for EMI model with gap junctions we
consider a system with two connected cells shown in \Cref{fig:twocell}. The problem
data are manufactured according to \eqref{eq:EMI_many} using
\begin{equation}
  \begin{aligned}
    u_0 = t\sin\left(\pi(x+y)\right),\,
    u_1 = t\cos\left(2\pi(x-y)\right),\,
    u_2 = t\sin\left(2\pi(x+y)\right)\\
  \end{aligned}
\end{equation}
and setting $\kappa_0=1$, $\kappa_1=2$, $\kappa_2=3$ and $C_M=1$. With time step
size $\tau=h/10$ the simulations are run until $T=1$. Spatial errors at the final
time for DG approximants with polynomial degrees $k=1, 2, 3$ are reported in \Cref{tab:twocell}. Here, as
in the single cell case, we observe convergence rates of order $k$ and $k+1$ in
DG-seminorm \eqref{eq:dgseminorm} and $L^2$-norm respectively.

  
   \begin{figure}
    \begin{center}
    \includegraphics[height=0.225\textwidth]{./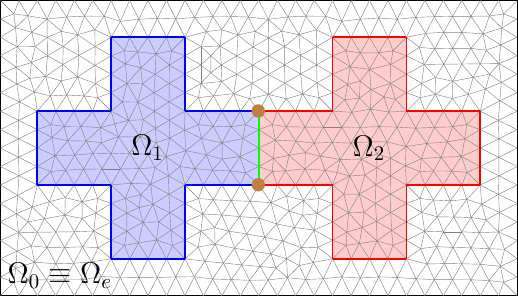}
    \hspace{5pt}
    \includegraphics[height=0.225\textwidth]{./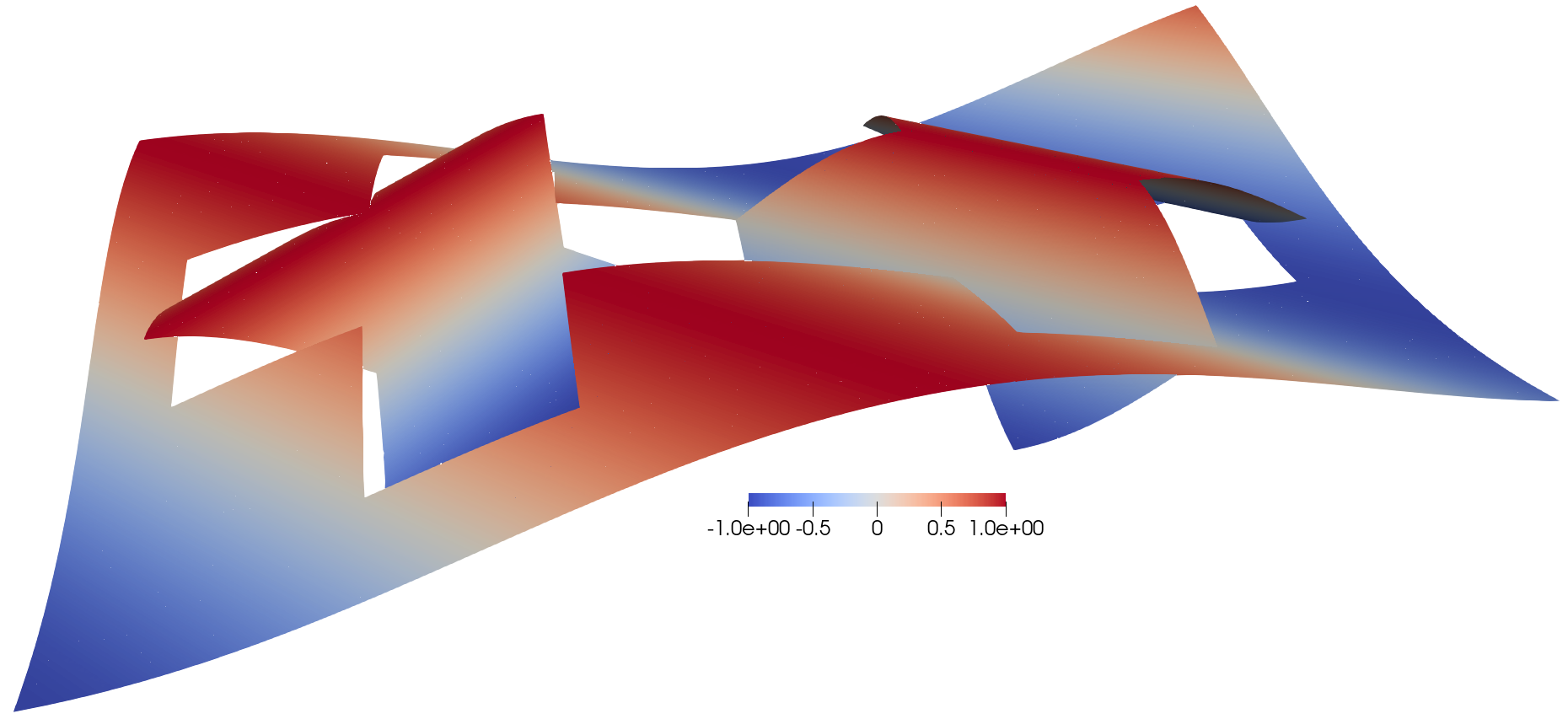}    
    \end{center}
    \caption{
      EMI model with gap junctions \eqref{eq:EMI_many}.
      Problem geometry together with initial mesh (left) and solution at $t=1$ (right)
      of the two-cell EMI test problem in \Cref{ex:twocell}. Extracellular
      interfaces $\Gamma_{(0, 1)}$, $\Gamma_{(0, 2)}$ are highlighted in blue and red.
      The interface $\Gamma_{(1, 2)}$ between cells $\Omega_1$ and $\Omega_2$ which idealizes a gap
      junction is shown in green. Brown nodes depict points of intersections of the three interfaces. 
    }
    \label{fig:twocell}
   \end{figure}

  \begin{table}
    \begin{center}
      \scriptsize{
        \renewcommand{\arraystretch}{1.1}
        \setlength{\tabcolsep}{3pt}
        \begin{tabular}{c|c|c|c|c|c|c}
          \hline
          \multirow{2}{*}{$l$} & \multicolumn{3}{c|}{$\lvert u - u_h \rvert_{\mathrm{DG}}$} & \multicolumn{3}{c}{$\lVert u - u_h \rVert_{L^2}$}\\
          \cline{2-7}
          & $k=1$ & $k=2$ & $k=3$ & $k=1$ & $k=2$ & $k=3$\\
          \hline
0 & 6.40e-01(--)   & 3.67e-02(--)   & 1.44e-03(--)   & 8.52e-03(--)   & 2.17e-04(--)   & 7.00e-06(--)  \\
1 & 3.26e-01(0.93) & 9.54e-03(1.86) & 1.83e-04(2.84) & 2.19e-03(1.87) & 2.79e-05(2.83) & 4.47e-07(3.79)\\
2 & 1.66e-01(0.87) & 2.46e-03(1.75) & 2.37e-05(2.64) & 5.70e-04(1.74) & 3.64e-06(2.63) & 2.93e-08(3.53)\\
3 & 8.34e-02(1.07) & 6.18e-04(2.15) & 2.95e-06(3.24) & 1.43e-04(2.15) & 4.53e-07(3.24) & 1.81e-09(4.33)\\
          \hline
        \end{tabular}
      }
    \end{center}
    \caption{
      Spatial errors of the backward Euler DG discretization of the EMI
      model \eqref{eq:EMI_many} setup in \Cref{ex:twocell}. Timestep size $\tau=h/10$ is
      used where $h$ is the mesh size at current refinement level $l$. The errors
      are evaluated at time $t=1$. Estimated order of convergence is shown in the
      brackets.
    }
    \label{tab:twocell}
  \end{table}
\end{example}


A consistent way of enforcing the constraint $\int_{\Omega_e} u_h^n = 0$ required
for uniqueness of the solution of the backward Euler DG approximation \eqref{eq:b_e_dg}
is provided by a Lagrange multiplier method. In \Cref{rmrk:poincare} we next
discuss design of robust preconditioners for the resulting saddle point problem.

\begin{remark}[Robust preconditioning]\label{rmrk:poincare} The variational
  problem due to the  backward Euler DG approximation \eqref{eq:b_e_dg} and to the 
  Lagrange multiplier method for the constraint $\int_{\Omega_e} u_h^n = 0$
  reads: Find $u_h\in V^k_h$ and $p_h\in Q_h=\mathbb{R}$ such that
  \begin{equation}\label{eq:lm}
    \begin{aligned}
      &\int_{\Gamma} \frac{C_M}{\tau} [u^n_h][v_h] + a_h( u^n_h, v_h)  + \int_{\Omega_e}v_h p_h  =\\
&\int_{\Gamma} \frac{C_M}{\tau} [u^{n-1}_h][v_h] 
 + \int_{\Omega} f^n v_h  + \int_{\Gamma} f_\Gamma ([u_h^{n-1}]) [v_h], \quad\forall v_h\in V^k_h,\\ 
&\int_{\Omega_e} u^n_h q_h = 0, \quad\forall q_h\in\mathbb{R}.
   \end{aligned}
  \end{equation}
  By verifying the Brezzi conditions \cite{Boffi:book}, the system \eqref{eq:lm} can    
  be shown to be well-posed with the solution spaces $V^k_h$, $Q_h$ equipped with the norms
  \begin{equation}
  \begin{aligned}\label{eq:simple_dg}
    \lVert u_h \rVert_{V_h} = \left(\lVert u_h \Vert^2_{L^2(\Omega_e)} + \lvert u_h \rvert^2_{\mathrm{DG}} +
    \frac{C_M}{\tau} \lVert [u_h]\rVert^2_{L^2(\Gamma)}\right)^{1/2}\!\!\!\!,\, 
    \lVert q_h \rVert_{Q_h} = \lVert q_h \rVert_{L^2(\Omega_e)}.
  \end{aligned}
  \end{equation}
  Following the ideas of operator preconditioning ~\cite{mardal2011preconditioning, hiptmair2006operator}, a Riesz map
  with respect to \eqref{eq:simple_dg} defines a suitable (block-diagonal) preconditioner
  for \eqref{eq:lm}. However, with \eqref{eq:simple_dg} the constant in the Brezzi
  coercivity condition can be seen to depend on the Poincar{\'e}
  constant, cf. \Cref{lemma:poincare_discrete}. In turn, as illustrated in \Cref{tab:poincare}, performance
  of the preconditioner deteriorates on long and thin domains. At the same time,
  such domains are highly relevant in practical applications, e.g. simulations of
  signal propagation in neurons ~\cite{buccino2019does} through \eqref{eq:EMI} or in sheets of cardiac
  cells~\cite{tveito2017cell, huynh2023convergence} using \eqref{eq:EMI_many}.   

  \begin{figure}
    \centering
    \includegraphics[height=0.165\textwidth]{./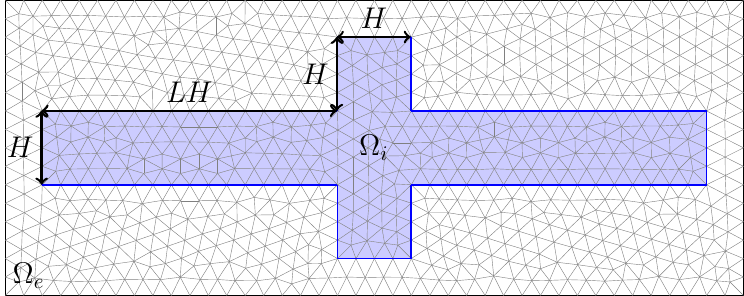}
    \hspace{5pt}    
    \includegraphics[height=0.165\textwidth]{./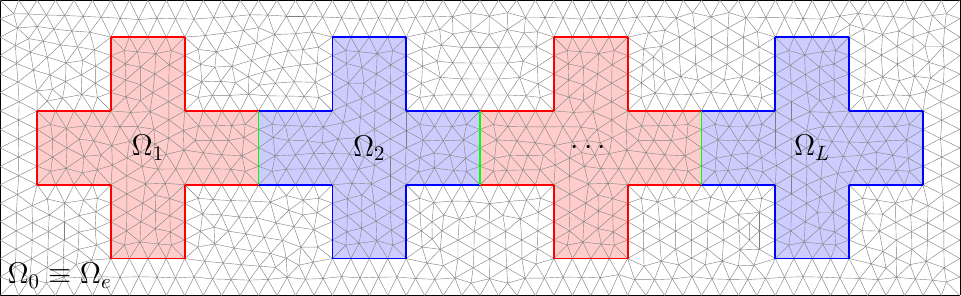}    
    \caption{
      Simplified problem geometries representing long and thin domains
      encountered in EMI applications. The domain diameter grows due to
      (left) the elongated shape of a single cell (modeling spatial characteristics
      of neurons, see \cite{buccino2019does}) or (right) stacking of cells in one direction
      (as is common in sheet simulations, e.g. \cite{tveito2017cell, huynh2023convergence}).
    }
    \label{fig:poincare}
  \end{figure}
  
  To address the domain sensitivity we consider the solution space $V^k_h\times \mathbb{R}$ with norms
  \begin{equation}
  \begin{aligned}\label{eq:poincare_dg}
    \lVert u_h \rVert_{V_h} = \left(\alpha^2\lVert u_h \Vert^2_{L^2(\Omega_e)} + \lvert u_h \rvert^2_{\mathrm{DG}}
    + \frac{C_M}{\tau} \lVert [u_h]\rVert^2_{L^2(\Gamma)}\right)^{1/2}\!\!\!\!,\, 
    \lVert q_h \rVert_{Q_h} = \frac{1}{\alpha}\lVert q_h \rVert_{L^2(\Omega_e)},
  \end{aligned}
  \end{equation}
  where $\alpha>0$ reflects the Poincar{\'e} constant. In particular, in the following,
  we use the approximation $\alpha\sim \text{diam}(\Omega_e)^{-1}$. Results reported in
  \Cref{tab:poincare} confirm that the preconditioner based on \eqref{eq:poincare_dg}
  provides uniform performance on the tested elongated domains.

  \begin{table}[h]
    \centering
    \scriptsize{
      \begin{tabular}{l|ccccc||ccccc}
       \hline
       \multirow{2}{*}{\backslashbox{norm}{$L$}} & \multicolumn{5}{c||}{Single cell} & \multicolumn{5}{c}{Connected cells}\\
       \cline{2-11}
       & 2 & 4 & 8 & 16 & 32 & 2 & 4 & 8 & 16 & 32\\
        \hline
        \eqref{eq:simple_dg}   & 19 & 19 & 20 & 23 & 28 & 19 & 28 & 39 & 59 & 98\\
        \eqref{eq:poincare_dg} & 23 & 23 & 23 & 23 & 23 & 19 & 19 & 19 & 19 & 19\\
      \hline
      \end{tabular}
    }
    \caption{
      Number of MinRes iterations required for solving \eqref{eq:lm} (with $k=1$)
      on domains in \Cref{fig:poincare} using different exact Riesz map preconditioners.
      Performance of \eqref{eq:simple_dg}-based preconditioner deteriorates on long domains.
      MinRes iterations start from a 0 initial vector and terminate once the preconditioned
      residual norm is below $10^{-12}$. 
    }
    \label{tab:poincare}
  \end{table}
  \end{remark}
  
\begin{example}[Multigrid realization of preconditioners]\label{ex:mg}
\normalfont
  Using single-cell EMI model \eqref{eq:EMI} with problem geometry from \Cref{ex:onecell}
  we finally study performance of inexact Riesz map preconditioners for linear systems due
  to backward Euler DG discretization \eqref{eq:b_e_dg}. Having addressed domain sensitivity
  in \Cref{rmrk:poincare} the focus here shall be on robustness of algebraic multigrid
  preconditioners with respect to time step size $\tau$. We note that for linear systems
  stemming from CG discretization of the EMI models \cite{budivsa2024algebraic} prove $\tau$-uniform
  convergence of algebraic multigrid. Therein custom block smoothers are used to handle the 
  interface between the subdomains in particular in case of strong coupling (due to
  small time step size).

  In the following, the action of a given Riesz map preconditioner is approximated
  by a single application of a V-cycle multigrid solver. We consider
  the classical algebraic multigrid (AMG) \cite{amg} 
  as implemented in
  the Hypre library \cite{hypre} and smoothed aggregation algebraic multigrid
  (SAMG) \cite{samg} with implementation provided by PyAMG \cite{pyamg2023}. Each
  solver is then applied to the linear operator inducing the norm \eqref{eq:poincare_dg}. In
  addition, we also consider preconditioning based on equivalent norms (cf. \eqref{eq:coercivity})
  which utilize the bilinear form $a_h$ in \eqref{eq:poisson_DGk}
  \begin{equation}
  \begin{aligned}\label{eq:poincare_a}
    \lVert u_h \rVert_{V_h} = \left(\alpha^2\lVert u_h \Vert^2_{L^2(\Omega_e)} + a_h(u_h, u_h) +
\frac{C_M}{\tau} \lVert [u_h]\rVert^2_{L^2(\Gamma)}\right)^{1/2}\!\!\!\!,\, 
    \lVert q_h \rVert_{Q_h} = \frac{1}{\alpha}\lVert q_h \rVert_{L^2(\Omega_e)}.
  \end{aligned}
  \end{equation}
  We note that the multiplier block of the preconditioners (being of size 1)
  is inverted exactly and only the leading block is realized by multigrid.

  The different preconditioners are compared in \Cref{fig:mg} in terms of
  iteration counts of the preconditioned MinRes solver. 
  Notably, irrespective of the Riesz map or
  the tested multigrid approximation the number of iterations appears bounded
  in the mesh and time step size.
  \begin{figure}[ht]
    \centering
    \includegraphics[width=\textwidth]{./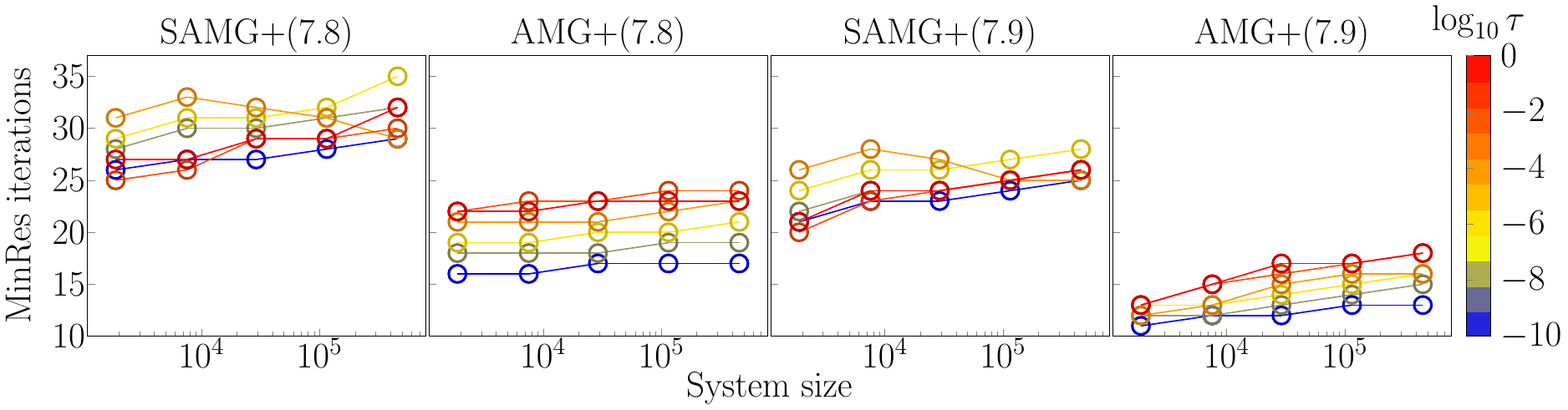}
    \caption{
      Performance of algebraic multigrid preconditioners for backward Euler
      DG ($k=1$) discretization \eqref{eq:b_e_dg} of single cell EMI model from \Cref{ex:onecell}.
      Number of MinRes iterations until convergence (based on relative error tolerance of $10^{-12}$)
      is shown for different mesh resolutions and time step sizes $\tau$. 
      Riesz maps based on norms \eqref{eq:poincare_dg} and \eqref{eq:poincare_a} are
      approximated using classical and smoothed aggregation algebraic multigrid solvers
      (AMG \cite{amg}, SAMG \cite{samg}).
    }
    \label{fig:mg}
  \end{figure}
\end{example}

\begin{example}[3D cell sheet]\label{ex:eirill}
  \normalfont
Motivated by tissue structures (often monolayers) studied in heart-on-chip
experiments, e.g.~\cite{grosberg2011ensembles}, we apply the EMI model \eqref{eq:EMI_many} to
a sheet of three-dimensional cardiac cells connected by gap junctions. 
The cell sheet consists of 15 cells, $5$ columns of cells in the $x$-direction and $3$ rows of
cells in the $y$-direction, using a single layer of cells in the $z$-direction. Each cardiac cell
is \textit{plus}-shaped in the $xy$-plane, connected through gap junctions both vertically
and horizontally. The cell sheet is visualized in \Cref{fig:geometry-3D}.
\begin{figure}
    \includegraphics[height=0.255\textwidth]{./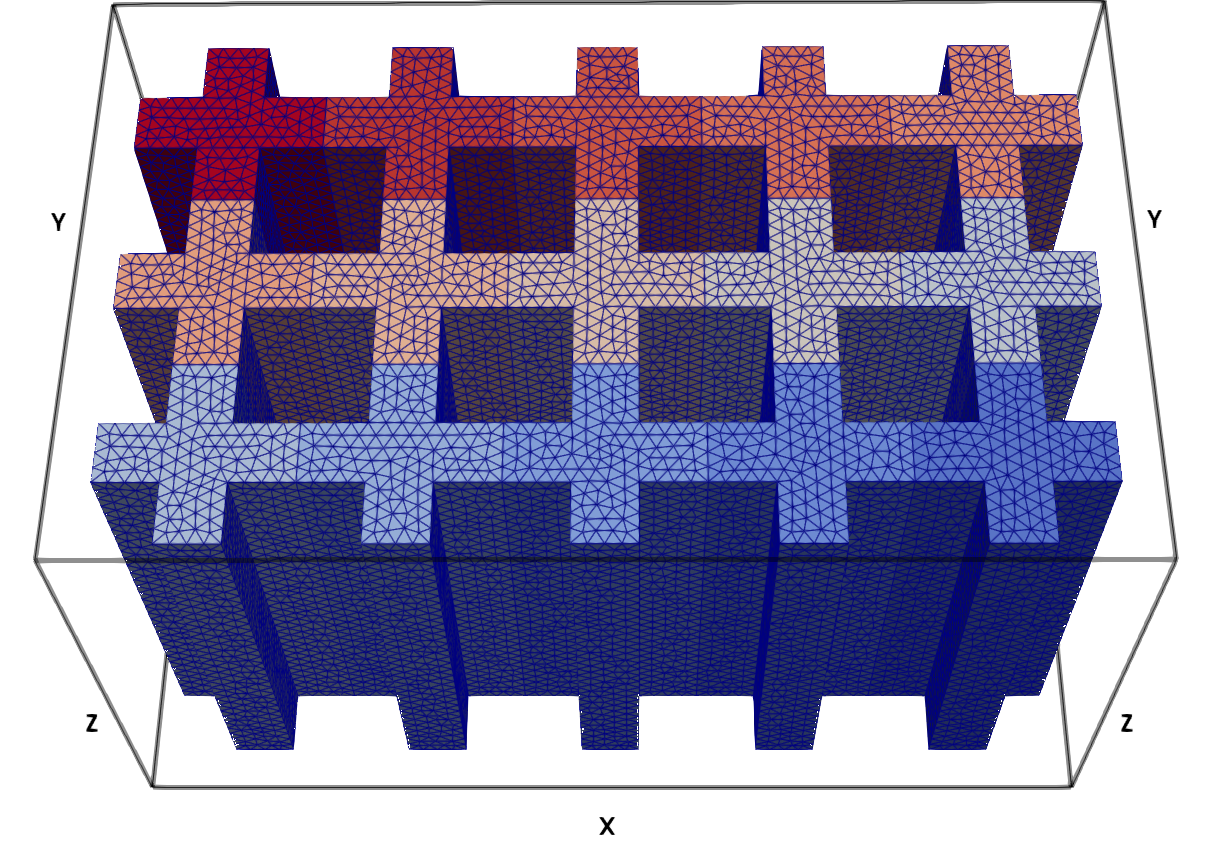}
    \hspace{5pt}
    \includegraphics[height=0.255\textwidth]{./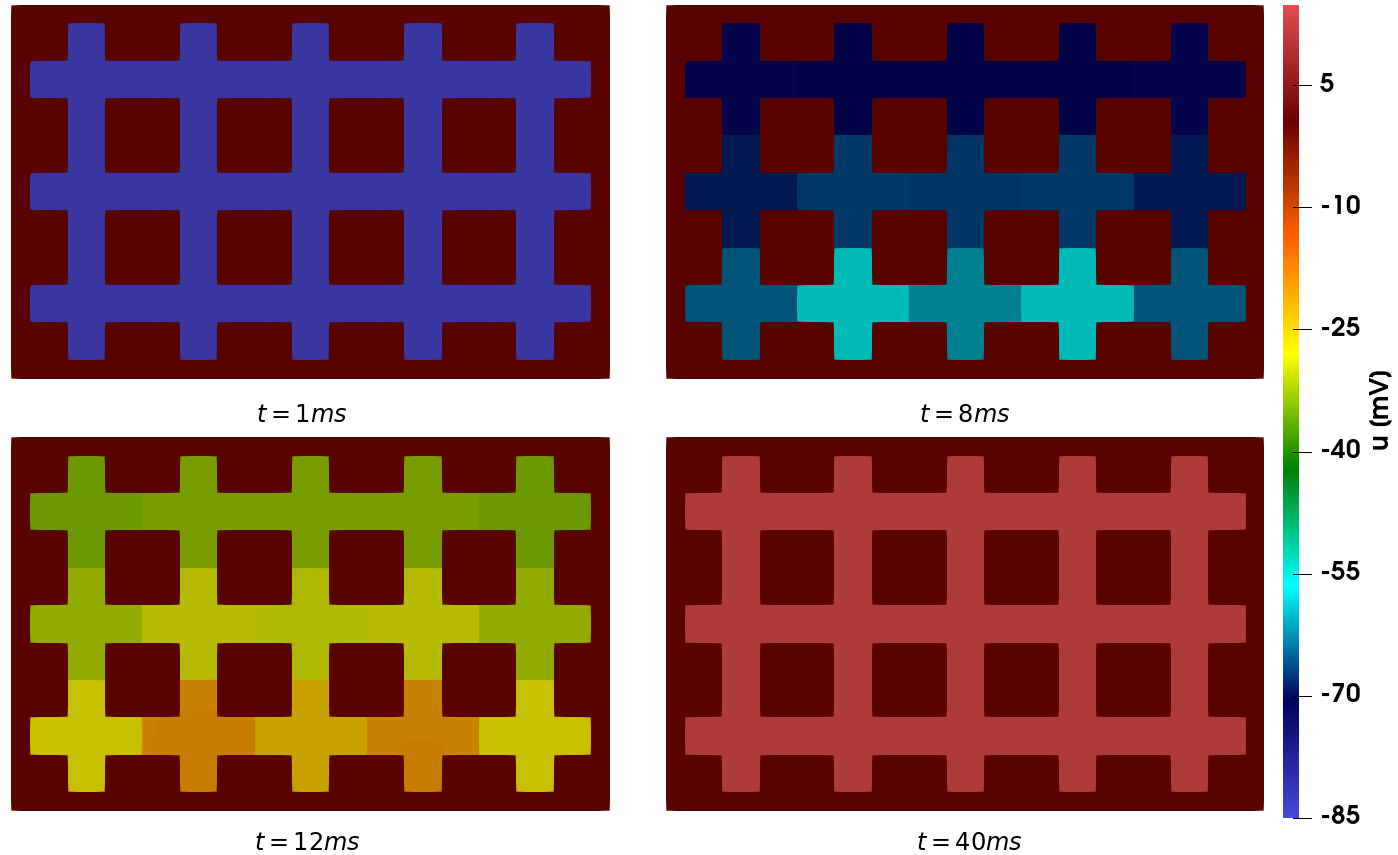}            
    \caption{
    Stimulus propagation in the sheet of 15 cardiac cells.
    (Left) Problem geometry with cells $\Omega_i$, $1\leq i \leq 15$ colored
    by $i$. (Right) Solution of the EMI model \eqref{eq:EMI_many} using the
    ODE model based on \cite{jaeger2019properties} at different time instants.
    The observed dynamics are due to time-dependent stimulus \eqref{eq:stimulus}
    applied to the second and fourth cells in the bottom row. Slice through
    the midplane is shown.
    }
    \label{fig:geometry-3D}
\end{figure}
Both the height and width of the cells were set to $\units{24 \cdot 10^{-4}}{cm}$, while
the length in the $z$-direction was $\units{64 \cdot 10^{-4}}{cm}$. The mesh resolution was
$4.3 \cdot 10^{-4}\,\mathrm{cm}$ which for DG discretization with linear elements led to
$1.9 \cdot 10^{6}$ degrees of freedom. 

The cardiac ODE model was inspired by ~\cite{jaeger2019properties}, reusing several
of their physiological parameters. This includes the intracellular and extracellular conductivities
in \eqref{eq:EMI_many}:
\begin{equation}
    \kappa_i =
    \begin{cases}
        \units{20}{mS / cm}
        & i = 0
        ,\\
        \units{4}{mS / cm}
        & 1 \leq i \leq 15,
    \end{cases}
\end{equation}
as well as the membrane capacitance and capacitance over the gap junctions:
\begin{equation}
    C_M^{(i, j)} =
    \begin{cases}
        \units{1}{\mu F/ cm^2}
        & (0, j)\in\mathcal{I}
        ,\\
        \units{0.5}{\mu F/ cm^2}
        & (i, j)\in\mathcal{I}, 0 < i < j.
    \end{cases}
\end{equation}
A passive current flows over the gap junctions, according to
\begin{equation}\label{eq:I_G}
    f^{(i, j)}_\Gamma \left([u] \right) = \frac{1}{R_G} \left([u] - E_G\right), 
    \quad \text{ on }
    \Gamma_{(i, j)}\times (0,T), 
    \quad (i, j)\in \mathcal{I},
    \quad 0 < i < j,
\end{equation}
with a resistance of 
$R_G = \units{0.05}{k \Omega cm^2}$ 
and the reversal potential over the gap junctions was set to 
$E_G = \units{0}{mV}$. 
The Aliev-Panfilov cardiac membrane model~\cite{aliev_simple_1996} was used on the intersection between the cells and the extracellular space, 
$\Gamma_{(0, j)}$, $(0, j) \in \mathcal{I}$. The model uses a unitless parameter $\alpha$ following the membrane potential according to
\begin{equation}\label{eq:alpha}
    \alpha \equiv \frac{[u] + E_M}{A}, 
    \quad \text{ on }\Gamma_{(0, j)} \times (0,T),
    \quad (0, j) \in \mathcal{I}, 
\end{equation}
where $E_M = \units{-85}{mV}$ is the reversal potential and $A = \units{100}{mV}$ determines the amplitude of the action potential. The membrane potential evolves in time with the unitless parameter $\beta$:
\begin{equation}\label{eq:aliev}
\begin{alignedat}{2}
    \frac{\partial [u]}{\partial t} &= c_t A
        \Big(I - k \alpha (\alpha - \eta_1)(\alpha - \eta_2) - \alpha\beta\Big), 
        \quad \text{ on }  \Gamma_{(0, j)} \times (0,T) , \,\,  (0, j) \in \mathcal{I}, 
\end{alignedat}
\end{equation}
where $\beta$ is governed by 
\begin{equation}
\frac{\partial \beta}{\partial t} =  - c_t 
\left(\mu_0 + \frac{\mu_1\beta}{\alpha + \mu_2}\right)
\Big(\beta  + k\alpha^2 - k\alpha(\eta_1 + \eta_2)\Big).
\end{equation}
Here, 
$c_t = \units{0.0775}{ms^{-1}}$
is a time constant, and the remaining constants were set to 
$k = 8$, 
$\eta_1 = 0.13$, 
$\eta_2 = 1$, 
$\mu_0 = 0.002$, 
$\mu_1 = 0.2$, and 
$\mu_3 = 0.3$.
The unitless time-dependent parameter $I$ determines the strength of the stimulus current.

The system is initially, at time $t_0 = \units{0}{ms}$, in its equilibrium state, i.e.
\begin{equation}\label{eq:Vm-equilibrium}
    [u](t_0) = 
    \begin{cases}
        \units{-85}{mV}&  
        \quad\text{ on } \Gamma_{(0, j)}, 
        \quad (0, j) \in \mathcal{I},
        \\
        \quad \ \units{0}{mV}& 
        \quad\text{ on } \Gamma_{(i, j)},
        \quad (i, j)\in \mathcal{I},
        \,\ 0 < i < j, 
    \end{cases}
\end{equation}
with $\beta(t_0) = 0$. To stimulate the system we evolve $I$ as 
\begin{equation}\label{eq:stimulus}
    I(t) = 
    \begin{cases}
        50 
        &\text{ for } 
        \units{5}{ms} < t < \units{15}{ms},\\
        0
        &\text{ otherwise}.
    \end{cases}
\end{equation}
However, the stimulus acts locally as only the second and fourth of the bottom cells were stimulated,
injecting the current along the entire bottom side of those cells, cf. \Cref{fig:geometry-3D}. The source terms $f_i$ are set to zero, and the simulations used homogeneous Dirichlet boundary conditions.

The coupled PDE-ODE system \eqref{eq:EMI_many} was solved by Godunov splitting, where the ODE step
used adaptive time-step integrators from LSODA ~\cite{petzold1983automatic} while the linear system
due to backward Euler discretization of the PDE part was solved by preconditioned conjugate gradient (PCG)
solver with the AMG preconditioner analogous to \eqref{eq:poincare_a}. The PCG solver used the solution from the
previous time step as an initial guess and an absolute error tolerance of $10^{-10}$ as the convergence criterion. 
Due to this setting, the time evolution of PCG iteration count closely follows the stimulus \eqref{eq:stimulus}.
The absence of stimulus at the beginning of the simulation ($t < \units{5}{ms}$) results in a steady state and, in turn, immediate convergence
from the previous (initial) state.  The onset of the stimulus ($t = \units{5}{ms}$) leads to a jump in the iteration count
as well, cf. \Cref{fig:iterations-3D}. We note that, in line with the observations in \Cref{ex:mg},
the time-step size bears little effect on the iterations.

Excitation of the tissue due to applied stimulus is visualized in \Cref{fig:geometry-3D} and
\Cref{fig:iterations-3D}. The figures use a time step of $\tau = \units{0.01}{ms}$, though no differences were observed in the solution plots depending on the time step. 
The excitation of the stimulated second and fourth bottom cells spread through the gap junctions to the entire cell sheet (seen to the right in \Cref{fig:geometry-3D}), leading to a steep rise in the membrane potential of the cells. This characteristic sudden but prolonged rise in the membrane potential  
(as seen to the right in \Cref{fig:iterations-3D})
illustrates the onset of a cardiac action potential. 

\begin{figure}[H]
\includegraphics[height=0.31\textwidth]{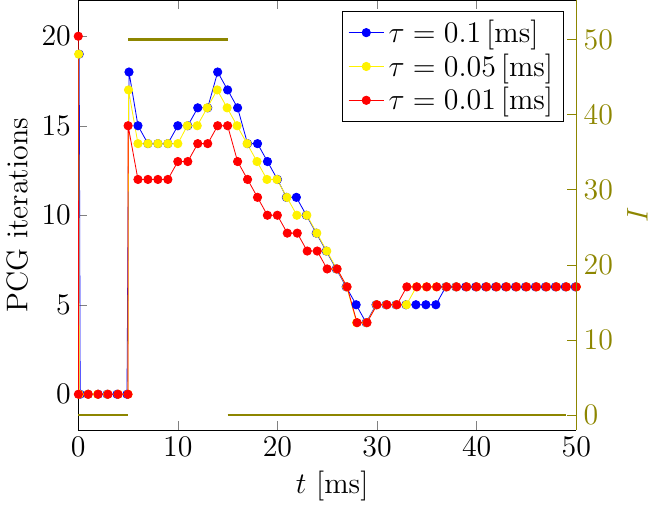}
\hspace{5pt}
\includegraphics[height=0.31\textwidth]{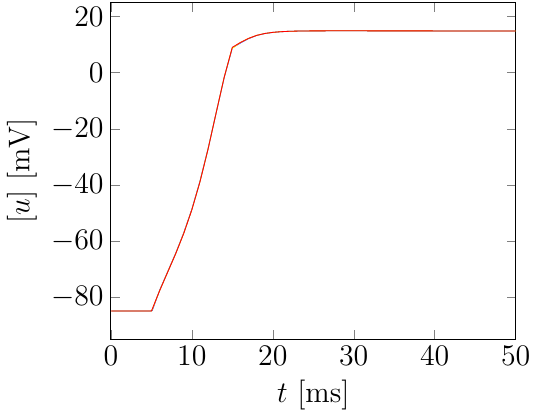}
\caption{
        EMI simulations of 15 cardiac cells.
(Left) History of preconditioned conjugate iterations compared
to the time-dependent stimulus (plotted in green against the right $y$-axis).
(Right) Evolution of the transmembrane potential sampled at the lower-left cell
in \Cref{fig:geometry-3D}.
}
\label{fig:iterations-3D}
\end{figure}

\end{example}

\section{Conclusions} 
The analysis of an interior penalty discontinuous Galerkin method applied to the EMI or cell-by-cell model is presented.  In particular, we show well-posedness of the semi-discrete formulation and of a backward Euler discretization. Error estimates under low spatial regularity are also established. We complement our analysis with several numerical examples where   robust preconditioned iterative solvers are proposed and studied. 

\bibliographystyle{siam}
\bibliography{references}
\appendix 
\section{Details for the interface equality \eqref{eq:interface_boundary_terms}}
\label{appendix:interface_cond} 
We focus on the case  $F \in \mathcal{F}_{\Gamma,h}$ with $F = \partial K_i \cap \partial K_e$ ($K_i \subset \Omega_i)$. The case $F \subset \Gamma_e$ is handled similarly.  We write
\begin{align} \label{eq;interface_expansion}
& \int_{F} [w_h \mathcal K_{\delta}^d(\bm \sigma(u))] \cdot \bm n_F  \\&  =  \nonumber \int_{F} (w_h \mathcal K_{\delta}^d(\bm \sigma (u)
))_{\vert K_e} \cdot \bm n_{K_e}  +  \int_{F} (w_h \mathcal K_{\delta}^d(\bm \sigma (u)))_{\vert K_i} \cdot \bm n_{K_i} 
\end{align}
The next steps focus on the first term above. The second term is handled in the same manner. 

\textit{Step 1}. Show that \begin{equation}
    \lim_{\delta \rightarrow 0} \int_{F} (w_h \mathcal K_{\delta}^d(\bm \sigma (u)))_{\vert K_e} \cdot \bm n_{K_e}  =\langle  \bm \sigma(u) \cdot \bm n_{K_e}, w_h{}_{\vert K_e} \rangle_{F} \label{eq:appendix_step1}
\end{equation}
Indeed, since $\mathcal K_{\delta}^d(\bm \sigma (u))$ is smooth, Green's theorem allows us to write 
\begin{equation} \label{eq:lim_Kdelta_F}
    \lim_{\delta \rightarrow 0} \int_{F} (w_h \mathcal K_{\delta}^d(\bm \sigma (u)
))_{\vert K_e} \cdot \bm n_{K_e} =  \lim_{\delta \rightarrow 0} \langle \mathcal K_{\delta}^d(\bm \sigma (u))
  \cdot \bm n_{K_e} , w_h{}_{\vert K_e} \rangle_{F}.   
\end{equation}
Considering the definition \eqref{eq:def_duality_pair}, the commutativity property \eqref{eq:commutative_prop_moll} and \cite[Theorem 3.3]{ern2016mollification} on the convergence of the mollification operators, we obtain  
\begin{multline} \label{eq:lim_delta_facet}
\lim_{\delta \rightarrow 0} \langle  (\mathcal K_{\delta}^d(\bm \sigma (u)) - \bm \sigma(u) )
  \cdot \bm n_{K_e}, w_h{}_{\vert K_e} \rangle_{F}   \\ = \lim_{\delta \rightarrow 0 } \int_{K_e}  ( \mathcal K_{\delta}^d(\bm \sigma (u)) - \bm \sigma(u) ) \cdot \nabla L_F^{K_e}( w_h{}_{\vert K_e})   \\  + \lim_{\delta \rightarrow 0 } \int_{K_e}
(\mathcal K_{\delta}^b ( \nabla \cdot \bm \sigma (u)) -  \nabla \cdot \bm \sigma(u) ) L_F^{K_e}( w_h{}_{\vert K_e})  = 0. 
\end{multline}
Using \eqref{eq:lim_delta_facet} in \eqref{eq:lim_Kdelta_F} yields \eqref{eq:appendix_step1}. 

\textit{Step 2}.  
Show that 
\begin{equation} \label{eq:interface_condition}
\langle  \bm \sigma(u) \cdot \bm n_{K_e}, w_h{}_{\vert K_e} \rangle_{F}    =  -  \int_{F}  \left( C_M \frac{\partial [u]}{\partial t} + f([u])\right)  w_h{}_{\vert K_e}. 
\end{equation}
To this end, let $L_F^{\Omega_e} (w_h) = L_F^{K_e} (w_h{}_{\vert K_e}) $ on $K_e$ and $0$ otherwise. From the properties of $L_F^{K_e}$, it is clear that $L_F^{\Omega_e} (w_h)$ belongs to the broken $W^{1,\rho'}(\mesh \cap \Omega_e)$ space and that $[L_F^{\Omega_e} (w_h)] = 0$ on $F \in \mathcal{F}_{e,h}$. Thus, from \cite[Lemma 1.23]{di2011mathematical}, it follows that $L_F^{\Omega_e} (w_h)\in W^{1,\rho'} (\Omega_e)$.  Consider 
\begin{align}
\langle  \bm \sigma(u) \cdot \bm n_{K_e}, w_h{}_{\vert K_e} \rangle_{F}   & =   \int_{K_e}  (\bm \sigma(u)  \cdot \nabla L_F^{K_e}(w_h) + \nabla \cdot (\bm \sigma(u) ) L_F^{K_e}(w_h)  )\mathrm{d}x \\ 
& = \int_{\Omega_e}  (\bm \sigma(u)  \cdot \nabla L_F^{\Omega_e}(w_h) + f_e L_F^{\Omega_e}(w_h)  )\mathrm{d}x,  \nonumber 
\end{align}
where we used that $\nabla \cdot (\bm \sigma (u) ) = - \nabla(\kappa_e \nabla u_e) = f_e$ in $L^2(\Omega_e)$. 
Since $\Omega_e$ is a Lipschitz domain, we use density of smooth functions $C^\infty(\overline{\Omega_e})$ in $W^{1,\rho'}(\Omega_e)$. Namely, let $\{w_m\} \in C^\infty(\overline{\Omega_e}) $ converge to $L_F^{\Omega_e}(w_h)$ in $W^{1,\rho'}(\Omega_e)$. Since $\bm \sigma(u) \in L^\rho (\Omega)$, $f  \in L^2(\Omega)$ and $W^{1,\rho'}(\Omega_e) \hookrightarrow L^2(\Omega_e)$ \cite[Theorem 2.31]{Ern:booki}, we can write the above as
\begin{align}
\langle  \bm \sigma(u) \cdot \bm n_{K_e}, w_h{}_{\vert K_e} \rangle_{F}  
& =  \lim_{m \rightarrow \infty} \int_{\Omega_e}  (\bm \sigma(u)  \cdot \nabla w_m + f  w_m  )\mathrm{d}x  \\ 
& = -   \lim_{m \rightarrow \infty}  \int_{\Gamma }  \left( C_M \frac{\partial [u]}{\partial t} + f_{\Gamma}([u])\right)  w_m{}_{\vert \Omega_e} \mathrm{d}s. \nonumber
\end{align}
The last equality follows by testing \eqref{eq:EMI_weak} with $v = w_m $ in $\Omega_e$ and $v = 0 $ in $\Omega_i$. 
Now note that from trace theorem, see for e.g.  \cite[Theorem 3.15]{Ern:booki}, the Dirichlet trace operator is bounded as a  map from  $W^{1,q}(D) \rightarrow L^{q}(\Gamma)$ for $D = \Omega_i$ or $D = \Omega_e$ and $q \in \{\rho, \rho'\}$ since $\rho, \rho' >1$. Consequently, we know that $ w_m{}_{\vert \Omega_e} \rightarrow L_F^{\Omega_e} (w_h)_{\vert \Omega_e}    \in  L^{\rho'}(\Gamma)$. Since by assumption $\partial_t u \in H^{1/2+s}(\Omega_e \cup \Omega_i)$, we have that $\gamma ( [\partial_t u]) \in H^{s}(\Gamma)$ which in turn implies $\gamma( \partial_t[ u ]) \in L^{\rho}(\Gamma)$. Along with the Lipschitz continuity of  $f_{\Gamma}$, this implies that $\frac{\partial [u]}{\partial t} + f_{\Gamma}([u]) \in L^{\rho}(\Gamma)$.  Thus, the above limit evaluates to 
\begin{align}\label{eq:evaluate_limit_appendix}
\langle  \bm \sigma(u) \cdot \bm n_{K_e}, w_h{}_{\vert K_e} \rangle_{F}  & = - \int_{\Gamma }  \left( C_M \frac{\partial [u]}{\partial t} + f_{\Gamma}([u])\right)  L_F^{\Omega_e}(w_h){}_{\vert \Omega_e} ds \\ 
& =    -\int_{F}  \left( C_M \frac{\partial [u]}{\partial t} + f([u])\right)  w_h{}_{\vert K_e}.  \nonumber 
\end{align}
The last equality follows since $L_F^{\Omega_e} (w_h) \vert_{\Gamma} $ is nonzero only on $F$.  This shows \eqref{eq:interface_condition}. 

\textit{Step 3}. Combining steps 1 and 2

Using  \eqref{eq:interface_condition} in \eqref{eq:appendix_step1} yields 
\begin{align}
\lim_{\delta \rightarrow 0} \int_{F} (w_h \mathcal K_{\delta}^d(\bm \sigma (u)))_{\vert K_e} \cdot \bm n_{K_e} & =\langle  \bm \sigma(u) \cdot \bm n_{K_e}, w_h{}_{\vert K_e} \rangle_{F} \\ &=   - \int_{F}  \left( C_M \frac{\partial [u]}{\partial t} + f([u])\right)  w_h{}_{\vert K_e}.  \nonumber
\end{align}
Proceeding similarly for the second term in \eqref{eq;interface_expansion} yields 
\begin{align}
\lim_{\delta \rightarrow 0 } \int_{F} [w_h \mathcal K_{\delta}^d(\bm \sigma(u))] \cdot \bm n_F  = \int_{F}  \left( C_M \frac{\partial [u]}{\partial t} + f([u])\right)  [w_h] .
\end{align}
We use  the same arguments for $F \subset \Gamma_e $ and note that (in this case) the expression in \eqref{eq:evaluate_limit_appendix} evaluates to zero.  With summing over the faces in $\mathcal{F}_{\Gamma,h} \cup  \Gamma_e$, one obtains \eqref{eq:interface_boundary_terms}. 
\end{document}